\documentclass[12pt]{article}

\usepackage{mathtools}
\usepackage{amsthm}
\usepackage{graphicx}
\usepackage[export]{adjustbox}
\usepackage{xspace}

\usepackage{tikz}
\usepackage{amsfonts}
\usetikzlibrary{arrows.meta}
\usetikzlibrary{arrows}
\usetikzlibrary{decorations.markings}
\usetikzlibrary{math} %needed tikz library

\usepackage{comment}

\newcommand{\Myarrow}{\mathbin{\tikz[baseline] \draw [ -{Straight Barb}] ++(0, 0)  ;}}
\newcommand{\MyarrowS}{\mathbin{\tikz[baseline] \draw [ -{Stealth[open]}] ++(0, 0)  ;}}

\usepackage{xint}
\usepackage{xintfrac}
\usepackage{xinttools}
\usepackage{xintexpr}
\usepackage{etoolbox}
\usepackage{comment}

\usepackage{array}

\usepackage{jabbrv}

\usepackage{cleveref}
\crefname{figure}{Figure}{Figures}
\Crefname{figure}{Figure}{Figures}

\newtheorem{theorem}{Theorem}
\newtheorem{lemma}{Lemma}
\newtheorem{coro}{Corollary}[theorem]
\newtheorem{prop}{Proposition}
\newtheorem{defi}{Definition}
\newtheorem{remark}{Remark}
\newtheorem{claim}{Claim}

\newcommand{\coe}{h}
\newcommand{\cgf}{H}
\newcommand{\cob}{b}
\newcommand{\cdgf}{\mathcal{H}}

\newcommand{\F}{F}
\newcommand{\R}{R}
\newcommand{\G}{G}
\newcommand{\Sum}{S}

\newcommand{\POne}{$\mathcal{P}_1$\xspace}
\newcommand{\PTwo}{$\mathcal{P}_2$\xspace}
\newcommand{\POnem}{\mathcal{P}_1}
\newcommand{\PTwom}{\mathcal{P}_2}
\newcommand{\e}{e}
\newcommand{\bone}{\alpha}
\newcommand{\btwo}{\beta}
\newcommand{\D}{U}

\newcommand{\ca}{\lambda}
\newcommand{\cb}{\eta}

\newcommand{\x}{x}
\newcommand{\y}{y}
\newcommand{\X}{X}
\newcommand{\Y}{Y}

\newcommand{\Go}{Gosper\xspace}
\newcommand{\Ze}{Zeilberger\xspace}
\newcommand{\anti}{antidifference\xspace}
\newcommand{\qu}{quadrant\xspace}
\newcommand{\sumtS}{w}
\newcommand{\Inhom}{g}

% Define the w.r.t. macro
\newcommand{\wrt}{w.r.t.\@\xspace}

\DeclareRobustCommand{\stirling}{\genfrac\{\}{0pt}{}}

\newcolumntype{L}{>{$}l<{$}} % math-mode version of "l" column type
\newcolumntype{C}{>{$}c<{$}} % math-mode version of "c" column type
\newcolumntype{R}{>{$}r<{$}} % math-mode version of "r" column type

\setcounter{tocdepth}{4}
\setcounter{secnumdepth}{4}

\usepackage{microtype}

\begin{document}

\title{Regular structures of an intractable enumeration problem: a diagonal recurrence relation of monomer-polymer coverings on two-dimensional rectangular lattices}

\author{Yong Kong\\
  Department of Biostatistics \\
  School of Public Health \\
  Yale University \\
  New Haven, CT 06520, USA \\
  email: \texttt{yong.kong@yale.edu}
}%

\date{}
\maketitle

\tableofcontents

\begin{abstract}
  In the monomer-polymer model, 
  a linear rigid polymer covers $k$ adjacent lattice sites,
  with no lattice site occupied by more than one polymer.
  The polymers are called $k$-mers,
  and those unoccupied lattice sites are called monomers.
  The well-known monomer-dimer model is a special case of the monomer-polymer model with $k=2$.
  The enumeration of polymer coverings on two-dimensional rectangular lattices is
  considered as ``intractable''.
  We prove that the number of coverings of $s$ polymer satisfies a simple recurrence relation
  $
    \sum_{i=0}^{2s} (-1)^i \binom{2s}{i} a_{n-i, m-i} = 2^s {(2s)!} / {s!}
  $
  on a $n \times m$ rectangular lattice with open boundary conditions
  in both directions.
\end{abstract}

\section{Introduction} \label{S:intro}

The main result of this paper is Theorem~\ref{T:main},
which gives a diagonal recurrence relation for the number of coverings of polymers 
on a $n \times m$ rectangular lattice with open boundary conditions
in both directions.

Enumerations of rigid rodlike molecules
on lattices has been studied by researchers 
in diverse fields of physics, mathematics, and theoretical computer science
for a long time.
In the monomer-polymer model, 
a linear rigid polymer covers $k$ adjacent lattice sites,
with no lattice site occupied by more than one polymer.
The polymers are called $k$-mers,
and those unoccupied lattice sites are called monomers.

Physicists have a long history of studying this model for phase transitions in equilibrium
statistical mechanics~\cite{Onsager1949,Ghosh2007b,Dhar2021,Rodrigues2023}.
When $k=2$, the model becomes the well-known monomer-dimer model,
which is closely related to the Ising model~\cite{Onsager1944,Kasteleyn1963,Fisher1966b,McCoy1973}.
While it has been shown that
the special case of close packed dimer problem
can be solved analytically for any \emph{planar} lattices using the Pfaffian method~\cite{Kasteleyn1963},
no solution has been found
for the general monomer-dimer problem where unoccupied lattice sites (monomers) are allowed,
despite decades of effort (for example, see ~\cite{Tzeng2003}).

This stark dichotomy in the computational complexity of the monomer-dimer problem has captivated researchers in theoretical computer science, particularly those focused on computational complexity theory.
It has been shown that the enumeration of monomer-dimer configurations in \emph{planar} lattices
is \#P-complete~\cite{Jerrum1987},
which indicates the problem is computationally ``intractable''.
For counting problems,
the class \#P plays the same role 
as the more familiar NP class does for decision problems
(such as the well-known satisfiability problem).
The \#P-complete problems are at least as hard as the NP-complete
problems in computational complexity hierarchy.
Currently 
``P versus NP'' problem is perhaps the major
outstanding problem in theoretical computer science.
The prevailing view on the P vs NP problem
is that P is not equal to NP - there are fundamental differences between these two complexity classes,
and the boundaries between ``easy'' and ``intractable'' problems are impenetrable.

Even though the problem of enumeration of monomer-polymer coverings is now considered as
``intractable'', regular structures have been found for the problem.
For example, a simple recurrence relation was found
for the enumeration of polymer coverings
on two-dimensional lattice strips~\cite{Kong2024}.
In this paper we will explore more regular structures in this problem,
and prove a simple \emph{diagonal} recurrence relation for square lattices (Theorem~\ref{T:main}).
Aesthetically and practically, the results on square lattices
are more appealing than the narrow strips.
Various optimization techniques have been used to extend the width
of the lattice strips in the past,
for example, \cite{Kong2006c,Kong2006,Kong2006b,Kong2007,Ghosh2007b},
but still the computational complexity grows exponentially with the
lattice width.
The diagonal recurrence relation,
as proven in this paper,
allows us to investigate previously uncharted regions with larger lattice sizes
that were inaccessible using earlier methods.

The main result is stated below.
Consider a two-dimensional regular lattice with a width of $n$ and a length of $m$,
with open boundary conditions in both directions.
Let $s$ denote the number of $k$-mers in the lattice,
and denote the number of configurations of the $s$ $k$-mers on the lattice
by $a_{n,m}$.
Then $a_{n,m}$ satisfies the following diagonal recurrence relation.

\begin{theorem} \label{T:main}
  The number of configurations of $s$ $k$-mers on a $n \times m$ lattice
  with open boundary conditions in both directions
  satisfies the following recurrence
\begin{equation} \label{E:2s}
  \sum_{i=0}^{2s} (-1)^i \binom{2s}{i} a_{n-i, m-i}
  = 2^s \frac{(2s)!}{s!} ,
\end{equation}
for $n \ge (k+1)s $ and $m \ge (k+1)s $.
\end{theorem}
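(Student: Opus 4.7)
The plan is to interpret the left-hand side of~\eqref{E:2s} as the $(2s)$-th iterated backward difference operator $\nabla^{2s}_{\mathrm{diag}}$ along the diagonal direction, where $\nabla_{\mathrm{diag}} f(n,m) \coloneqq f(n,m) - f(n-1,m-1)$, and then exploit the fact that $a_{n,m}$ is, in the stated range, a polynomial in $n$ and $m$ of total degree exactly $2s$ whose top-degree part is extremely simple. Once this polynomial structure is established, the identity reduces to a standard exercise in the calculus of finite differences.

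The first step is to prove the polynomial representation of $a_{n,m}$ and pin down its leading coefficients. Viewing a configuration as an unordered $s$-set of $k$-mer placements (a placement being a position together with an orientation, horizontal or vertical), I would apply inclusion--exclusion organized by the \emph{interaction graph} $G$ on $[s]$ whose edges mark the pairs of $k$-mers sharing at least one lattice site. For $n, m \ge (k+1)s$, each term in this expansion is a polynomial in $n,m$ of total degree $2 c(G)$, where $c(G)$ is the number of connected components of $G$: each component contributes one factor of degree $2$, corresponding to a single ``root'' placement with $\Theta(nm)$ freedom together with $O(1)$ choices per additional $k$-mer in the component (forced by overlap constraints to sit within $k$ lattice distance of its neighbour). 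Only the edgeless graph has $c(G) = s$; its contribution equals $\tfrac{1}{s!}\bigl[(n-k+1)m + n(m-k+1)\bigr]^s$, whose degree-$2s$ part is the single monomial $\tfrac{2^s}{s!}\, n^s m^s$. All other graphs have $c(G) < s$ and contribute polynomials of total degree $\le 2s - 2$.

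The second step invokes the elementary identity that, for any polynomial $Q(n,m)$ of total degree $\le 2s$, the diagonal restriction $Q(t+n_0,\,t+m_0)$ is a univariate polynomial in $t$ of degree $\le 2s$ whose $t^{2s}$-coefficient equals the sum of the coefficients of all degree-$2s$ monomials of $Q$. Since $\nabla^{2s}$ annihilates polynomials in $t$ of degree $<2s$ and sends $t^{2s}$ to $(2s)!$, one obtains
\[
  \nabla^{2s}_{\mathrm{diag}} \, a_{n,m} \;=\; (2s)! \cdot \bigl(\text{sum of degree-}2s\text{ coefficients of } a_{n,m}\bigr) \;=\; (2s)! \cdot \frac{2^s}{s!},
\]
which is exactly the right-hand side of~\eqref{E:2s}.

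The principal obstacle is the first step: rigorously justifying both the polynomial representation and the degree bound on every nontrivial interaction graph. One must verify that (i) the hypothesis $n, m \ge (k+1)s$ leaves enough room that no boundary artifacts spoil the inclusion--exclusion formula on the diagonal window used by the recurrence, and (ii) every interaction graph with at least one edge genuinely yields a polynomial of total degree $<2s$. The latter hinges on the observation that forcing two $k$-mers to share a specified lattice site pins the second one to finitely many positions relative to the first, thereby eliminating its two-dimensional placement freedom and reducing the number of free parameters in the corresponding inclusion--exclusion term below $2s$.
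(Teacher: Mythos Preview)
Your approach is correct and takes a genuinely different---and far more direct---route than the paper. The paper never argues that $a_{n,m}$ is a polynomial. Instead it builds the diagonal recurrence as an explicit weighted combination of the strip recurrences $e^v_{i,j}$ and $e^h_{i,j}$ from Lemma~\ref{L:strip}, placed at the sites of a $(2s{+}1)\times(2s{+}1)$ block. The weights come in two families: binomial coefficients (set~$\mathcal{P}_1$) and the auxiliary two-index sequence $h_{s,i,j}$ of Section~\ref{S:seq} (set~$\mathcal{P}_2$). The bulk of the proof is a quadrant-by-quadrant verification---carried out with Gosper's and Zeilberger's algorithms on single and double sums---that the off-diagonal coefficients cancel, the diagonal ones yield $2(-1)^i\binom{2s}{i}$, and the right-hand sides of the strip identities sum to $2\cdot 2^s(2s)!/s!$. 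Your finite-difference argument bypasses all of this machinery: once one knows $a_{n,m}$ has bidegree $(s,s)$ with $n^sm^s$-coefficient $2^s/s!$, the identity is a one-line consequence of $\nabla^{2s}t^{2s}=(2s)!$. What the paper's approach buys is that it works entirely at the level of the strip recurrence, without ever asserting or proving a global polynomial representation for $a_{n,m}$; what yours buys is conceptual transparency and brevity.

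Two remarks on tightening your argument. First, your inclusion--exclusion route to the polynomial structure is harder than necessary and may give a range that is off by one (a connected overlap-chain of $s$ $k$-mers has bounding-box width $s(k-1)+1$, forcing $n-2s\ge s(k-1)+1$, i.e.\ $n\ge(k+1)s+1$). You can avoid this entirely by deriving the polynomial structure from Lemma~\ref{L:strip} itself: the relation $\nabla_m^{\,s}a_{n,m}=(2n-k+1)^s$ for $m\ge ks$ shows that, for each fixed $n\ge k$, $a_{n,m}$ agrees with a degree-$s$ polynomial in $m$ on $m\ge (k-1)s$; the symmetric statement in $n$ then forces $a_{n,m}$ to be a bidegree-$(s,s)$ polynomial on $n,m\ge(k-1)s$, with $n^sm^s$-coefficient $2^s/s!$ read off from the leading term of $(2n-k+1)^s/s!$. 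This matches the paper's hypothesis $n,m\ge(k+1)s$ exactly. Second, bidegree $(s,s)$ already guarantees that $n^sm^s$ is the \emph{only} monomial of total degree $2s$, so the ``sum of degree-$2s$ coefficients'' in your second step is just that single coefficient---worth stating explicitly.
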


From this recurrence we can obtain another recurrence directly.
\begin{coro}
\[
  \sum_{i=0}^{2s+1} (-1)^i \binom{2s+1}{i} a_{n-i, m-i} = 0,
\]
for $n \ge (k+1)s + 1 $ and $m \ge (k+1)s + 1 $.
\end{coro}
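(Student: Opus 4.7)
The plan is to derive this identity as a purely formal consequence of Theorem~\ref{T:main}, using Pascal's rule to telescope the length-$(2s+2)$ alternating sum into two length-$(2s+1)$ sums to which the theorem applies directly. There is no need to revisit the combinatorial content; the corollary is essentially the statement that applying one more finite-difference operator to the right-hand side of~\eqref{E:2s} (regarded as a function of $(n,m)$ along the diagonal) annihilates it.

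Concretely, the first step is to write
\[
  \binom{2s+1}{i} \;=\; \binom{2s}{i} + \binom{2s}{i-1}
\]
and split the target sum accordingly. The first piece,
\[
  \sum_{i=0}^{2s+1} (-1)^i \binom{2s}{i} a_{n-i,m-i},
\]
has a vanishing $i=2s+1$ term since $\binom{2s}{2s+1}=0$, so it equals $\sum_{i=0}^{2s}(-1)^i \binom{2s}{i} a_{n-i,m-i}$, which by Theorem~\ref{T:main} is $2^s (2s)!/s!$. For the second piece I reindex by $j=i-1$, observing that the $i=0$ term drops out because $\binom{2s}{-1}=0$; after pulling out a sign, it becomes
\[
  -\sum_{j=0}^{2s} (-1)^j \binom{2s}{j} a_{(n-1)-j,(m-1)-j},
\]
which is the left-hand side of~\eqref{E:2s} evaluated at $(n-1,m-1)$ instead of $(n,m)$, equipped with an overall minus sign.

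The second step is to check that Theorem~\ref{T:main} is legitimately applicable at the shifted point: the hypothesis $n\ge (k+1)s+1$ and $m\ge (k+1)s+1$ of the corollary gives precisely $n-1\ge (k+1)s$ and $m-1\ge (k+1)s$, which is exactly what the theorem requires. Thus the second piece evaluates to $-2^s(2s)!/s!$, and the two contributions cancel.

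I do not anticipate any real obstacle here: the argument is a one-line application of Pascal's rule combined with Theorem~\ref{T:main}. The only thing to be mindful of is to match the range of validity --- the $+1$ in the hypothesis of the corollary is not cosmetic, but is exactly what is needed to invoke the theorem at the shifted diagonal $(n-1,m-1)$.
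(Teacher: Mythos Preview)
Your proof is correct and follows exactly the paper's approach: the paper's proof consists solely of the line ``Use the identity $\binom{2s+1}{i}=\binom{2s}{i}+\binom{2s}{i-1}$,'' which is precisely the Pascal-rule telescoping you carry out in detail. Your explicit check that the hypothesis $n,m\ge (k+1)s+1$ is what is needed to apply Theorem~\ref{T:main} at $(n-1,m-1)$ makes the argument complete.
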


\begin{proof}
  Use the identity
  \[
    \binom{2s+1}{i}
    =
    \binom{2s}{i}
    +
    \binom{2s}{i-1} .
  \]
\end{proof}

The proof of Theorem~\ref{T:main} is based on the following
recurrence relation on a lattice strip~\cite{Kong2024}.
\begin{lemma}[Recurrence on a lattice strip] \label{L:strip}
  For given $k$, $n$ and $s$, the following recursive relation holds
  for a lattice strip with open boundary conditions
  in both directions.
\begin{equation} \label{E:rec1}
 \sum_{i=0}^{s} (-1)^i \binom{s}{i} a_{n, m-i} = c(n, k)^s, \qquad n \ge k \text{ and } m \ge ks,
\end{equation}
where $c(n, k) = 2n-k+1$.
\end{lemma}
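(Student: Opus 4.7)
My plan is to prove \eqref{E:rec1} by establishing that, for fixed $n \ge k$ and $s$, the function $m \mapsto a_{n, m, s}$ agrees on the range $m \ge ks$ with a polynomial in $m$ of degree $s$ whose leading coefficient is $c(n, k)^s / s!$. Because the $s$-th backward difference of a degree-$s$ polynomial equals $s!$ times the leading coefficient, this polynomial characterization is equivalent to the claimed identity. I would argue by induction on $s$.

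For the base case $s = 1$, direct enumeration of single $k$-mer placements gives
\[
  a_{n, m, 1} = n(m - k + 1) + m(n - k + 1) = c(n, k) \, m - n(k - 1),
\]
a linear polynomial in $m$ with leading coefficient $c(n, k) = c(n,k)^1$, which matches the formula.

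For the inductive step, I would partition each configuration by the right-column $r$ of its ``rightmost'' $k$-mer --- the one whose maximum occupied column is largest (with ties broken by a fixed rule on orientation and row). For each $r \in \{k, \ldots, m\}$ there are exactly $c(n, k) = (n - k + 1) + n$ placements of the distinguished $k$-mer: $(n - k + 1)$ vertical ones in column $r$ together with $n$ horizontal ones ending at column $r$. Removing it leaves $s - 1$ $k$-mers on the $n \times (r - 1)$ substrip, subject --- in the horizontal case --- to $k - 1$ prescribed forbidden cells in a single row (columns $r - k + 1, \ldots, r - 1$). By the induction hypothesis, the unconstrained count $a_{n, r-1, s-1}$ is a polynomial in $r$ of degree $s - 1$ with leading coefficient $c(n, k)^{s-1}/(s-1)!$ once $r - 1 \ge k(s - 1)$; a parallel induction on a family of ``decorated'' strip counts with bounded numbers of missing cells extends this to the forbidden-cell variants, with the same leading coefficient. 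Summing the resulting degree-$(s-1)$ contribution over $r$ from $k$ to $m$ produces a polynomial in $m$ of degree $s$ with leading coefficient
\[
  \bigl[(n - k + 1) + n\bigr] \cdot \frac{c(n, k)^{s-1}}{(s-1)! \cdot s} = \frac{c(n, k)^s}{s!};
\]
boundary contributions from small $r$ (where the induction hypothesis does not yet apply) are independent of $m$ and absorbed into the constant term, and configurations in which several $k$-mers share the maximum right-column contribute only lower-order terms in $m$.

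The principal obstacle is the parallel induction on strip counts with forbidden cells: one must strengthen the induction hypothesis to a family of decorated strip counts carrying small prescribed sets of missing cells, and verify that these perturbations do not alter the leading polynomial term in $r$. This controlled-perturbation step is the technical heart of the argument, and the condition $m \ge ks$ is precisely what ensures enough values of $r$ lie in the range where the induction hypothesis applies, so as to pin down the degree-$s$ behavior of $a_{n, m, s}$.
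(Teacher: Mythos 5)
This lemma is not proved in the present paper: it is imported verbatim from the earlier work \cite{Kong2024}, so there is no in-paper argument to compare yours against. Judged on its own terms, your proposal has the right high-level reduction (the identity is equivalent to $a_{n,m,s}$ agreeing with a degree-$s$ polynomial in $m$ with leading coefficient $c(n,k)^s/s!$ on a suitable range) and a correct base case, but it has genuine gaps. First, a bookkeeping error in the reduction itself: the $s$-th backward difference at $m$ uses the values $a_{n,m-s,s},\dots,a_{n,m,s}$, so polynomial agreement on $m \ge ks$ only yields the identity for $m \ge (k+1)s$; to get it for all $m \ge ks$ you need polynomial agreement down to $m \ge (k-1)s$, and your induction would have to track that sharper threshold through every step.

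Second, and more seriously, the identity is exact, but your argument only controls the leading term. The ``rightmost $k$-mer'' decomposition is not clean: with the tie-breaking rule, the remaining $s-1$ $k$-mers are not confined to the $n\times(r-1)$ substrip --- other $k$-mers may also touch column $r$ --- so the count for fixed $r$ is not $c(n,k)$ times a decorated $(s-1)$-mer count plus something negligible; it is that plus a genuine correction term. Dismissing the ties and the decorated-strip discrepancies as ``lower-order in $m$'' suffices to identify the leading coefficient, i.e.\ to prove $a_{n,m,s} = c(n,k)^s m^s/s! + O(m^{s-1})$, but the $s$-th difference annihilates the leading term and the claimed identity then lives entirely in those corrections. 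You would need every correction --- the tie configurations, the forbidden-cell counts including all their lower-order coefficients, and the small-$r$ boundary terms --- to be \emph{exactly} polynomial in $m$ of degree at most $s-1$ on the stated range, which requires a substantially strengthened induction hypothesis (exact eventual polynomiality, with explicit thresholds, for a whole family of decorated strip counts) that the proposal names as ``the technical heart'' but does not supply, and does not even formulate for the tie terms. As written, the argument proves an asymptotic statement, not the identity.
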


\begin{remark}
  The $c(n, k)$ on rhs of Eq.~\eqref{E:rec1}
  is a constant that depends on the boundary conditions as well as $n$ and $k$,
  but not $m$ or $s$.
  For different boundary conditions $c(n, k)$ can take different
  values~\cite{Kong2024}.
  Note also that Eq.~\eqref{E:rec1} holds when $n < k$, but with a different value of
  $c(n, k)$~\cite{Kong2024}.
  In this paper we only need the conditions stated in Lemma~\ref{L:strip}.
\end{remark}

\begin{remark}
Note that the rhs of Eq.~\eqref{E:2s} depends only on $s$, the number of
$k$-mers. It does not depend on $k$, $n$, or $m$.
This is in contrast to the recurrence of lattice strips (Lemma~\ref{L:strip}),
where the constant on the rhs  depends on $s$ as well as $n$, the width of the lattice strip.
\end{remark}

The paper is organized as follows.
Since the proof of Theorem~\ref{T:main} relies crucially on a two-dimensional sequence, Section~\ref{S:seq} introduces this sequence and establishes the properties needed in the subsequent analysis.
Section~\ref{S:proof}, which constitutes the main part of the paper, is devoted to the proof of Theorem~\ref{T:main}.
The proof requires the evaluation of numerous single and double summations, for which the principal tools are the \Go-algorithm and the \Ze-algorithm.
These algorithms are summarized in Appendix~\ref{S:A_GZ}.
For the convenience of readers unfamiliar with these techniques, the appendix provides the necessary background material together with extensions for handling double sums with nonstandard boundary conditions.

A toy example for $s=4$ is given in section~\ref{SS:toy} with explicit calculations.

Throughout the paper 
we use $k$ for the length of the polymer (the $k$-mer),
$n$ and $m$ for the width and length of the lattice, respectively,
and $s$ for the number of the polymers.
The exceptions are in Appendix~\ref{S:A_GZ} on \Go{} and \Ze{} algorithms,
where $k$ is conventional used as the summation index,
and $n$ as the summation parameter.

\section{A two-dimensional sequence} \label{S:seq}

  The proof of the main theorem depends critically on a two-dimensional sequence,
  which acts as the coefficients for the recurrence identities that are
  added to specified lattice sites inside the square of size $(2s+1) \times (2s+1)$.
  This sequence will play an important role in our analysis.
  
  As the discussions below and its uses in the following proofs show,
  this sequence
  in several aspects looks like the sequence of binomial coefficients,
  but with an extra parameter $s$, the number of $k$-mers covering the lattice.
  It plays for set \PTwo\ a role analogous to that played by the binomial coefficients for set \POne.

  Below we first give a recursive definition of the sequence,
  then derive an explicit formula for the ordinary generating function of the sequence,
  from which the explicit expression of the sequence
  is obtained.

  The explicit expressions for the sequence
  have \emph{three terms}.
  As the proof of the main theorem shows,
  each term plays a distinct role
  in building up or canceling out the coefficients
  of the recurrence identities added to each position in the
  lattice square.

  In Appendix~\ref{A:seq} we show some additional interesting properties of this two-dimensional sequence.
  There we derive the double generating function of the sequence,
  and show that each element of the sequence is an integer.

\begin{defi}
  The coefficient sequence  $\coe_{s,i,j}$ is defined recursively as
\begin{equation} \label{E:S_def}
  \coe_{s,i,j} =
  \begin{dcases}
    \binom{s+j-1}{j} \frac{s-j}{s},  & i = 0, \\
    - \frac{s-j+1}{i} \coe_{s, i-1, j-1} - \frac{j-i}{i}  \coe_{s, i-1, j}  , & i > 0 , \\
    0,  & j \le i. 
  \end{dcases}  
\end{equation}
\end{defi}

As examples, 
the sequence for $s=4$ is shown in Table~\ref{T:s4},
and the sequence for $s=5$ is shown in Table~\ref{T:s5}.

\begin{table}[th]
  \centering
  \caption{
    The sequence of $\coe_{4,i,j}$.
    \label{T:s4}}
  \begin{tabular}{C|RRRR}
    \hline\hline
    i \backslash j & 1 & 2 & 3 & 4\\
    \hline
  0 &  3 & 5 & 5 & 0 \\
  1 &   &-14 & -20 & -5 \\
  2 &   &    & 24 & 15  \\
    3 &   &    &    & -13 \\
    \hline
  \end{tabular}
\end{table}

\begin{table}[th]
  \centering
  \caption{
    The sequence of $\coe_{5,i,j}$.
    \label{T:s5}}
  \begin{tabular}{C|RRRRR}
    \hline\hline
    i \backslash j & 1 & 2 & 3 & 4 & 5\\
    \hline
  0 &  4 &   9 &  14 &  14  &  0   \\
  1 &    & -25 & -55 & -70  & -14  \\
  2 &    &     &  65 & 125  &  56  \\
  3 &    &     &    &  -85  & -79  \\
  4 &    &     &    &       &  41  \\
    \hline
  \end{tabular}
\end{table}

Define the ordinary generating function for the sequence as
\[
  \cgf_{s, i} =  \sum_{j=i+1} \coe_{s, i, j} x^j .
\]
We have the following recursive relation for the generating function $\cgf_{s, i}$,
where $\cgf'$ is the derivative of $\cgf$ \wrt $x$.
Since $s$ is fixed, below we omit $s$ in $\cgf_{s,i}$ and $\coe_{s,i,j}$.
\begin{lemma} [Recursive relation of the generating function] \label{L:S_gf_recur}
  For $i>1$,
  \begin{equation} \label{E:S_gf_recur}
   \cgf_{i}  = -\left( \frac{sx}{i} -1 \right) \cgf_{i-1} + \frac{x(x-1)}{i} \cgf_{i-1}' .
 \end{equation}
\end{lemma}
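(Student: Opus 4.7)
The plan is to derive \eqref{E:S_gf_recur} as a direct formal consequence of the three-term recursion for $\coe_{i,j}$ in \eqref{E:S_def}: multiply the $i>0$ branch by $x^j$, sum over $j$, and identify the pieces of the right-hand side as $\cgf_{i-1}$ and $x\cgf'_{i-1}$.

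Concretely, I would start from
\[
  \cgf_i \;=\; \sum_{j\ge i+1}\coe_{i,j}\,x^j
  \;=\; -\frac{1}{i}\sum_{j\ge i+1}(s-j+1)\,\coe_{i-1,j-1}\,x^j
         \;-\;\frac{1}{i}\sum_{j\ge i+1}(j-i)\,\coe_{i-1,j}\,x^j,
\]
and treat the two sums separately. In the first sum I would substitute $j\mapsto j+1$ to rewrite it as $x\sum_{j\ge i}(s-j)\,\coe_{i-1,j}\,x^{j}$, which splits into $sx\,\cgf_{i-1}-x\sum_{j\ge i}j\,\coe_{i-1,j}\,x^j = sx\,\cgf_{i-1}-x^2\,\cgf'_{i-1}$, using that $\cgf'_{i-1}=\sum j\,\coe_{i-1,j}\,x^{j-1}$. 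In the second sum the $j=i$ term of $\coe_{i-1,j}$ is automatically killed by the factor $(j-i)$, so the range extends to $j\ge i$ and gives $x\,\cgf'_{i-1}-i\,\cgf_{i-1}$.

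Assembling the pieces yields
\[
  \cgf_i \;=\; -\frac{1}{i}\bigl(sx\,\cgf_{i-1}-x^2\cgf'_{i-1}\bigr)
               -\frac{1}{i}\bigl(x\,\cgf'_{i-1}-i\,\cgf_{i-1}\bigr),
\]
and collecting the coefficients of $\cgf_{i-1}$ and $\cgf'_{i-1}$ gives exactly \eqref{E:S_gf_recur}. The only step that really requires care is the index bookkeeping against the support convention $\coe_{i',j}=0$ for $j\le i'$: after the shift $j\mapsto j+1$ the first sum picks up a $j=i$ term that is genuinely present in $\cgf_{i-1}$, while in the second sum the $j=i$ term vanishes by the prefactor, so in both cases the natural range for $\cgf_{i-1}$ is recovered without any leftover boundary contributions. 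I expect this matching of ranges to be the only place where a mistake could creep in; everything else is routine algebra.
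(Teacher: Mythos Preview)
Your proposal is correct and follows essentially the same approach as the paper's own proof: substitute the recursion \eqref{E:S_def} into the definition of $\cgf_i$, shift the index in the first sum, and identify the resulting pieces with $\cgf_{i-1}$ and $x\cgf'_{i-1}$. The only cosmetic difference is that the paper splits $(s-j+1)=(s+1)-j$ before shifting, whereas you shift first and then split $(s-j)=s-j$; the index bookkeeping against the support convention is handled identically in both arguments.
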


\begin{proof}
  This can be proved directly from the definition of the sequence (Eq.~\eqref{E:S_def}):
\begin{align*}
  \cgf_{i} &=\sum_{j=i+1} \coe_{i, j} x^j \\
           &= -\sum_{j=i+1} \left[ \frac{s-j+1}{i} \coe_{i-1, j-1} + \frac{j-i}{i} \coe_{i-1, j}  \right] x^j  \\
           &= -\frac{(s+1)x}{i} \sum_{j=i} \coe_{i-1, j} x^{j} +  \frac{x}{i} \sum_{j=i} (j+1) \coe_{0, j} x^{j}
             - \frac{1}{i}\sum_{j=i} (j-i)\coe_{i-1, j}  x^j \\
           &= -\frac{(s+1)x}{i} \cgf_{i-1} +  \frac{x}{i} \left[ x \cgf_{i-1}' + \cgf_{i-1} \right]
             - \frac{1}{i} \left[ x \cgf_{i-1}' - i \cgf_{i-1} \right] \\
  &= -\left( \frac{sx}{i} -1 \right) \cgf_{i-1} + \frac{x(x-1)}{i} \cgf_{i-1}' .
\end{align*}
\end{proof}

From this recursive relation of the generating function,
we obtain the explicit expression
for $\cgf_{i}$, containing three terms: 
\begin{prop}[Explicit formula for $\cgf_{i}$] \label{P:S_gf}
  The explicit formula for $\cgf_{i}$ is
  \begin{align} \label{E:H}
    \cgf_{i} &=
               \sum_{j=0}^i (-1)^{j+1}  \binom{s}{j} x^j \nonumber \\
             & + (-1)^{i+1}  \binom{2s}{ i}  \frac{ x^{i+1} }{ (1-x)^{s+1} } \nonumber \\
             & + (-1)^{i} (2s-i) \binom{2s}{i}
               \sum_{t=0}^{i} (-1)^t \binom{i}{t}
               \frac{ 1 } {2s - t}
               \frac{1}{ (1-x)^{s-t}} .
  \end{align}
\end{prop}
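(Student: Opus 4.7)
The plan is to prove this by induction on $i$, using Lemma~\ref{L:S_gf_recur} as the recurrence driving the induction. For the base case $i=0$, the proposed formula specializes to $\cgf_0 = -1 + \frac{1}{(1-x)^s} - \frac{x}{(1-x)^{s+1}}$. Writing $\frac{1}{(1-x)^s} = \sum_{j \ge 0} \binom{s+j-1}{j} x^j$ and shifting the index in $\frac{x}{(1-x)^{s+1}} = \sum_{j \ge 1} \binom{s+j-1}{j-1} x^j$, the coefficient of $x^j$ in the difference becomes $\binom{s+j-1}{j} - \binom{s+j-1}{j-1} = \binom{s+j-1}{j}(1 - j/s) = \binom{s+j-1}{j} \frac{s-j}{s}$, which matches the definition $\coe_{0,j}$, so the base case follows directly once the constant $-1$ absorbs the $j=0$ term.

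For the inductive step, denote the three summands of the claimed formula for $\cgf_i$ by $A_i$, $B_i$, $C_i$ (in order), and introduce the linear operator $L[\cgf] := -\bigl(\tfrac{sx}{i} - 1\bigr)\cgf + \tfrac{x(x-1)}{i}\cgf'$, so the lemma says $\cgf_i = L[\cgf_{i-1}]$. The key observation is that the three summands are naturally separated by their pole structure at $x=1$: $A$ is polynomial, $B$ has a pole of order exactly $s+1$, and $C$ is a combination of poles of orders $s, s-1, \ldots, s-i$. Because $L$ multiplies by $x(x-1)$ before differentiating, one factor of $(1-x)$ is killed whenever the derivative raises the pole order, so $L$ preserves this decomposition. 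I plan to show $L[A_{i-1}] = A_i$, $L[B_{i-1}] = B_i$, and $L[C_{i-1}] = C_i$ separately; by linearity this gives the result.

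The $A$ case reduces to computing $L[x^j] = \bigl[(i-j)x^j + (j-s)x^{j+1}\bigr]/i$, summing with signs $(-1)^{j+1}\binom{s}{j}$, reindexing, and combining adjacent terms via the identity $\binom{s}{j-1}(s-j+1) = j\binom{s}{j}$. The $B$ case is a one-line calculation: a direct evaluation of $L\bigl[x^i/(1-x)^{s+1}\bigr]$ collapses to $-(2s-i+1)x^{i+1}/(i(1-x)^{s+1})$, and then $\binom{2s}{i-1}(2s-i+1)/i = \binom{2s}{i}$ finishes it. I expect the $C$ case to be the main obstacle, since it requires matching a sum term by term. Setting $g_t := 1/(1-x)^{s-t}$, a short computation gives $L[g_t] = \tfrac{i-2s+t}{i}g_t + \tfrac{2s-t}{i}g_{t+1}$, so $L[C_{i-1}]$ is a linear combination of $g_0,\ldots,g_i$. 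Collecting the coefficient of $g_t$ produces $(-1)^t$ times
\[
\binom{i-1}{t}\frac{2s-t-i}{2s-t} + \binom{i-1}{t-1},
\]
and the plan is to simplify this to $\binom{i}{t}\frac{2s-i}{2s-t}$ using Pascal's identity $\binom{i}{t} = \binom{i-1}{t} + \binom{i-1}{t-1}$ together with $i\binom{i-1}{t} = (i-t)\binom{i}{t}$, with the overall prefactor adjusted by $(2s-i+1)\binom{2s}{i-1} = i\binom{2s}{i}$. The boundary cases $t = 0$ and $t = i$ drop a summand but yield the same identity, so the match holds uniformly.
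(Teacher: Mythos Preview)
Your proposal is correct and follows essentially the same approach as the paper: both prove the base case $i=0$ directly and then induct via Lemma~\ref{L:S_gf_recur}, splitting the claimed formula into the same three summands and verifying that each one separately satisfies the recurrence. The only notable difference is in the handling of the third term: the paper packages the two sides as summands $F_1,F_2$ and checks $\sum_t(F_1-F_2)$ telescopes via a Gosper antidifference, whereas you match the coefficient of each $g_t=1/(1-x)^{s-t}$ directly and close the gap with Pascal's identity and $i\binom{i-1}{t}=(i-t)\binom{i}{t}$; your route is a bit more elementary and avoids invoking Gosper's algorithm, at the cost of having to track the boundary cases $t=0,i$ by hand.
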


\begin{proof}
  The formula can be proved by induction on $i$ using the recurrence Eq.~\eqref{E:S_gf_recur}.
  When $i=0$,
  the coefficient $\coe_{0, j}$ can be written as two parts,
  \[
    \coe_{0, j} = \binom{s+j-1}{j}  - \frac{j}{s}  \binom{s+j-1}{j}.
  \]
  Since
  \[
    \sum_{j=1} \binom{s+j-1}{j} x^j = -1 + \frac{1}{(1-x)^s},
  \]
  the generating function $\cgf_{0}$ can be calculated as
  \begin{align*}
    \cgf_{0} &= -1 + \frac{1}{(1-x)^s} - \frac{1}{s} x D_x \left[  \frac{1}{(1-x)^s} \right] \\
    &= -1 - \frac{1}{(1-x)^{s+1}} + \frac{1}{(1-x)^{s}},
  \end{align*}
  which agrees with Eq.~\eqref{E:H} term by term.
  Here $D_x$ is the differentiation operator $d/dx$.

  For $\cgf_{i+1}$, we consider the three terms separately
  and prove that each of them satisfies the recurrence of Eq.~\eqref{E:S_gf_recur}.
  
  For the first term of $\cgf_{i+1}$,  the recursive relation of Eq.~\eqref{E:S_gf_recur}
  gives
  \begin{align}
    &
                       -\left( \frac{sx}{i+1} -1 \right) \cgf^{(1)}_{i}
                       + \frac{x(x-1)}{i+1} \cgf^{(1)\prime}_{i}   \nonumber \\
    =&  -\left( \frac{sx}{i+1} -1 \right) \sum_{j=0}^i (-1)^{j+1}  \binom{s}{j} x^j
       + \frac{x-1}{i+1} \sum_{j=0}^i (-1)^{j+1} j \binom{s}{j} x^{j}  .  \label{E:seq_h1}
  \end{align}
  The second term in Eq.~\eqref{E:seq_h1} can be transformed to
  \begin{align*}
    & \frac{x}{i+1}  \sum_{j=0}^i (-1)^{j+1} j \binom{s}{j} x^{j}
      - \frac{x}{i+1} \sum_{j=0}^i (-1)^{j+1} j \binom{s}{j} x^{j-1} \\
    =& \frac{x}{i+1}  \sum_{j=0}^i (-1)^{j+1} j \binom{s}{j} x^{j}
       + \frac{x}{i+1} \sum_{j=0}^{i-1} (-1)^{j+1} (j+1) \binom{s}{j+1} x^{j} \\
    =& \frac{s x}{i+1} \sum_{j=0}^i (-1)^{j+1} \binom{s}{j} x^{j}
       + (-1)^i \binom{s}{i+1} x^{i+1} ,
  \end{align*}
  where the identity $j \binom{s}{j} + (j+1)\binom{s}{j+1} = s \binom{s}{j} $
  is used in the second step to combine the two sums.
  The above expression, when combined with the first term of Eq.~\eqref{E:seq_h1},
  shows it equals to $\cgf^{(1)}_{i+1}$
  \[
    \cgf^{(1)}_{i+1} = \sum_{j=0}^{i+1} (-1)^{j+1}  \binom{s}{j} x^j .
  \]

  For the second term of $\cgf_{i+1}$,
  after factoring out common factors,
  \begin{align*}
    &
      -\left( \frac{sx}{i+1} -1 \right) \cgf^{(2)}_{i}
      + \frac{x(x-1)}{i+1} \cgf^{(2)\prime}_{i}   \nonumber \\  
    =&
       (-1)^{i+1}  \binom{2s}{ i}  \frac{ x^{i} }{ (1-x)^{s+1} }
       \left[
       \left(
       \frac{sx}{i+1} -1
       \right)
       x
       + \frac{x(x-1)}{i+1}
       \left(
       i+1 + \frac{(s+1)x}{1-x}
       \right)
       \right]\\
    =& (-1)^{i+2} \binom{2s}{ i+1 }  \frac{ x^{i+2} }{ (1-x)^{s+1} },
  \end{align*}
  which equals to $\cgf^{(2)}_{i+1}$ .

  For the third term of $\cgf_{i+1}$,
  consider the following two equations:
  \begin{align}
     &
      -\left( \frac{sx}{i+1} -1 \right) \cgf^{(3)}_{i}
       + \frac{x(x-1)}{i+1} \cgf^{(3)\prime}_{i} = \sum_{t=0}^{i} F_1  , \label{E:seq_h3_F1} \\
     & \begin{multlined}
       (-1)^{i+1} (2s-i-1) \binom{2s}{i+1}
               \sum_{t=0}^{i+1} (-1)^t \binom{i+1}{t}
               \frac{ 1 } {2s - t}
       \frac{1}{ (1-x)^{s-t}}
       =
       \sum_{t=0}^{i} F_2 \\
       +
       \binom{2s} {i+1}
       \frac{1} { (1-x)^{s-i-1} } ,
       \end{multlined}
       \label{E:seq_h3_F2}
  \end{align}
  where the summands are
  \begin{align*}
    F_1 &= 
          (-1)^{i+t}   (2s-i)
          \binom{2s}{ i}
          \binom{i} {t}
          \frac{1} {2s-t}
          \frac{1} {(1-x)^{s-t} }
          \left[
          -\left(
          \frac{s x} {i+1} - 1
          \right)
          - \frac{x (s-t) }{i+1}
          \right], \\
    F_2 &=
          (-1)^{i+t+1}
          (2s-i-1)
          \binom{2s}{ i+1}
          \binom{i+1} {t}
          \frac{1} {2s-t}
          \frac{1} {(1-x)^{s-t} } .
          \end{align*}

          The sum with summand $F_1$ in Eq.~\eqref{E:seq_h3_F1} is the result of substituting 
          $\cgf^{(3)}_{i}$ into the recurrence Eq.~\eqref{E:S_gf_recur}.
          The left side of Eq.~\eqref{E:seq_h3_F2}
          is in the form of $\cgf^{(3)}_{i+1}$
          according to Eq.~\eqref{E:H},
          and the the right side is 
          its rearrangement.
          We will demonstrate that the right sides of these two equations are equal,
          thereby proving the correctness of $\cgf^{(3)}_{i+1}$.

          The difference of $F_1$ and $F_2$
          is \Go-summable (see Appendix~\ref{S:A_GZ}),
          with the \anti
          \[
            (-1)^{i+t+1}
            \binom{2s} {i+1}
            \binom{i} {t-1}
            \frac{1} {(1-x)^{s-t} } .
          \]
          Hence the sum of $F_1 - F_2$ is
          \[
            \sum_{t=0}^{i} (F_1 - F_2) =
            \binom{2s} {i+1}
            \frac{1} { (1-x)^{s-i-1} },
          \]
          which is the second term in Eq.~\eqref{E:seq_h3_F2}.
          This shows that the right sides of Eq.~\eqref{E:seq_h3_F1} and Eq.~\eqref{E:seq_h3_F2}
          are equal.
\end{proof}

From the explicit expression of the generating function Eq.~\eqref{E:H},
the explicit expression of the sequence can be obtained
by extracting the coefficient of $x$ from $\cgf_i(x)$.
\begin{coro}[Explicit formula for $\coe_{i, j}$] \label{C:hij}
  For $i \ge 0$,
  \begin{align} \label{E:hv2}
    \coe_{i,j} &= 
                 (-1)^{j+1}  \binom{s}{j} [j \le i] \\
               & + (-1)^{i+1} \binom{2s}{i} \binom{s+j-i-1}{s}                    \notag \\
               & + (-1)^{i} (2s-i) \binom{2s}{i} \sum_{t=0}^{i} (-1)^t \frac{1}{2s - t}
                 \binom{i}{t}  \binom{s-t-1+j}{ j},  \notag 
  \end{align}  
  where $[\cdot]$ in the first term is Iverson's convention:
  $[\ell]=1$ if $\ell$ is true, otherwise  $[\ell]=0$~\cite{Graham1994}.
This term is to make $\coe_{i,j} = 0$ 
for $j \le i$.
\end{coro}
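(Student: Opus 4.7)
The plan is to extract the coefficient of $x^j$ from the closed-form expression for $\cgf_i$ in Proposition~\ref{P:S_gf} and read off the three resulting terms. Since $\cgf_i$ is written as a sum of a polynomial plus two rational functions of the form $(1-x)^{-r}$, the coefficient extraction reduces to the standard negative binomial identity
\[
\frac{1}{(1-x)^{r}} = \sum_{k\ge 0} \binom{r-1+k}{k}\, x^k, \qquad r\ge 1.
\]

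First I would handle the polynomial piece $\sum_{j=0}^{i}(-1)^{j+1}\binom{s}{j}x^j$: its coefficient of $x^j$ is exactly $(-1)^{j+1}\binom{s}{j}$ when $j\le i$ and $0$ otherwise, which is precisely $(-1)^{j+1}\binom{s}{j}[j\le i]$, matching the first summand of Eq.~\eqref{E:hv2}. Next I would treat the second term $(-1)^{i+1}\binom{2s}{i}\,x^{i+1}/(1-x)^{s+1}$; factoring the monomial $x^{i+1}$ into a shift of the exponent and applying the negative binomial identity with $r=s+1$ produces a coefficient of $x^j$ equal to $(-1)^{i+1}\binom{2s}{i}\binom{s+j-i-1}{j-i-1}$, which by the symmetry $\binom{a}{b}=\binom{a}{a-b}$ equals the second summand $(-1)^{i+1}\binom{2s}{i}\binom{s+j-i-1}{s}$ in Eq.~\eqref{E:hv2}. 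Finally, the third term is already written as a linear combination of $(1-x)^{-(s-t)}$, so I would apply the same identity with $r=s-t$ termwise in $t$, which yields $\binom{s-t-1+j}{j}$ and reproduces the third summand verbatim.

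The only subtlety is bookkeeping at the boundary $j\le i$: the second and third terms, taken together, must cancel the first polynomial piece so that the full sum $\cgf_i=\sum_{j\ge i+1}\coe_{i,j}x^j$ has no low-order terms. I would note that this is automatic from the identity being proved, since Proposition~\ref{P:S_gf} has already established the equality of $\cgf_i$ with the three-term expression, so the coefficient of $x^j$ on the two sides must agree for every $j\ge 0$; the Iverson bracket $[j\le i]$ is then simply the honest coefficient extracted from the polynomial piece, and the vanishing of $\coe_{i,j}$ for $j\le i$ emerges as a byproduct rather than as an obstacle.

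There is essentially no hard step: the proof is a direct coefficient extraction and does not require any combinatorial identity beyond the negative binomial series. The main care required is ensuring the index shift in the second term is handled correctly (so that $\binom{s+j-i-1}{s}$, rather than $\binom{s+j-i-1}{j}$, appears) and that the convention $\binom{n}{k}=0$ for $0\le n<k$ is used consistently so that the written formula remains valid for all $j\ge 0$.
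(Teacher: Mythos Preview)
Your proposal is correct and follows exactly the approach the paper intends: the paper states only that the formula is ``obtained by extracting the coefficient of $x$ from $\cgf_i(x)$,'' and you have supplied precisely that extraction, term by term, using the negative binomial series. Your handling of the index shift in the second term and the remark that the vanishing for $j\le i$ is automatic from Proposition~\ref{P:S_gf} are both sound; there is nothing to add.
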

We denote the three terms of $\coe_{i,j}$ by $\coe_{i,j}^{(1)}$, $\coe_{i,j}^{(2)}$, and $\coe_{i,j}^{(3)}$:
\begin{align*}
  \coe_{i,j}^{(1)} &= (-1)^{j+1}  \binom{s}{j} [j \le i], \\
  \coe_{i,j}^{(2)} &=  (-1)^{i+1} \binom{2s}{i} \binom{s+j-i-1}{s}, \\
  \coe_{i,j}^{(3)} &= (-1)^{i} (2s-i) \binom{2s}{i} \sum_{t=0}^{i} (-1)^t \frac{1}{2s - t}
                 \binom{i}{t}  \binom{s-t-1+j}{ j}.
\end{align*}

\section{Preliminaries} \label{S:pre}

\subsection{Outline of the proof}

The two main ingredients of the proof are the recurrence identities
on the \emph{lattice strips},
where the recurrences are on one of the indices of $a_{n,m}$ (Lemma~\ref{L:strip})~\cite{Kong2024},
and the sequence $\coe_{i, j}$ discussed in the previous section (Eq.~\eqref{E:hv2}
of Corollary~\ref{C:hij}).

Consider a $(2s+1) \times (2s+1)$ square inside the two-dimensional lattice, with
the top-right coordinate as $(n, m)$.
To clarify the terminology,
we call the diagonal from lower-left corner to the upper-right corner
as the \emph{main diagonal},
or simply \emph{diagonal},
instead of anti-diagonal in the context of matrices.
Inside this square,
by choosing appropriate weights,
we add the recurrence identities of the lattice strips (Eq.~\eqref{E:rec1}),
in both horizontal and vertical directions symmetrically.
After summing up these identities,
along the diagonal of this square
the coefficients of $a_{n-i, m-i}$ will be twice that given in
Theorem~\ref{T:main},
while at the off-diagonal positions
the coefficients of $a_{n-i, m-j}$ will vanish at each lattice site for $i \ne j$.
Meanwhile, the sum of the right hand sides of these recurrences
gives twice that of the right hand of Eq.~\eqref{E:2s}.

Since the lattice have free boundary conditions in both directions,
the recurrence of the lattice strips of Lemma~\ref{L:strip}
applies to both horizontal and vertical directions.
For the vertical direction, the recurrence is
  \begin{equation} \label{E:rec_v}
   \e^v_{n,m} :=  \sum_{j=0}^{s} (-1)^j \binom{s}{j} a_{n, m-j} = c(n, k)^s, \quad m \ge ks,
  \end{equation}
and for the horizontal direction, the recurrence is
  \begin{equation} \label{E:rec_h}
   \e^h_{n,m} := \sum_{i=0}^{s} (-1)^i \binom{s}{i} a_{n-i, m} = c(m, k)^s, \quad n \ge ks.
  \end{equation}

The top-right corner of the square that we are focusing on has the coordinates $(n, m)$.
In general, lattice sites inside the $(2s+1) \times (2s+1)$ square have coordinates in the form
of $(n-i, m-j)$, for $0 \le i, j \le 2s$.
Below to save space we abbreviate the coordinates as $(i, j)$ when there is no ambiguity.
Hence the top-right corner is labeled as $(0,0)$,
and the bottom-left corner $(2s, 2s)$.

We use the largest coordinates (top right) in the recurrence relations Eq.~\eqref{E:rec_v}
and Eq.~\eqref{E:rec_h} to label the identities,
and by using abbreviation just mentioned,
at the lattice site $(i, j)$
the recurrence relations are
\begin{align*}
  \e^v_{i,j} &:=  \sum_{j'=0}^{s} (-1)^{j\prime} \binom{s}{j'} a_{n-i, m-j-j'} = c(n-i, k)^s , \\
  \e^h_{i,j} &:= \sum_{i'=0}^{s} (-1)^{i\prime} \binom{s}{i'} a_{n-i-i', m-j} = c(m-j, k)^s  .         
\end{align*}

To facilitate the discussions, we divide the necessary recurrence identities of the lattice strips
used in the proof into two sets,
\POne and \PTwo,
and denote the corresponding weights at lattice site $(i,j)$
by $\cob_{i,j}$ and
$\bar{\coe}_{i,j}$, respectively.
The details of \POne and \PTwo are discussed in the following subsections.
When the recurrence identities in \POne or \PTwo are summed up,
they introduce terms such as $a_{n-i, m-j}$.
We use $\bone_{i,j}$ and $\btwo_{i,j}$ to denote the coefficient of these terms
at position $(i, j)$.
\begin{align*}
  \bone_{i,j} &= \text{coefficient of $a_{n-i, m-j}$} \, \text{in}
                \left(
                \sum_{\e^v_{i',j'}  \in \POnem} \cob_{i',j'} \e^v_{i',j'}
                +
                \sum_{\e^h_{i',j'} \in \POnem} \cob_{i',j'} \e^h_{i',j'}
                \right), \\
  \btwo_{i,j} &= \text{coefficient of $a_{n-i, m-j}$} \, \text{in}
                \left(
                \sum_{\e^v_{i',j'}  \in \PTwom} \bar{\coe}_{i',j'} \e^v_{i',j'}
                +
                \sum_{\e^h_{i',j'}  \in \PTwom} \bar{\coe}_{i',j'} \e^h_{i',j'}
                \right),
\end{align*}
where $\bar{\coe}_{i,j}$ is related to the two-dimensional sequence discussed
in section~\ref{S:seq} (see below).

Our goals are to prove, for the left hand side of all these added up identities
of lattice strips,
\begin{align} \label{E:goal_lhs}
  \begin{aligned}
  \bone_{i,i} + \btwo_{i,i} &= (-1)^i \binom{2s}{i} \times 2,\\
  \bone_{i,j} + \btwo_{i,j} &= 0, \quad i \ne j,
  \end{aligned}
\end{align}
and for the right hand side,
\begin{equation} \label{E:goal_rhs}
  \sum_{\e^v_{i',j'}  \in \POnem} \cob_{i',j'} c(n-i', k)^s
  +
  \sum_{\e^v_{i',j'}  \in \PTwom} \bar{\coe}_{i',j'} c(n-i', k)^s
  = 2^s \frac{(2s)!}{s!}.
\end{equation}
% check this x 2? Not in the original - should be correct: only vertical here.

\subsection{The two sets of recurrence identities: \POne and \PTwo} \label{SS:P1P2}

In this section we describe how to add the lattice strip recurrences
in the vertical and horizontal directions with appropriate coefficients,
so that the terms in the diagonal have the correct values of the main theorem,
while the left side of the off-diagonal terms vanish (Eq.~\eqref{E:goal_lhs}).

We introduce two sets of recurrence identities, \POne and \PTwo.
For the recurrence identities in the set \POne,
binomial coefficients are used as weights,
while for the set \PTwo the two-dimensional sequence introduced above in section~\ref{S:seq}
is used as weights.

We will prove the main result quadrant by quadrant
inside the $(2s + 1) \times (2s + 1)$ square.
The coordinate of the center lattice site is $(s, s)$.
The four quadrants are naturally defined by
their four corners (counterclockwise):
\begin{align*}
  Q_1 &= \{(s, s), (0, s), (0, 0), (s, 0)\},     \quad \text{top right},  \\
  Q_2 &= \{(s, s), (s, 0), (2s, 0), (2s, s)\},   \quad \text{top left},   \\
  Q_3 &= \{(s, s), (2s, s), (2s, 2s), (s, 2s)\}, \quad \text{bottom left}, \\
  Q_4 &= \{(s, s), (s, 2s), (0, 2s), (0, s)\},   \quad \text{bottom right}.
\end{align*}
  
To clarify the discussion, Figures~\ref{F:S1} and \ref{F:S2} show
how the recurrences on a strip (Eqs.~\eqref{E:rec_v} and ~\eqref{E:rec_h})
  are added at each lattice site for \POne and \PTwo respectively with $s=4$.
  Similarly,  Figures~\ref{F:S1s5} and \ref{F:S2s5} for $s=5$.
  The location of the arrowhead indicates the highest index in the recurrence (top right),
  %the largest coordinates (top right) of the recurrences,
  while direction of the arrowheads show in which direction (horizontal or vertical)
  the recurrences are applied:
  if the arrowhead points downwards (leftwards),
  the recurrence on the vertical (horizontal) strip is applied.
  Note the original coordinates (not the abbreviated ones) are used in the figures.
  For example, at $(n-1, m-1)$ in Figure~\ref{F:S1} for $s=4$,
  the blue arrowhead pointing to the left indicates that
  the horizontal recurrence Eq.~\eqref{E:rec_h} is applied at that location
  for a horizontal strip to the left.
  The adjacent number $-4$ is the weight for the recurrence.
  Similarly the red arrowhead pointing downwards at the same coordinates
  indicates that the vertical recurrence Eq.~\eqref{E:rec_v} is applied at that location,
  with the same weight $-4$.
  
  These figures also show the lattice sites affected by the added recurrences.
  Since the number of terms in the strip recurrences is $s+1$ (Eq.~\eqref{E:rec1}),
  the figures shows that affected lattice sites are restricted to the
  $(2s + 1) \times (2s + 1)$ square.
  For the above mentioned example of the horizontal recurrence added at $(n-1, m-1)$ for $s=4$ in Figure~\ref{F:S1},
  the affected lattice sites are from $(n-1, m-1)$ to $(n-5, m-1)$.

  As we can see from \cref{F:S1,F:S1s5,F:S2,F:S2s5},
  the lattice sites in different quadrants are affected differently
  by the added vertical and horizontal recurrence
  identities. We will treat each quadrant separately.

\subsubsection{The set of recurrence identities in \POne} \label{SSS:P1}

In the set of \POne{},
we put the following vertical recurrence relations at lattice site $(i, j)$:
\begin{equation*} \label{E:P1_v}
  (-1)^j \binom{s}{j} \e^v_{i,j},  
\end{equation*}
for $0 \le i \le s$, $0 \le j \le i$ in the first \qu,
and
$s < i \le 2s$, $ i-s \le j \le s$
in the second \qu.
Similarly, horizontal recurrence relations
are put in the set of \POne{}
with the same coefficients in a symmetrical way with respect to the main diagonal.
In other word, the weights $\cob^v_{i,j}$ and $\cob^h_{i,j}$
(the superscripts $v$ and $h$ stand for vertical and horizontal respectively) are
\begin{equation} \label{E:P1}
  \begin{aligned}
  \cob^v_{i,j} &=  (-1)^j \binom{s}{j} , \\
  \cob^h_{i,j} &=  (-1)^i \binom{s}{i} .                
\end{aligned}
\end{equation}

Figure~\ref{F:S1} schematically shows the recurrence relation
identities in \POne{} for $s=4$,
and Figure~\ref{F:S1s5} for $s=5$.
The numbers at the lattice sites are the coefficients (weights) $\cob_{i,j}$
for the recurrence identity starting at that lattice site.

\begin{comment}
The red arrowheads at the position $(i, j)$, which point downwards,
indicate the vertical recurrence identity
$\e^v_{i,j}$,
while the blue arrowheads, which point to the left,
are for the horizontal recurrence identity
$\e^h_{i,j}$.
The numbers at the lattice sites are the coefficients (weights) $\cob_{i,j}$
for the recurrence identity starting at that lattice site.

Note that since the length of the recurrence relation
of lattice strips is $s+1$,
by construction the terms introduced by the identities in the set \POne{}
are restricted inside the  $(2s+1) \times (2s+1)$ square.
\end{comment}

As shown below,
the summation of the recurrence identities in \POne{}
gives twice the coefficients of Eq.~\eqref{E:2s} along the diagonal,
but it also introduces nonzero entries of the off-diagonal terms
($a_{n-i, m-j}, \text{ where } i \ne j$).
The set of recurrence identities in \PTwo (described below) is used to
cancel these off-diagonal terms.

  For the identities of vertical recurrences in the set \POne{}, 
  when $0 \le i \le s$,
  the affected sites are in the range of $0 \le j \le s+i$; and when
  $s < i \le 2s$,
  the affected sites are in the range of $i-s \le j \le 2s$.
  By simple variable changes we see that the coefficient of  $a_{n-i, m-j}$
  induced by the vertical recurrences in the set \POne{} is given by
  \begin{equation}  \label{E:down}
    \bone_{i, j}^{v} =
    (-1)^j  \sum_{j'=\max(0, i-s,j-s)}^{ \min(s, i, j)  }  \binom{s}{j'}  \binom{s}{j-j'} .
  \end{equation}

  By symmetry, the coefficient of $a_{n-i, m-j}$ at $(i, j)$ for the horizontal recurrences
  to the left direction %%% (on $n$; those vertical recurrences are on $m$)
  is
  \begin{equation}  \label{E:left}
     \bone_{i, j}^{h} =
    (-1)^i  \sum_{i'=\max(0, i-s,j-s)}^{ \min(s, i, j)  }  \binom{s}{i'}  \binom{s}{i-i'} .
  \end{equation}
  The coefficient of $a_{n-i, m-j}$ induced by the set \POne{} is thus
  the sum of these two numbers.

  \begin{lemma} \label{L:P1}
    The coefficient of $a_{n-i, m-j}$ induced by the set \POne{} is
  \begin{equation}  \label{E:plot1}
     \bone_{i, j}^{\POnem} =
    (-1)^j  \sum_{j'=\max(0, i-s,j-s)}^{ \min(s, i, j)  }  \binom{s}{j'}  \binom{s}{j-j'}
    +
    (-1)^i  \sum_{i'=\max(0, i-s,j-s)}^{ \min(s, i, j)  }  \binom{s}{i'}  \binom{s}{i-i'} .
  \end{equation}  
  \end{lemma}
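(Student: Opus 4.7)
The plan is to establish the two displayed formulas \eqref{E:down} and \eqref{E:left} by direct bookkeeping, after which the lemma is simply their sum by definition of $\bone_{i,j}^{\POnem}$.

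First I would fix the lattice site $(i,j)$ and ask which vertical identities $\e^v_{i',j'}$ in \POne{} can contribute a term proportional to $a_{n-i,m-j}$. Since $\e^v_{i',j'}$ expands to $\sum_{j''=0}^{s}(-1)^{j''}\binom{s}{j''}a_{n-i',m-j'-j''}$, a contribution arises precisely when $i'=i$ and $j'+j''=j$ for some $j''\in\{0,1,\ldots,s\}$. Writing $j'' = j-j'$, the contribution from $\e^v_{i,j'}$ to the coefficient of $a_{n-i,m-j}$ is the product of its weight $(-1)^{j'}\binom{s}{j'}$ and the expansion coefficient $(-1)^{j-j'}\binom{s}{j-j'}$, which equals $(-1)^j\binom{s}{j'}\binom{s}{j-j'}$ after pulling out the common sign.

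Next I would read off the admissible range of $j'$. Two constraints combine: membership of $\e^v_{i,j'}$ in \POne{}, and $j-j'\in[0,s]$. From the placement rule of section~\ref{SSS:P1}, $j'$ ranges over $[0,i]$ when $i\le s$ and over $[i-s,s]$ when $s<i\le 2s$; in both cases this is $\max(0,i-s)\le j'\le\min(s,i)$. Intersecting with $\max(0,j-s)\le j'\le j$ yields the joint bounds $\max(0,i-s,j-s)\le j'\le\min(s,i,j)$, which is exactly \eqref{E:down}. The same argument, run with the roles of rows and columns exchanged and using the horizontal identities $\e^h_{i',j'}$ placed symmetrically about the main diagonal, gives \eqref{E:left}.

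Finally, since \POne{} is the disjoint union of its vertical and horizontal identities and the coefficient of $a_{n-i,m-j}$ is additive over the identities, $\bone_{i,j}^{\POnem}=\bone_{i,j}^{v}+\bone_{i,j}^{h}$, which is \eqref{E:plot1}. The main obstacle, such as it is, lies not in algebra but in correctly assembling the three min/max constraints for $j'$ coming from the placement rule, the nonnegativity $j-j'\ge 0$, and the upper bound $j-j'\le s$; I would double check the endpoint behavior on the four corners of each quadrant against the tables in Figure~\ref{F:S1} and Figure~\ref{F:S1s5} to make sure no boundary case is miscounted.
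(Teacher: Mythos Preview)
Your proposal is correct and follows essentially the same approach as the paper: the paper derives \eqref{E:down} by noting the range of affected sites and appealing to ``simple variable changes,'' obtains \eqref{E:left} by symmetry, and states the lemma as their sum. You have simply spelled out that variable-change argument in more detail, making explicit how the three constraints on $j'$ (from the placement rule of \POne{} and from $0\le j-j'\le s$) combine into the $\max/\min$ bounds.
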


  From this expression it can be seen that for the main diagonal where $i=j$,
  the contribution of the identities of the set \POne is twice the value of Theorem~\ref{T:main}.
  \begin{coro} \label{C:P1_diag}
    The coefficient of $a_{n-i, m-i}$ along the diagonal induced by the set \POne{} is
  \begin{equation} \label{E:P1_diag}
    \bone_{i, i}^{\POnem} = (-1)^i \binom{2s}{i} \times 2,    
  \end{equation}
  which is twice the coefficient of $a_{n-i, m-j}$ of Theorem~\ref{T:main}.
  \end{coro}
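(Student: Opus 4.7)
The plan is to specialize Lemma~\ref{L:P1} to the diagonal $i=j$ and reduce the resulting double expression to a single Vandermonde convolution. Setting $j=i$ in Eq.~\eqref{E:plot1}, the two terms on the right-hand side become structurally identical: both sums run over the index set $\max(0, i-s) \le \ell \le \min(s, i)$ (the third argument $j-s$ of the $\max$ coincides with $i-s$, so it gives no new restriction), with summand $\binom{s}{\ell}\binom{s}{i-\ell}$, and both are multiplied by $(-1)^i$. Hence the contributions coming from vertical and horizontal recurrences collapse into twice a single sum.

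Next, I would observe that the resulting range is precisely the support of the summand: $\binom{s}{\ell}=0$ unless $0\le\ell\le s$, and $\binom{s}{i-\ell}=0$ unless $0\le i-\ell\le s$, i.e.\ $i-s\le\ell\le i$. Extending the sum to all integers $\ell\in\mathbb{Z}$ therefore does not change its value, and I can invoke the Vandermonde--Chu identity
\[
  \sum_{\ell\in\mathbb{Z}} \binom{s}{\ell}\binom{s}{i-\ell} = \binom{2s}{i}.
\]
Combining this with the factor of $2$ coming from the two symmetric contributions yields $\bone_{i,i}^{\POnem} = 2\,(-1)^i \binom{2s}{i}$, which is exactly Eq.~\eqref{E:P1_diag}.

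I expect no genuine obstacle: the whole argument is a single substitution followed by Vandermonde. The only point that requires a brief sanity check is that the explicit summation bounds $\max(0,i-s,j-s)$ and $\min(s,i,j)$ in Eq.~\eqref{E:plot1} really do coincide on the diagonal with the natural support of the binomial product, so that no boundary terms are lost when replacing the finite sum by an unrestricted convolution. Once this bookkeeping is settled, the corollary follows in one line.
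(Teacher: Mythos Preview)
Your proposal is correct and matches the paper's approach: the paper states the corollary as an immediate consequence of Eq.~\eqref{E:plot1} (``From this expression it can be seen\ldots'') and later invokes the Chu--Vandermonde identity Eq.~\eqref{E:binom_iden_V} for exactly this kind of sum. Your write-up simply makes explicit the substitution $j=i$, the collapse of the two sums, and the observation that the truncated bounds coincide with the natural support of $\binom{s}{\ell}\binom{s}{i-\ell}$, which is precisely the detail the paper leaves implicit.
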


\begin{figure}[!ht]
  \centering
  \begin{tikzpicture}[scale=1.2, transform shape]
  [font=\tiny]
  \tikzstyle{every node}=[font=\tiny]

  \def\s{4};
  \def\nMin{0};
  \def\mMin{0};
  \pgfmathtruncatemacro{\nMax}{(2 * \s)};
  \pgfmathtruncatemacro{\mMax}{(2 * \s)};

  \foreach \i in {\nMin, ..., \nMax} {
    \ifnum \i<\nMax
      \fill[green!20] (\i, \i) rectangle (\i+1,\i+1);
    \fi
      
    \pgfmathparse{int(mod(\i,2))}
    \ifnum 0=\pgfmathresult\relax
      \ifnum \i=\nMax
        \draw [thin] (\i,\mMin) -- (\i,\mMax)  node [below] at (\i,\mMin) {$n$};
      \else  
        \pgfmathtruncatemacro{\j}{(\nMax - \i)};
        \draw [thin] (\i,\mMin) -- (\i,\mMax)  node [below] at (\i,\mMin) {${n\!\!-\!\!\j}$};
      \fi   
    \else
      \draw [thin] (\i,\mMin) -- (\i,\mMax);
    \fi  
  }

  \pgfmathtruncatemacro{\mM}{(\nMax - 1)};
  \foreach \i in {\mMin, ..., \mM} {
    \pgfmathtruncatemacro{\j}{(\mMax - \i)};
    \draw [thin] (\nMin,\i) -- (\nMax,\i) node [right] at (\nMax,\i) {${m\!\!-\!\!\j}$};
  }
  \draw [thin] (\nMin, \mMax) -- (\nMax,\mMax) node [right] at (\nMax,\mMax) {$m$};

\begin{comment}  
  \draw [red] (7,7) node { $\downarrow$  };
  \draw [red] (6,6) node[rotate=180, yshift=0.5mm] { $\Myarrow$  };
  \draw [red] (2,2) node[rotate=180, yshift=0.5mm] { $\MyarrowS$  };
  \draw [red] (7,7) node[rotate=180] { $\Myarrow$  };
  \draw [red] (5,5) node { $A$  };
  \draw [red] (4,4) node { A  };

  \draw [ -{Stealth[red, open]} ] (0, 0);
  \draw [ -{Stealth[red]} ] (1, 2);

  \draw [red] (8,8) node[rotate=180, yshift=0.5mm] { $\Myarrow$  };
  \draw [red] (8,8) node[rotate=90, yshift=0.5mm] { $\Myarrow$  };
 
  \draw [red] (6,4) node[rotate=180, yshift=0.5mm] { $\MyarrowS$  };
  \draw [red] (6,4) node[rotate=90, yshift=0.5mm] { $\MyarrowS$  };

\end{comment}

  \foreach \i in {0, ..., \nMax} {
    \ifnum \i > \s
      \pgfmathtruncatemacro{\ii}{(\i - \s)};
      \pgfmathtruncatemacro{\jj}{(\s)};
    \else
      \pgfmathtruncatemacro{\ii}{(0)};
      \pgfmathtruncatemacro{\jj}{(\i)};
    \fi  

    \foreach \j in {\ii, ..., \jj} {
      \pgfmathtruncatemacro{\n}{(\nMax - \i)};
      \pgfmathtruncatemacro{\m}{(\mMax - \j)};

      \draw [red] (\n, \m) node[rotate=180, yshift=0.5mm] { $\Myarrow$  };
      \draw [blue] (\m, \n) node[rotate=90, yshift=0.5mm] { $\Myarrow$  };

      \pgfmathtruncatemacro{\c}{ \s! / (\j! * (\s-\j)!   };
      \pgfmathtruncatemacro{\c}{  int(mod(\j,2)) == 0 ? \c : -\c  };
      \draw [red, xshift=1.5mm, yshift=-1.5mm] (\n, \m) node {$\c$};
      \draw [blue, xshift=-1.5mm, yshift=1.5mm] (\m, \n) node {$\c$};
    }

  }

\end{tikzpicture}
  \caption{(Color online)
    The set of recurrence identities in \POne{} for $s=4$.
    The arrows labeled in red are for the vertical recurrences; those in blue
    for the horizontal recurrences.
    The numbers are the coefficients $\cob_{ij}$ (weights) for the recurrences.
    The main diagonal is highlighted in green.
    Note the original coordinates (not the abbreviated ones) are used in the figures.
  \label{F:S1}}
\end{figure}

\begin{figure}[!ht]
  \centering
  \begin{tikzpicture}[scale=1.0, transform shape]
  [font=\tiny]
  \tikzstyle{every node}=[font=\tiny]

  \def\s{5};
  \def\nMin{0};
  \def\mMin{0};
  \pgfmathtruncatemacro{\nMax}{(2 * \s)};
  \pgfmathtruncatemacro{\mMax}{(2 * \s)};

  \foreach \i in {\nMin, ..., \nMax} {
    \ifnum \i<\nMax
      \fill[green!20] (\i, \i) rectangle (\i+1,\i+1);
    \fi

    \pgfmathparse{int(mod(\i,2))}
    \ifnum 0=\pgfmathresult\relax
      \ifnum \i=\nMax
        \draw [thin] (\i,\mMin) -- (\i,\mMax)  node [below] at (\i,\mMin) {$n$};
      \else  
        \pgfmathtruncatemacro{\j}{(\nMax - \i)};
        \draw [thin] (\i,\mMin) -- (\i,\mMax)  node [below] at (\i,\mMin) {${n\!\!-\!\!\j}$};
      \fi   
    \else
      \draw [thin] (\i,\mMin) -- (\i,\mMax);
    \fi  
  }

  \pgfmathtruncatemacro{\mM}{(\nMax - 1)};
  \foreach \i in {\mMin, ..., \mM} {
    \pgfmathtruncatemacro{\j}{(\mMax - \i)};
    \draw [thin] (\nMin,\i) -- (\nMax,\i) node [right] at (\nMax,\i) {${m\!\!-\!\!\j}$};
  }
  \draw [thin] (\nMin, \mMax) -- (\nMax,\mMax) node [right] at (\nMax,\mMax) {$m$};

\begin{comment}  
  \draw [red] (7,7) node { $\downarrow$  };
  \draw [red] (6,6) node[rotate=180, yshift=0.5mm] { $\Myarrow$  };
  \draw [red] (2,2) node[rotate=180, yshift=0.5mm] { $\MyarrowS$  };
  \draw [red] (7,7) node[rotate=180] { $\Myarrow$  };
  \draw [red] (5,5) node { $A$  };
  \draw [red] (4,4) node { A  };

  \draw [ -{Stealth[red, open]} ] (0, 0);
  \draw [ -{Stealth[red]} ] (1, 2);

  \draw [red] (8,8) node[rotate=180, yshift=0.5mm] { $\Myarrow$  };
  \draw [red] (8,8) node[rotate=90, yshift=0.5mm] { $\Myarrow$  };
 
  \draw [red] (6,4) node[rotate=180, yshift=0.5mm] { $\MyarrowS$  };
  \draw [red] (6,4) node[rotate=90, yshift=0.5mm] { $\MyarrowS$  };

\end{comment}

  \foreach \i in {0, ..., \nMax} {
    \ifnum \i > \s
      \pgfmathtruncatemacro{\ii}{(\i - \s)};
      \pgfmathtruncatemacro{\jj}{(\s)};
    \else
      \pgfmathtruncatemacro{\ii}{(0)};
      \pgfmathtruncatemacro{\jj}{(\i)};
    \fi  

    \foreach \j in {\ii, ..., \jj} {
      \pgfmathtruncatemacro{\n}{(\nMax - \i)};
      \pgfmathtruncatemacro{\m}{(\mMax - \j)};

      \draw [red] (\n, \m) node[rotate=180, yshift=0.5mm] { $\Myarrow$  };
      \draw [blue] (\m, \n) node[rotate=90, yshift=0.5mm] { $\Myarrow$  };

      \pgfmathtruncatemacro{\c}{ \s! / (\j! * (\s-\j)!   };
      \pgfmathtruncatemacro{\c}{  int(mod(\j,2)) == 0 ? \c : -\c  };
      \draw [red, xshift=1.5mm, yshift=-1.5mm] (\n, \m) node {$\c$};
      \draw [blue, xshift=-1.5mm, yshift=1.5mm] (\m, \n) node {$\c$};
    }

  }

\end{tikzpicture}
  \caption{(Color online)
    The set of recurrence identities in \POne{} for $s=5$.
    Note the original coordinates (not the abbreviated ones) are used in the figures.
  \label{F:S1s5}}
\end{figure}

\subsubsection{The set of recurrence identities in \PTwo} \label{SSS:P2}

For the recurrence identities in \PTwo,
the weights used are $\coe_{i,j}$ instead of binomial coefficients used in \POne.
In the first \qu,
we put the following vertical recurrence relations at lattice site $(i, j)$:
\begin{equation*}\label{E:P2v1}
  \coe_{i,j} \e^v_{i,j}, \quad 0 \le i < s, \, i < j \le s.  
\end{equation*}
\begin{comment}
In the second \qu, at the position $(j+s, i)$,
we put
\[
  (-1)^s \coe_{s-j, s-i} \e^v_{j+s, i} , \quad 0 \le i < s, i < j \le s.  
\]
For example,
for $s=5$, 
at position $(8, 1) = (3+5, 1)$,
the coefficient is $(-1)^5 \coe_{2,4} = -125$. 
\end{comment}

In the second \qu, at the position $(i, j)$,
we put
\begin{equation*}\label{E:P2v2}
  (-1)^s \coe_{2s-i, s-j} \e^v_{i, j} , \quad s < i \le 2s,  \,  0 \le j < i-s.  
\end{equation*}

Similarly as for the set \POne,
horizontal recurrence identities are put symmetrically about the main diagonal
in the set \PTwo. Hence for \PTwo the weights are
\begin{equation} \label{E:P2}
  \begin{aligned}
    \bar{\coe}_{i,j}^v & = \begin{cases}
      \coe_{i,j}, & 0 \le i < s, \, i < j \le s \\
      (-1)^s \coe_{2s-i, s-j}, & s < i \le 2s,  \,  0 \le j < i-s
    \end{cases} \\
    \bar{\coe}_{i,j}^h & = \begin{cases}
      \coe_{j,i}, & 0 \le j < s, \, j < i \le s \\
      (-1)^s \coe_{2s-j, s-i} & s < j \le 2s,  \,  0 \le i < j-s
    \end{cases}    
  \end{aligned}  
\end{equation}

Figure~\ref{F:S2} schematically shows the recurrence 
identities in \PTwo{} for $s=4$, and Figure~\ref{F:S2s5} for $s=5$.
The numbers are the coefficients $\bar{\coe}_{i,j}$ (weights)
for the recurrence identity starting at the lattice site.

\begin{comment}
The red arrowhead at the position $(i, j)$ indicates the vertical recurrence relation identity
$\e^v_{i,j}$,
while the blue arrowhead for the horizontal recurrence identity
$\e^h_{i,j}$.
The numbers are the coefficients $\bar{\coe}_{i,j}$ (weights)
for the recurrence identity starting at the lattice site.
As the terms introduced by the recurrence identities in \POne,
terms introduced by the recurrence identities in \PTwo
are restricted inside the $(2s+1) \times (2s+1)$ square
with corners at $(0, 0)$ and $(2s, 2s)$.
\end{comment}

\begin{figure}[!ht]
  \centering
  \input{lrs_2_tikz_xint_array_input}
  \caption{(Color online)
    The set of recurrence identities in \PTwo{} for $s=4$.
    The arrows labeled in red are for the vertical recurrences; those in blue
    for the horizontal recurrences.
    The numbers are the coefficients $\bar{\coe}_{i,j}$ (weights) for the recurrences.
    Note the original coordinates (not the abbreviated ones) are used in the figures.
  \label{F:S2}}
\end{figure}

\begin{figure}[!ht]
  \centering
  \input{lrs_2_tikz_xint_array_s5_input}
  \caption{(Color online)
    The set of recurrence identities in \PTwo{} for $s=5$.
    Note the original coordinates (not the abbreviated ones) are used in the figures.
  \label{F:S2s5}}
\end{figure}

\begin{comment}
\begin{remark}
  The restrictions on $m$ and $n$ in Theorem~\ref{T:main} is $s$ greater than
  those on a lattice strip (Lemma~\ref{L:strip}).
  This is due to the fact that
  the square is of size $2s+1$.
  The conditions have to be true for the smallest lattice sites
  where the recurrence identities are added,
  which are $n-s$ and $m-s$.
  It is clear from the Figures.
\end{remark}
\end{comment}

\subsection{Toy examples for $s=4$} \label{SS:toy}
Before giving the formal proof of Theorem~\ref{T:main},
some concrete examples are given for $s=4$.
The weights for \POne (Eq.~\eqref{E:P1}) and \PTwo (Eq.~\eqref{E:P2})
are given in Figure~\ref{F:S1} and Figure~\ref{F:S2} respectively.

In this section we use the original, unabbreviated notation for the
coordinates of lattice sites.  We focus on two vertical lines
$n-5$ and $n-6$, and two horizontal lines $m-5$ and $m-6$.
These fours lines have $4$ intersections in third \qu $Q_3$,
two on the main diagonal
and two off-diagonal.
The two lattice sites on the main diagonal are
$(n-5, m-5)$ and $(n-6, m-6)$,
and the two off-diagonal lattice sites are
$(n-5, m-6)$ and $(n-5, m-6)$.

%%%%% rls_examples.mw

For the vertical line $n-5$,
the sum of the strip recurrences of Eq.~\eqref{E:rec1} from \POne
(using the weights from Eq.~\eqref{E:P1}, as shown in Figure~\ref{F:S1})
is given by %%% P1v_5
\begin{equation} \label{E:P1v_5}
\begin{aligned}
  S_{n-5}^{\POnem-v} &=
    -4  \sum_{j=0}^4  (-1)^j  \binom{4}{j}  a_{n-5, m-1-j} 
    +6  \sum_{j=0}^4  (-1)^j  \binom{4}{j}  a_{n-5, m-2-j}  \\
  &\phantom{{}=}
    -4  \sum_{j=0}^4  (-1)^j  \binom{4}{j}  a_{n-5, m-3-j} 
    +1  \sum_{j=0}^4  (-1)^j  \binom{4}{j}  a_{n-5, m-4-j} ,
  \end{aligned}
\end{equation}  
and
the sum from \PTwo
(using the weights from Eq.~\eqref{E:P2}, as shown in Figure~\ref{F:S2})
is given by  %%% P2v_5 
\begin{equation} \label{E:P2v_5}
   S_{n-5}^{\PTwom-v} = -13 \sum_{j=0}^4 (-1)^j \binom{4}{j}  a_{n-5, m-j} .
\end{equation}

Similarly, for the vertical line $n-6$,  %%% P1v_6
\begin{equation} \label{E:P1v_6}
\begin{aligned}
  S_{n-6}^{\POnem-v} &=
     6  \sum_{j=0}^4  (-1)^j  \binom{4}{j}  a_{n-6, m-2-j} 
    -4  \sum_{j=0}^4  (-1)^j  \binom{4}{j}  a_{n-6, m-3-j}  \\
  &\phantom{{}=}
    +1  \sum_{j=0}^4  (-1)^j  \binom{4}{j}  a_{n-6, m-4-j},
  \end{aligned}
\end{equation}    
and  %%% P2v_6
\begin{equation} \label{E:P2v_6}
  S_{n-6}^{\PTwom-v} =
    15 \sum_{j=0}^4 (-1)^j \binom{4}{j}  a_{n-6, m-j}
  + 24 \sum_{j=0}^4 (-1)^j \binom{4}{j}  a_{n-6, m-1-j}.
\end{equation}

For the horizontal line $m-5$, we have  %%% P1h_5
\begin{equation} \label{E:P1h_5}
\begin{aligned}
  S_{m-5}^{\POnem-h} &=
    -4  \sum_{i=0}^4  (-1)^i  \binom{4}{i}  a_{n-1-i, m-5} 
    +6  \sum_{i=0}^4  (-1)^i  \binom{4}{i}  a_{n-2-i, m-5}  \\
    &\phantom{{}=}
    -4  \sum_{i=0}^4  (-1)^i  \binom{4}{i}  a_{n-3-i, m-5} 
    +1  \sum_{i=0}^4  (-1)^i  \binom{4}{i}  a_{n-4-i, m-5} ,
  \end{aligned}
\end{equation}  
and %%% P2h_5
\begin{equation} \label{E:P2h_5}
   S_{m-5}^{\PTwom-h} = -13 \sum_{i=0}^4 (-1)^i \binom{4}{i}  a_{n-i, m-5} .
\end{equation}

Similarly, for the horizontal line $m-6$,  %%% P1h_6
\begin{equation} \label{E:P1h_6}
\begin{aligned}
  S_{m-6}^{\POnem-h} &=
     6  \sum_{i=0}^4  (-1)^i  \binom{4}{i}  a_{n-2-i, m-6} 
    -4  \sum_{i=0}^4  (-1)^i  \binom{4}{i}  a_{n-3-i, m-6}  \\
    &\phantom{{}=}
    +1  \sum_{i=0}^4  (-1)^i  \binom{4}{i}  a_{n-4-i, m-6},
  \end{aligned}
\end{equation}    
and  %%% P2h_6
\begin{equation} \label{E:P2h_6}
  S_{m-6}^{\PTwom-h} =
    15 \sum_{i=0}^4 (-1)^i \binom{4}{i}  a_{n-i, m-6}
  + 24 \sum_{i=0}^4 (-1)^i \binom{4}{i}  a_{n-1-i, m-6}.
\end{equation}

\subsubsection{Diagonal terms $(n-5, m-5)$ and $(n-6, m-6)$}

%%% P1v_5 + P1h_5 + P2v_5 + P2h_5
For the diagonal terms $(n-5, m-5)$,
the only contributions are from recurrence terms along the vertical line $n-5$
and the horizontal line $m-5$.  Direct calculation shows that the coefficient of $a_{n-5, m-5}$
  of the four sums (Eqs.~\eqref{E:P1v_5}, \eqref{E:P2v_5}, \eqref{E:P1h_5}, and \eqref{E:P2h_5})
  is
\[
  [a_{n-5, m-5}] \left(
    S_{n-5}^{\POnem-v} + S_{n-5}^{\PTwom-v} + S_{m-5}^{\POnem-h} + S_{m-5}^{\PTwom-h}
  \right)
  = -56 + 0  - 56 + 0
  = -112 = (-1)^5 \binom{2 \times 4}{ 5} \times 2,
\]
which agrees with Eq.~\eqref{E:2s} of Theorem~\ref{T:main}.

%%% P1v_6 + P1h_6 + P2v_6 + P2h_6;
Similarly for the diagonal terms $(n-6, m-6)$,
the only contributions are from recurrence terms along the vertical line $n-6$
and the horizontal line $m-6$.
Direct calculation shows that the coefficient of $a_{n-6, m-6}$
  of the four sums (Eqs.~\eqref{E:P1v_6}, \eqref{E:P2v_6}, \eqref{E:P1h_6}, and \eqref{E:P2h_6})
  is
\[
  [a_{n-6, m-6}] \left(
    S_{n-6}^{\POnem-v} + S_{n-6}^{\PTwom-v} + S_{m-6}^{\POnem-h} + S_{m-6}^{\PTwom-h}
  \right)
  = 28+0+28+0
  = 56 = (-1)^6 \binom{2 \times 4}{ 6} \times 2,
\]
which agrees with Eq.~\eqref{E:2s} of Theorem~\ref{T:main}.

\subsubsection{Off-diagonal terms $(n-5, m-6)$ and $(n-6, m-5)$}

%%% P1v_5 + P1h_6 + P2v_5 + P2h_6;
For the off-diagonal terms $(n-5, m-6)$,
the only contributions are from recurrence terms along the vertical line $n-5$
and the horizontal line $m-6$.  Direct calculation shows that the coefficient of $a_{n-5, m-6}$
  of the four sums (Eqs.~\eqref{E:P1v_5}, \eqref{E:P2v_5}, \eqref{E:P1h_6}, and \eqref{E:P2h_6})
  is
\[
  [a_{n-5, m-6}] \left(
    S_{n-5}^{\POnem-v} + S_{n-5}^{\PTwom-v} + S_{m-6}^{\POnem-h} + S_{m-6}^{\PTwom-h}
  \right)
  = 28+0-52+24
  = 0.
\]

%%% P1v_6 + P1h_5 + P2v_6 + P2h_5;
Similarly for the off-diagonal terms $(n-6, m-5)$,
the only contributions are from recurrence terms along the vertical line $n-6$
and the horizontal line $m-5$.  Direct calculation shows that the coefficient of $a_{n-6, m-5}$
  of the four sums (Eqs.~\eqref{E:P1v_6}, \eqref{E:P2v_6}, \eqref{E:P1h_5}, and \eqref{E:P2h_5})
  is
\[
  [a_{n-6, m-5}] \left(
    S_{n-6}^{\POnem-v} + S_{n-6}^{\PTwom-v} + S_{m-5}^{\POnem-h} + S_{m-5}^{\PTwom-h}
  \right)
  = -52+24+28+0
  = 0.
\]

\section{Proof of the main theorem} \label{S:proof}

\subsection{The first \qu{} $Q_1$} \label{SS:Q1}  

From Figure~\ref{F:S2} and Figure~\ref{F:S2s5} we see that the recurrence identities in \PTwo
do not introduce terms in the main diagonal inside the first \qu{} $Q_1$ (the top right \qu{}).
Let consider the strict lower-right triangle of the first \qu,
which excludes the main diagonal.
In this triangle,
both horizontal and vertical recurrence identities in \POne
introduce terms (Figure~\ref{F:S1} and Figure~\ref{F:S1s5}),
while only 
vertical recurrences in \PTwo
have effects (Figure~\ref{F:S2} and Figure~\ref{F:S2s5}). 
These identities are $\e^v_{i,j}$,
where 
$0 \le i < s$ and $i+1 \le j \le s$.
Below we prove Eq.~\eqref{E:goal_lhs}
for the first \qu.

\begin{proof}[Partial proof of Theorem~\ref{T:main} in the first \qu{}]
  For the lattice site $(i, j)$ of the strict lower-right triangle of the first \qu{} $Q_1$,
  $0 \le i < s$ and $i+1 \le j \le s$.
  The coefficient of $a_{n-i, m-j}$ contributed by the identities from \POne
  is given by Eq.~\eqref{E:plot1} of Lemma~\ref{L:P1},
  which simplifies to
  \begin{align} \label{E:Q1_S1}
    \bone_{i,j}
    &=
      (-1)^j \sum_{j'=0}^{i} \binom{s}{j'} \binom{s}{j-j'} 
      + (-1)^i \sum_{i'=0}^{i}  \binom{s}{i'} \binom{s}{i-i'}   \nonumber \\
    &= (-1)^j \sum_{j'=0}^{i} \binom{s}{j'} \binom{s}{j-j'} + (-1)^i \binom{2s}{i} ,
  \end{align}
where the binomial identity Eq.~\eqref{E:binom_iden_V}
is used in the second step.

The contribution from identities of \PTwo is given by
\begin{equation*}
  \btwo_{i,j}
  = \sum_{j'=i+1}^{j} (-1)^{j-j'} \coe_{i, j'} \binom{s}{j-j'} 
  = \sum_{j'=0}^{j} (-1)^{j-j'} \coe_{i, j'} \binom{s}{j-j'},
\end{equation*}
where we use the fact that $\coe_{i, j} = 0$ for $j \le i$ to extend the range of summation
in the second step.
If we did not change the range of the summation, the proof would be more involved
for the third term of $\coe_{i, j}$, due to nonstandard boundary conditions.

The contribution from the first term of $\coe_{i,j}$ is
\begin{equation} \label{E:Q1_h1}
   \btwo_{i,j}^{(1)} = (-1)^{j+1} \sum_{j'=0}^{i} \binom{s}{j'} \binom{s}{j-j'}  .
\end{equation}

The contribution from the second term of $\coe_{i,j}$ is
  \begin{align} \label{E:Q1_h2}
    \btwo_{i,j}^{(2)} &=
   \sum_{j'=i+1}^{j} (-1)^{j-j'} \coe_{i, j'}^{(2)} \binom{s}{j-j'}  \nonumber \\
                      &=  (-1)^{i+1}
                        \binom{2s}{i}
                        \sum_{j'=i+1}^j (-1)^{j-j'}
                        \binom{s+j'-i-1}{ s}
                        \binom{s}{j-j'} \nonumber  \\
    &= (-1)^{i+1}  \binom{2s}{i} .
  \end{align}
  The summation in the second step is evaluated to $1$,
  which can be obtained by Lemma~\ref{L:binom_iden} in Appendix~\ref{S:A_binom},
  by making variable changes $j'-i-1 \to j'$ and $j-i-1 \to j$.
  The identity is a special case of $t=0$ in Eq.~\eqref{E:binom_iden}.
  
The contribution from the third term of $\coe_{i,j}$ is,
\[
   \btwo_{i,j}^{(3)} =  \sum_{j'=0}^{j} (-1)^{j-j'} \coe_{i, j'}^{(3)} \binom{s}{j-j'} .
 \]
Exchange the order of summation in the above equation,
the inner sum becomes
\[
  \sum_{j'=0}^j
  (-1)^{j'}
  \binom{s-t-1+j'} { j'}
  \binom{s} {j-j'} .
\]
Lemma~\ref{L:binom_iden} shows that the sum vanishes.

Combining above results of Eq.~\eqref{E:Q1_S1}, Eq.~\eqref{E:Q1_h1},
and Eq.~\eqref{E:Q1_h2},
we obtain
\[
  \bone_{i,j} + \btwo_{i,j} = 0
\]
in the strict lower-right triangle of the first \qu{} $Q_1$.

By symmetry, we can also prove a similar result
for the strict upper-left triangle of the first \qu{} $Q_1$:
the contributions of identities from \POne and \PTwo sum to zero.
Since the recurrences in \PTwo do not affect the main diagonal
in the first \qu,
together with Corollary~\ref{C:P1_diag},
we prove that
Eq.~\eqref{E:goal_lhs} holds in the first \qu $Q_1$.
\end{proof}

\subsection{The third \qu $Q_3$}

\subsubsection{Summary}

The third \qu (bottom left \qu{}) is more complicated than the first \qu.
In this section we focus on the strict upper-left triangle of the third \qu,
which excludes the main diagonal.
Once we prove the theorem in this triangle,
by symmetry we also prove the theorem
in the strict lower-right triangle.
The case of diagonal in the third \qu has its unique characteristics,
and is discussed in section~\ref{SS:Q3_diag}.

On the strict upper-left triangle above the
diagonal
of the third \qu where $i>j$,
only the vertical recurrences from \PTwo (those arrowheads colored in red of Figure~\ref{F:S2}
and Figure~\ref{F:S2s5})
contribute,
while both vertical and horizontal recurrences from \POne contribute in this triangle
(Figure~\ref{F:S1} and Figure~\ref{F:S1s5}).

We prove that the contributions from the first term of $\bar{\coe}_{i,j}$ in \PTwo cancel
those from the vertical recurrences of \POne,  %%% sp11
\[
  \bone_{i,j}^{v} + \btwo_{i,j}^{(1)} = 0,
\]
the contributions from the second term of $\bar{\coe}_{i,j}$ in \PTwo cancel
those from the horizontal recurrences of \POne, %%% sp12
\[
  \bone_{i,j}^{h} + \btwo_{i,j}^{(2)} = 0,
\]
while the contributions from the third term of $\bar{\coe}_{i,j}$ in \PTwo vanish
\[
  \btwo_{i,j}^{(3)} = 0 .
\]
To prove $\btwo_{i,j}^{(3)} = 0$,
we transform the double sum involving the third term of $\coe_{i,j}$
into a new form as a sum of a simple term and a new double sum.
The motivation for this transform is that
the new double sum satisfies a simple recurrence relation. 
With this new form,
we show that these two terms cancel each other.

\begin{comment}
  \begin{itemize}
\item
  The first term of $\coe_{2s-i, s-j'}$ in the sum of $ a_{n-i, m-j}^{(S_{2v})} $ 
  cancels the first term (sp1) in Eq.~\eqref{E:Q3_S1}, similarly as for Q4;
  This is proved below;
\item
  The second term of $\coe_{2s-i, s-j'}$ cancels the second term (sp2) in Eq.~\eqref{E:Q3_S1};
  This is proved below in Lemma~\ref{L:Q3_S20v} and Lemma~\ref{L:Q3_sp2};
\item
  The third term of $\coe_{2s-i, s-j'}$ is zero:
  the single term cancels the double sum.
  We need to prove
  that the double sum is
  \[
    (-1)^{j+1} \binom{2s} {j} .
  \]
\end{itemize}
\end{comment}

In the third \qu at position $(i, j)$, the vertical recurrences in \PTwo contribute
(see section~\ref{SSS:P2},  Figure~\ref{F:S2} and Figure~\ref{F:S2s5})
\begin{equation} \label{E:Q3_v}
  \btwo_{i,j}^{v} = (-1)^s \sum_{j'=0}^{s} (-1)^{j-j'} \coe_{2s-i, s-j'} \binom{s}{j-j'} .
\end{equation}
We deal each of the three terms of $\coe_{i,j}$ separately below.

\subsubsection{ The contributions of  $\coe_{2s-i, s-j}^{(1)}$
  cancels the vertical recurrences of \POne } \label{SSS:Q3_s1g1}

The first term of $\coe_{i, j}$ is
$
  \coe_{i, j}^{(1)} = (-1)^{j+1} \binom{s}{j} [j \le i] .
$
This term in the sum of $\btwo_{i,j}^{v}$ above (Eq.~\eqref{E:Q3_v}) is simplified as
\begin{align} \label{E:Q3_s1g1}
  \btwo_{i,j}^{(1)} &= 
   (-1)^s \sum_{j'=0}^{s} (-1)^{j-j'} (-1)^{s-j'+1} \binom{s}{s-j'} \binom{s}{j-j'} [s-j' \le 2s-i] \nonumber \\
  &= (-1)^{j+1} \sum_{j'=i-s}^{s}  \binom{s}{j'} \binom{s}{j-j'},
\end{align}
which cancels the contributions from
the vertical recurrences of \POne (Eq.~\eqref{E:down}).

\begin{lemma} \label{L:Q3_P1vP2_1}
  \[
    \bone_{i,j}^{v} + \btwo_{i,j}^{(1)} = 0 .
  \]
\end{lemma}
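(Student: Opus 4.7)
The plan is to prove this lemma by direct term-by-term comparison of the expressions already derived for $\bone_{i,j}^{v}$ and $\btwo_{i,j}^{(1)}$: no further combinatorial identity is needed, and in particular no Gosper/Zeilberger machinery.

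First, I would specialize Eq.~\eqref{E:down} to the region in which Lemma~\ref{L:Q3_P1vP2_1} is stated, namely the strict upper-left triangle of $Q_3$ with $s \le j < i \le 2s$. Under these constraints both $i-s$ and $j-s$ are nonnegative, and $i > j$ forces $i-s > j-s$, so the lower bound $\max(0, i-s, j-s)$ in Eq.~\eqref{E:down} collapses to $i-s$. Likewise, since $i, j \ge s$, the upper bound $\min(s, i, j)$ reduces to $s$. This gives
\[
  \bone_{i,j}^{v} = (-1)^j \sum_{j' = i-s}^{s} \binom{s}{j'}\binom{s}{j-j'}.
\]

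Next I would compare this directly with Eq.~\eqref{E:Q3_s1g1}, which already reads
\[
  \btwo_{i,j}^{(1)} = (-1)^{j+1} \sum_{j' = i-s}^{s} \binom{s}{j'}\binom{s}{j-j'}.
\]
The summation ranges and summands coincide exactly, while the prefactors are opposite in sign, so the two quantities cancel and the lemma follows.

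The only subtlety --- and it is a mild one --- is confirming that the hypotheses defining the strict upper-left triangle of $Q_3$ are precisely what make the $\max$ and $\min$ in Eq.~\eqref{E:down} reduce to $i-s$ and $s$, respectively; once that bookkeeping is carried out, no obstacle remains. The lemma is essentially a repackaging of the simplification already performed in deriving Eq.~\eqref{E:Q3_s1g1} from Eq.~\eqref{E:Q3_v}, and the real combinatorial work is deferred to the analogous cancellations for the second and third terms of $\bar{\coe}_{i,j}$, where the binomial identities of Appendix~\ref{S:A_binom} will be needed.
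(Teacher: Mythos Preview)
Your proposal is correct and matches the paper's approach: the paper derives Eq.~\eqref{E:Q3_s1g1} and immediately notes that it ``cancels the contributions from the vertical recurrences of \POne{} (Eq.~\eqref{E:down}),'' then states the lemma without further proof. Your explicit verification that the bounds in Eq.~\eqref{E:down} specialize to $i-s$ and $s$ on the strict upper-left triangle of $Q_3$ is exactly the bookkeeping the paper leaves implicit.
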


\begin{comment}
The first term of $\coe_{2s-j, s-i'}$ in the sum of $ a_{n-i, m-j}^{(S_{2h})} $ 
cancels the second term in Eq.~\eqref{E:Q3_S1}, similarly as for Q4.
\end{comment}

\subsubsection{The contributions of $\coe_{2s-i,s-j}^{(2)}$
 cancel the horizontal recurrences of \POne} \label{ss:Q3_s1g2}

In the upper-left triangle of the third \qu $Q_3$,
the contributions from  the horizontal recurrences of \POne are
\[
  (-1)^i  \sum_{i'=j-s}^s \binom{s}{i'}  \binom{s}{i-i'} .
\]
Since $i \ge j$,
the boundary conditions in the sum are standard, and Chu-Vandermonde identity
Eq.~\eqref{E:binom_iden_V} applies.
\begin{lemma} \label{L:Q3_sp2}
  For $i \ge j$ in upper-left triangle,
  the contributions from the horizontal recurrences of \POne
  \begin{equation} \label{E:Q3_sp2}
    \bone_{i,j}^{h} =
    (-1)^i  \sum_{i'=j-s}^s \binom{s}{i'}  \binom{s}{i-i'} = (-1)^{i} \binom{2s}{i} .    
  \end{equation}
\end{lemma}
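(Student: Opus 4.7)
The plan is to apply the Chu--Vandermonde convolution identity directly, after verifying that the summation range in the statement merely enlarges the natural support of the summand $\binom{s}{i'}\binom{s}{i-i'}$ without introducing any nonzero contribution. The content of the lemma, roughly speaking, is that in the upper-left triangle of $Q_3$ the boundary conditions on the convolution have become standard.

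First I would pin down the true support of $\binom{s}{i'}\binom{s}{i-i'}$: it vanishes unless $\max(0,i-s) \le i' \le \min(s,i)$. In the upper-left triangle of $Q_3$ we have $s \le j \le i \le 2s$, so $i \ge s$ forces $\min(s,i)=s$ and $\max(0,i-s)=i-s$. The nonzero support is therefore $[i-s,\,s]$.

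Next I would compare this with the stated range $[j-s,\,s]$. Because $i \ge j$, we have $j-s \le i-s$, so the stated range \emph{contains} the support, and on the extra indices $j-s \le i' < i-s$ the factor $\binom{s}{i-i'}$ vanishes because $i-i'>s$. Moreover $j \ge s$ ensures $j-s \ge 0$, so $\binom{s}{i'}$ is well-defined throughout. The sum is therefore unaffected by its nonstandard lower bound, and Chu--Vandermonde (Eq.~\eqref{E:binom_iden_V}) yields
\[
  \sum_{i'=j-s}^{s}\binom{s}{i'}\binom{s}{i-i'}
  \;=\;\sum_{i'=i-s}^{s}\binom{s}{i'}\binom{s}{i-i'}
  \;=\;\binom{2s}{i}.
\]
Multiplying by $(-1)^i$ completes the identity.

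There is no genuine obstacle: the only subtlety is the bookkeeping of the effective summation bounds. It is precisely the inequality $i \ge j$ that pushes the awkward lower bound $j-s$ arising from the geometry of the quadrant into the region where the summand already vanishes. This is in sharp contrast to the first quadrant (Section~\ref{SS:Q1}), where the bounds were genuinely non-standard and the more delicate identity of Lemma~\ref{L:binom_iden} had to be invoked term-by-term to handle each of the three pieces of $\bar{\coe}_{i,j}$.
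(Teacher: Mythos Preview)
Your proof is correct and takes essentially the same approach as the paper: observe that the condition $i\ge j$ (together with $j\ge s$ in $Q_3$) makes the summation range contain the full natural support of the convolution summand, so Chu--Vandermonde applies directly. The paper states this in one line (``Since $i\ge j$, the boundary conditions in the sum are standard, and Chu--Vandermonde identity Eq.~\eqref{E:binom_iden_V} applies''); you have simply unpacked that remark carefully.
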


The sum of the second term of $\coe_{2s-i, s-j'}$ in the upper-left triangle of the third \qu $Q_3$ takes
the form of
\[
  \btwo_{i,j}^{(2)} =  (-1)^{s+i+j+1}  \binom{2s}{i} \sum_{j'=0}^{i-1}  (-1)^{j'}  \binom{i-j'-1} {s}  \binom{s} {j-j'} .
\]

Let first prove the following. 
\begin{lemma} \label{L:Q3_S10}
  \begin{equation} \label{E:Q3_S10c}
    \sum_{j'=0}^{i-1}  (-1)^{j'}  \binom{i-j'-1} {s}  \binom{s} {j-j'} =
  \begin{dcases}
    (-1)^{s+j} ,  & \quad s \le j < i, \\
    0, & \text{otherwise} .
    \end{dcases}    
  \end{equation}
  \end{lemma}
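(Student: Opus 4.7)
The plan is to reduce the sum to a standard alternating binomial identity by a single change of variables. Substituting $\ell = j - j'$ rewrites the left-hand side as
\[
(-1)^{j}\sum_{\ell}(-1)^{\ell}\binom{(i-j-1)+\ell}{s}\binom{s}{\ell},
\]
where $\ell$ effectively runs over the support of $\binom{s}{\ell}$, namely $\max(0,j-i+1)\le \ell\le \min(s,j)$.

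For the main case $s\le j<i$, the hypothesis $j<i$ forces the lower cutoff down to $0$ (since $j-i+1\le 0$) and $j\ge s$ forces the upper cutoff up to $s$, so the effective range collapses to exactly $\ell=0,1,\dots,s$. I would then invoke the classical identity
\[
\sum_{\ell=0}^{s}(-1)^{\ell}\binom{s}{\ell}\binom{x+\ell}{s}=(-1)^{s},
\]
which is most quickly justified by observing that $\binom{x+\ell}{s}$ is a polynomial in $\ell$ of degree $s$ with leading coefficient $1/s!$, so its $s$-th forward difference at $\ell=0$ equals $1$; this is precisely the asserted sum, up to the sign $(-1)^{s}$. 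Equivalently, this follows from a one-line application of Zeilberger's algorithm, in keeping with the paper's toolkit. Substituting $x=i-j-1\ge 0$ and restoring the prefactor $(-1)^{j}$ yields $(-1)^{s+j}$, as claimed.

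For the ``otherwise'' branch I would argue directly on the support of the summand. When $i\le s$, every term carries the factor $\binom{i-j'-1}{s}=0$, since $i-j'-1<s$ for each $j'\ge 0$, so the sum is identically zero. When $j\ge i$, intersecting the three constraints $0\le j'\le i-1$, $0\le j-j'\le s$, and $j'\le i-s-1$ (the last being required for $\binom{i-j'-1}{s}\ne 0$) yields an empty feasible set, so the sum again vanishes. In the actual use of the lemma, the upper-left triangle of $Q_{3}$ enforces $s\le j<i\le 2s$, so it is the first clause that is always invoked.

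The principal obstacle is not the sum evaluation itself but the bookkeeping on the support of the summand: one must verify that, after the substitution $\ell=j-j'$, the active summation window coincides exactly with $[0,s]$, because this alignment is what opens the door to the classical identity and produces the clean closed form. Once the range is pinned down, the finite-difference identity discharges the lemma in a single step.
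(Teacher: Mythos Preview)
Your proof is correct and takes a genuinely different route from the paper's. The paper runs Zeilberger's algorithm on the summand (with $j$ as the recurrence parameter) to obtain the first-order relation $S(j)+S(j+1)=0$ for $j<i-1$, then anchors the recursion by evaluating $S(i-1)$ directly (the summand collapses to a single term at $j'=i-1-s$), and finally handles $j\ge i$ by the same support analysis you give. You instead make the substitution $\ell=j-j'$, observe that for $s\le j<i$ the effective range becomes exactly $\ell=0,\dots,s$, and then invoke the $s$-th forward-difference identity $\sum_{\ell=0}^{s}(-1)^{\ell}\binom{s}{\ell}\binom{x+\ell}{s}=(-1)^{s}$. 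This is more elementary and more transparent: it explains in one stroke why the answer is independent of $i$ and $j$ once the range has been pinned down, whereas the paper's recurrence approach obtains that independence only implicitly through the alternating sign pattern. The paper's method, on the other hand, fits seamlessly into its overall toolkit and requires no separate binomial lemma. Your treatment of the ``otherwise'' case ($j\ge i$, and the trivial case $i\le s$) matches the paper's support argument essentially verbatim; as you note, the application in $Q_3$ only ever needs the clause $s\le j<i$.
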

  
\begin{proof}
  Denote the sum on the left as $S(j)$ and the summand as $\F(j, j')$.
  \Ze's algorithm shows that the summand satisfies the first order homogeneous recurrence
  \begin{equation} \label{E:Q3_S20_recur}
    (i-j-1) \F(j, j') + (i-j-1) \F(j+1, j') = 0, 
  \end{equation}
  with certificate
  \[
    \R = \frac{ (i-j') (-s+j-j') } {j+1-j' } .
  \]
  Note that the %highest order
  coefficient polynomial of  $F(j+1, j')$ in the recurrence
  has a zero at $j=i-1$,
  so the recurrence breaks up
  at the term $\Sum(i)$.

  For $j \le i-1$,
  let evaluate $\Sum(i-1)$.
  In this case the summand becomes
  \[
     (-1)^{j'}  \binom{i-j'-1} {s}  \binom{s} {i-j'-1},
   \]
   which vanishes except when $j'=i-1-s$,
   hence the sum is evaluated as $S(i-1) = (-1)^{s+i+1}$.
   From the recurrence Eq.~\eqref{E:Q3_S20_recur}
   $S(j) = (-1)^{s+j}$ when $j<i$.
   
   When $j \ge i$,
   the condition for the first binomial in the summand to be nonzero
   is
   \[
     j' \le i-s-1,
   \]
   while 
   the condition for the second binomial to be nonzero is
   \[
     j' \ge j -s \ge i - s,
   \]
   which show that the summand is zero for all $j'$,
   hence $S(j) = 0$ for $j \ge i$.
   
\end{proof}

From this identity we have
\begin{coro}
  In the upper-left triangle of the third \qu $Q_3$,
  the contribution of the second term of $\coe_{i,j}$ from \PTwo is
  \begin{equation} \label{E:Q3_S10}
  \btwo_{i,j}^{(2)} =
    (-1)^{i+1} \binom{2s}{i} .
    \end{equation}
  \end{coro}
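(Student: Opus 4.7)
The plan is to apply Lemma~\ref{L:Q3_S10} directly to the explicit expression already derived for $\btwo_{i,j}^{(2)}$. Recall from the discussion preceding the Corollary that in the upper-left triangle of $Q_3$ we have
\[
  \btwo_{i,j}^{(2)} =  (-1)^{s+i+j+1}  \binom{2s}{i} \sum_{j'=0}^{i-1}  (-1)^{j'}  \binom{i-j'-1} {s}  \binom{s} {j-j'} ,
\]
so the entire content of the Corollary is to evaluate the inner sum in the regime relevant to this triangle and to track the sign.

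First I would verify that the indices $(i,j)$ ranging over the strict upper-left triangle of $Q_3$ satisfy the hypothesis $s \le j < i$ of the first case of Eq.~\eqref{E:Q3_S10c}. Since the corners of $Q_3$ are $(s,s),(2s,s),(2s,2s),(s,2s)$, we have $s \le i, j \le 2s$, and the strict upper-left triangle imposes $i > j$; combined this gives $s \le j < i \le 2s$, which is exactly the nonzero regime of Lemma~\ref{L:Q3_S10}. Hence the inner sum evaluates to $(-1)^{s+j}$.

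Substituting and collecting signs, $(-1)^{s+i+j+1}(-1)^{s+j} = (-1)^{2s + i + 2j + 1} = (-1)^{i+1}$, which yields $\btwo_{i,j}^{(2)} = (-1)^{i+1}\binom{2s}{i}$, as claimed. There is no real obstacle here: all of the work has been absorbed into the proof of Lemma~\ref{L:Q3_S10} (which handled the Zeilberger certificate and the boundary cases of the sum). The only thing to be careful about is that the boundary $j = i-1$ of the triangle is correctly covered by the inequality $j < i$ in the lemma, so no special-case analysis of the diagonal (treated separately in section~\ref{SS:Q3_diag}) is needed here.
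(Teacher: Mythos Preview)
Your proof is correct and follows exactly the paper's approach: the paper simply writes ``From this identity we have'' the Corollary, meaning one substitutes the value $(-1)^{s+j}$ from Lemma~\ref{L:Q3_S10} into the displayed expression for $\btwo_{i,j}^{(2)}$ and simplifies the sign. Your additional care in checking that the strict upper-left triangle condition $i>j$ with $s\le j$ places $(i,j)$ in the nonzero branch of Eq.~\eqref{E:Q3_S10c} is appropriate and matches the paper's implicit reasoning.
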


\begin{comment}
\begin{equation} 
  (-1)^{s+i+j+1}  \binom{2s}{i} \sum_{j'=0}^{i-1}  (-1)^{j'}  \binom{i-j'-1} {s}  \binom{s} {j-j'} =
  \begin{cases}
    (-1)^{i+1} \binom{2s}{i},  & \quad s \le j < i, \\
    0, & \text{otherwise} .
    \end{cases}
\end{equation}
\end{comment}

By Eq.~\eqref{E:Q3_sp2} and Eq.~\eqref{E:Q3_S10} we prove that
\begin{coro} \label{C:Q3_P1hP2_2}
  In the upper-left triangle of the third \qu $Q_3$,
  \[
    \bone_{i,j}^{h} + \btwo_{i,j}^{(2)} = 0.
  \]
\end{coro}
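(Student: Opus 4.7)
The plan is to simply add the two explicit formulas that have been established immediately above. Lemma~\ref{L:Q3_sp2} gives the horizontal \POne{} contribution in the upper-left triangle as $\bone_{i,j}^{h} = (-1)^{i} \binom{2s}{i}$, obtained from the Chu-Vandermonde identity applied to $\sum_{i'=j-s}^{s} \binom{s}{i'} \binom{s}{i-i'}$, whose summation range is standard precisely because $i \ge j$ in this triangle. The corollary of Lemma~\ref{L:Q3_S10} gives the second-term \PTwo{} contribution as $\btwo_{i,j}^{(2)} = (-1)^{i+1} \binom{2s}{i}$.

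Adding these yields
\[
  \bone_{i,j}^{h} + \btwo_{i,j}^{(2)} = \bigl( (-1)^{i} + (-1)^{i+1} \bigr) \binom{2s}{i} = 0,
\]
which is the statement of the corollary.

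There is no real obstacle at this final step: all of the nontrivial work has already been absorbed upstream. The coefficient sequence $\coe_{s,i,j}$ defined in Section~\ref{S:seq} was engineered precisely so that this cancellation occurs by construction — the second term in the three-term decomposition of $\coe_{2s-i,s-j}$, when convolved through the lattice-strip recurrence attached at $(i,j)$ in \PTwo{}, produces exactly the negative of the horizontal \POne{} contribution in the upper-left triangle of $Q_3$. The substantive difficulty was concentrated in Lemma~\ref{L:Q3_S10}, where Zeilberger's algorithm supplied a first-order recurrence in $j$ for the inner sum $\sum_{j'} (-1)^{j'} \binom{i-j'-1}{s} \binom{s}{j-j'}$, whose value at the base case $j = i-1$ collapses to the single surviving term $j'=i-1-s$ and evaluates to $(-1)^{s+i+1}$; propagating via the recurrence then yields the closed form $(-1)^{s+j}$ for $s \le j < i$ that feeds directly into $\btwo_{i,j}^{(2)}$. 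Once these two pieces are in hand, the corollary itself is a one-line sign check.
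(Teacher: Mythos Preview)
Your proposal is correct and follows exactly the paper's own argument: the paper simply states ``By Eq.~\eqref{E:Q3_sp2} and Eq.~\eqref{E:Q3_S10} we prove that'' before asserting the corollary, which amounts precisely to adding $(-1)^i\binom{2s}{i}$ and $(-1)^{i+1}\binom{2s}{i}$ as you do. Your additional commentary accurately locates the substantive work upstream in Lemma~\ref{L:Q3_S10}.
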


\subsubsection{The contributions of $\coe_{2s-i, s-j'}^{(3)}$ vanish
} \label{ss:Q3_h2}

\paragraph{Outline of the proof}
In this section we prove $\btwo_{i,j}^{3v} = 0$ in the strict upper-left triangle of the third \qu.
Compared to the proofs of contributions from $\coe_{2s-i, s-j'}^{(1)}$ and
$\coe_{2s-i, s-j'}^{(2)}$, this proof is more complicated, since a double sum is involved.
The third term $\coe_{2s-i, s-j'}^{(3)}$ itself is a sum,
thus by Eq.~\eqref{E:Q3_v} the contributions from the third term in upper-left triangle where $i>j$
are represented as a double sum (Eq.~\eqref{E:Q3_double_sum} below).
To prove $\btwo_{i,j}^{3v} = 0$,
we transform the double sum into two parts,
the first part is a simple closed form expression,
and the second part is a new double sum that satisfies a simple recurrence.
We then prove that these two parts cancel each other.

To do this,
we transform the inner sum of the double sum
\[
  \btwo_{i,j}^{3v} = (-1)^s \sum_{j'=0}^{s} (-1)^{j-j'} \coe_{2s-i, s-j'}^{(3)} \binom{s}{j-j'}
\]
to two parts (Proposition~\ref{P:Q3_sum_t}).
The first part is a simple closed form expression (Lemma~\ref{L:Q3_sum_t_closed_form}),
and the second part is a new sum.
This new sum is a correction term that is needed when $j' < i-s$ (Lemma~\ref{L:Q3_correction}).
This is due to a zero in the coefficient polynomial of the lowest recurrence term,
which interrupts the recurrence in the downwards direction of recurrence on $j'$.
Some of the results of this section will be used in the section
for the fourth \qu $Q_4$.

\paragraph{Transforming the inner sum}

The explicit form of full double sum involving $\coe_{2s-i, s-j'}^{(3)}$
of $\btwo_{i,j}^v$ is
\begin{equation} \label{E:Q3_double_sum}
  (-1)^{s+i} i  \binom{2s}{ i}
  \sum_{j'=0}^{s} (-1)^{j-j'} \binom{s} {j-j'}
  \sum_{t=0}^{2s-i}  \frac{(-1)^t }{ 2s - t}   \binom{2s-i}{ t} \binom{2s-t-1-j'} { s-j'} .
\end{equation}

The explicit form of the inner sum with $t$ as the summation index is
\begin{equation} \label{E:Q3_sum_t}
  \sum_{t=0}^{2s-i}  \frac{(-1)^t }{ 2s - t}   \binom{2s-i}{ t} \binom{2s-t-1-j'} { s-j'} .
\end{equation}

\Ze's algorithm shows the summand obeys a second order recurrence on $j'$
\begin{multline} \label{E:Q3_sum_t_recur}
  -(j'-s)(i-j'-s-1) \F(j',t)
  + (2ij'-i s-2 j'^2+2i-5j'+s-3) \F(j'+1,t) \\
  -(j'+2)(i-j'-2) \F(j'+2,t) = 0,
\end{multline}
with the certificate
\[
  R =  \frac{ t(j'-s)(2s-t) } {(-2s+t+1+j') } .
\]

Note that the lowest coefficient polynomial of the recurrence
(the coefficient of $\F(j',t)$)
has a zero at $j'=i-s-1$, which is within the range of summation in the outer sum of variable $j'$.

Below we show that when $j' \ge i-s$,
the inner sum has a closed form.

\begin{lemma} \label{L:Q3_sum_t_closed_form}
  When $j' \ge i-s$,
  the sum of Eq.~\eqref{E:Q3_sum_t} has the following closed form:
  \begin{equation} \label{E:Q3_sum_t_closed_form}
    \sum_{t=0}^{2s-i}  \frac{(-1)^t }{ 2s - t}   \binom{2s-i}{ t} \binom{2s-t-1-j'} { s-j'}
    =
    \frac{ (-1)^{s+i+j'}}{i}
    { \dbinom{s}{ j'} } { \dbinom{2s} {i} }^{-1}.
  \end{equation}
\end{lemma}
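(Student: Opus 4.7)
The plan is to prove the identity by downward induction on $j'$, using the second-order recurrence Eq.~\eqref{E:Q3_sum_t_recur} that $S(j')$ inherits from the summand. The three ingredients are: (i) $S(j')$ itself satisfies the recurrence; (ii) the proposed closed form satisfies the same recurrence; (iii) both agree on two adjacent base values at the top of the range.

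To establish (i), I would sum the Zeilberger telescoping identity $\sum_k a_k(j')\, F(j'+k,t) = G(j',t+1) - G(j',t)$ with $G = R F$ over $t = 0, \ldots, 2s-i$. The boundary term $G(j',0)$ vanishes because the certificate $R$ carries an explicit factor $t$, and $G(j', 2s-i+1)$ vanishes because the factor $\binom{2s-i}{t}$ in $F$ kills it; the apparent pole of $R$ at $t = 2s-j'-1$ is cancelled by $\binom{2s-t-1-j'}{s-j'}$ whenever $j' < s$, so $G$ is well defined throughout the summation range. Hence $S(j')$ satisfies the same homogeneous second-order recurrence as the summand.

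For (ii), write $g(j') = c\,(-1)^{j'}\binom{s}{j'}$ with $c = (-1)^{s+i}/[i\binom{2s}{i}]$. Using the ratios $\binom{s}{j'+1}/\binom{s}{j'} = (s-j')/(j'+1)$ and $\binom{s}{j'+2}/\binom{s}{j'} = (s-j')(s-j'-1)/[(j'+1)(j'+2)]$ and clearing common factors (including $(s-j')$, valid since the induction stays strictly below $j' = s$), the recurrence for $g$ collapses to the polynomial identity
\[
(i-j'-s-1)(j'+1) = (2ij'-is-2j'^2+2i-5j'+s-3) + (i-j'-2)(s-j'-1),
\]
which is verified by direct expansion. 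This routine but tedious polynomial calculation is the main computational obstacle.

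For (iii), I would take the base cases $j' = s+1$ and $j' = s$. At $j' = s+1$ both sides vanish: $g(s+1) = 0$ since $\binom{s}{s+1} = 0$, and $S(s+1) = 0$ since $\binom{s-t-2}{-1} = 0$ for every $t$. At $j' = s$ the inner binomial reduces to $1$, so
\[
S(s) = \sum_{t=0}^{2s-i} \frac{(-1)^t}{2s-t} \binom{2s-i}{t}.
\]
The substitution $t \mapsto 2s-i-t$ turns this into $(-1)^i \sum_{t=0}^{2s-i} (-1)^t \binom{2s-i}{t}/(i+t)$, which by the classical partial-fractions identity $\sum_{t=0}^n (-1)^t \binom{n}{t}/(a+t) = 1/[a\binom{a+n}{n}]$ equals $(-1)^i/[i\binom{2s}{i}] = g(s)$, as required. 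The leading coefficient $-(j'-s)(i-j'-s-1)$ of the recurrence is nonzero throughout $[i-s,\, s-1]$ (since $j' < s$ and $j' > i-s-1$ there), so downward induction from the base cases $j' = s, s+1$ uniquely determines $S(j') = g(j')$ for all $i-s \le j' \le s$.
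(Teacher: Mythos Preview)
Your argument is correct and follows the same overall strategy as the paper: verify that both the sum $S(j')$ and the proposed closed form $g(j')$ satisfy the second-order recurrence Eq.~\eqref{E:Q3_sum_t_recur}, then match two consecutive initial values.

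The one noteworthy difference is your choice of base cases. The paper checks $j'=s$ and $j'=s-1$, each by running a separate first-order Zeilberger recurrence in the parameter $i$ and solving it explicitly. You instead take $j'=s+1$ (where both sides vanish trivially since $\binom{s}{s+1}=0$ and $\binom{\cdot}{-1}=0$) and $j'=s$, handling the latter by the substitution $t\mapsto 2s-i-t$ together with the classical identity $\sum_{t=0}^n(-1)^t\binom{n}{t}/(a+t)=1/[a\binom{a+n}{n}]$. This shortcut avoids two auxiliary creative-telescoping computations and is a cleaner way to anchor the induction. Your observation that the leading coefficient $-(j'-s)(i-j'-s-1)$ is nonzero on $[i-s,s-1]$, together with the cancellation of the apparent pole of $R$ against the top factor of $\binom{2s-t-1-j'}{s-j'}$, correctly justifies that the downward induction reaches every $j'\ge i-s$.
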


\begin{proof}
  It is easy to check that
  the closed form on the right side of Eq.\eqref{E:Q3_sum_t_closed_form}
  obeys the same second order recurrence Eq.~\eqref{E:Q3_sum_t_recur}.
  We will show that both sides have the same values
  at $j'=s$ and $j'=s-1$.

  When $j'=s$,
  the summand becomes
  \[
     \F(i, t) = \frac{(-1)^t }{ 2s - t} \binom{2s-i}{ t}.
   \]
   The summand obeys the first order homogeneous recurrence on $i$
   \[
     i \F(i, t) + (-i+2s) \F(i+1, t) = 0,
   \]
   with the certificate
   \[
    \R = \frac{(2s-t)t}{i-2s} .
  \]
  The sum can be evaluated from the recurrence as
  \[
    \frac{(-1)^i } {i}
    { \dbinom{2s}{ i}}^{-1},
  \]
  which equals the right hand side of Eq.~\eqref{E:Q3_sum_t_closed_form} when $j'=s$.

  For $j'=s-1$,
  the summand is
  \[
    \F(i, t) = (-1)^t  \frac{s-t}{ 2s - t} \binom{2s-i}{ t}.
  \]
  which obeys the first order homogeneous recurrence
  \[
    -(i-2s+1) F(i, t) + (i-2s+1)(i-2s) \F(i+1, t) = 0, 
  \]
  with the certificate
  \[
    \R = \frac{ -(2s-t)t(i s-2s^2-s+2st-i t+i) } {(s-t)(i-2s) }.
  \]
  The recurrence is solved to yield
  \[
    \frac{(-1)^{i+1}  s} { i }
    \dbinom{2s}{ i}^{-1},
  \]
  which agrees with the right hand side of Eq.~\eqref{E:Q3_sum_t_closed_form} when $j'=s-1$,
  hence the identity of Eq.~\eqref{E:Q3_sum_t} is proved.
\end{proof}

The above Lemma only holds when $j' \ge i-s$.
For other values of $j'$ that we need in the sum of Eq.~\eqref{E:Q3_double_sum},
that is, $0 \le j' \le i-s -1$,
a correction term is needed.
This is due to a zero in the lowest coefficient polynomial of
the recurrence Eq.~\eqref{E:Q3_sum_t_recur}.
The coefficient polynomial of $\F(j',t)$ vanishes at
\[
  j'=i-s-1.
\]
Because of this zero, the recurrence cannot go on without ambiguity
on the downshift direction of $j'$.

The correction term is given in the following Lemma.
\begin{lemma}[The correction term] \label{L:Q3_correction}
  When $j' < i-s$, the correction term is
  \begin{equation}  \label{E:Q3_correction}
    \sum_{t=1}^{s-j'}
     \frac{ (-1)^{t+1}}{t}
     \dbinom{i-j'-1}{ s+t-1}  \dbinom{s}{ t-1}   {\dbinom{s-j'} {t} }^{-1}.    
  \end{equation}
\end{lemma}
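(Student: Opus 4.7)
The plan is to establish, for $0 \le j' \le i-s-1$, the identity
\[
  \Sum(j') = \tilde\Sum(j') + C(j'),
\]
where $\Sum(j')$ is the inner sum of Eq.~\eqref{E:Q3_sum_t}, $\tilde\Sum(j') = \frac{(-1)^{s+i+j'}}{i}\binom{s}{j'}\binom{2s}{i}^{-1}$ is the extrapolation of the closed form of Lemma~\ref{L:Q3_sum_t_closed_form} past its range of validity, and $C(j')$ is the claimed correction of Eq.~\eqref{E:Q3_correction}. Proving this identity pins down $\Sum(j')$ in the regime $j' < i-s$ and makes the correction term precise.

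My first step is to observe that both sides satisfy the same second-order recurrence Eq.~\eqref{E:Q3_sum_t_recur} on $j'$ away from the critical value $j' = i-s-1$. The sum $\Sum$ satisfies the recurrence by the \Ze{} certificate already given. The extrapolated closed form $\tilde\Sum$ satisfies it because the verification of Lemma~\ref{L:Q3_sum_t_closed_form} is purely algebraic and does not use $j' \ge i-s$. It therefore suffices to show that $C(j')$ itself satisfies the homogeneous recurrence Eq.~\eqref{E:Q3_sum_t_recur} for $j' \le i-s-2$. I would establish this by applying \Ze-algorithm to the summand of $C(j')$ (with $t$ as summation index and $j'$ as parameter) and producing a certificate; since the recurrence operator is linear, $\tilde\Sum + C$ then satisfies the same recurrence as $\Sum$ throughout $j' \le i-s-2$.

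Next I need to pin down two initial conditions. At the degenerate point $j' = i-s-1$, the coefficient $-(j'-s)(i-j'-s-1)$ of $\F(j',t)$ vanishes, so the recurrence collapses into a relation among $\Sum(i-s)$ and $\Sum(i-s+1)$ already known to hold from Lemma~\ref{L:Q3_sum_t_closed_form}. The actual value of $\Sum(i-s-1)$ is therefore independent information and must be computed directly, for instance by \Go-algorithm or by recognizing a terminating hypergeometric identity. I would then verify that $\Sum(i-s-1) - \tilde\Sum(i-s-1) = C(i-s-1)$. One more initial value at $j' = i-s-2$ is obtained from the recurrence applied at $j' = i-s-2$, now with nonvanishing leading coefficient, which expresses $\Sum(i-s-2)$ in terms of $\Sum(i-s-1)$ and $\Sum(i-s)$; agreement with $\tilde\Sum(i-s-2) + C(i-s-2)$ is then mechanical.

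The main obstacle is the direct evaluation at $j' = i-s-1$ and the transparent identification of the form of $C(j')$. The reciprocal binomial $\binom{s-j'}{t}^{-1}$ in the correction is the characteristic fingerprint of a Gosper \anti{} whose certificate has a simple pole inside the summation range: the \Ze{} certificate $\R = t(j'-s)(2s-t)/(j'-2s+t+1)$ has a pole at $t = 2s-j'-1$, which falls inside $0 \le t \le 2s-i$ exactly when $j' \le i-s-1$. I would carefully track how this pole converts the telescoping sum of the \Ze{} identity into an uncancelled boundary contribution and verify that this contribution matches $C(j')$ term by term. As a safer fallback, I would prove the identity by induction on $i-s-j'$, handling the base case $j' = i-s-1$ via \Go's algorithm and the inductive step via the already established recurrence for $\tilde\Sum + C$.
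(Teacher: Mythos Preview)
Your proposal is correct and follows the same overall strategy as the paper: verify that $\tilde\Sum + C$ satisfies the second-order recurrence Eq.~\eqref{E:Q3_sum_t_recur}, and anchor it with the direct evaluation at the breakpoint $j'=i-s-1$ (this is exactly the content of the paper's Lemma~\ref{L:Q3_breakpoint}) together with the already-known value at $j'=i-s$.

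The one genuine difference is in how the recurrence for $C(j')$ itself is obtained. You propose to run \Ze's algorithm on the summand of $C$ with $j'$ as parameter and expect the second-order homogeneous operator Eq.~\eqref{E:Q3_sum_t_recur} to come out directly. The paper instead finds that \Ze{} returns a \emph{first-order inhomogeneous} recurrence for $C$ (Eq.~\eqref{E:Q3_corr_recur}), with inhomogeneous term proportional to $\binom{i-j'-1}{s}/(i-j'-1)$, and then checks separately that any solution of this first-order relation also satisfies the second-order recurrence Eq.~\eqref{E:Q3_sum_t_recur}. Both routes land in the same place; the paper's detour through the first-order relation is simply what the algorithm hands back first, and it has the side benefit that the single nontrivial initial value $C(i-s-1)=1/(2s-i+1)$ (only one term of the sum survives there) determines everything. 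The paper also records explicitly that $C(j')=0$ for $j'\ge i-s$ (Lemma~\ref{L:Q3_correction_zero}), which you are using implicitly when you take $C(i-s)=0$ as one of your two initial values. Your pole-tracking discussion of the certificate $\R$ is a reasonable heuristic for \emph{discovering} the shape of $C$, but neither you nor the paper needs it for the verification.
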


To prove the Lemma, we need first to show that
the value of the correction term when $j'=i-s-1$,
plus the value of the right side of Eq.~\eqref{E:Q3_sum_t_closed_form}
evaluated at  $j'=i-s-1$,
is the correct value of the original sum (Eq.\eqref{E:Q3_sum_t}).
This is done in Lemma~\ref{L:Q3_breakpoint}.

 \begin{lemma}[Value at $j'=i-s-1$] \label{L:Q3_breakpoint}
   When $j'=i-s-1$,
   the original sum of Eq.~\eqref{E:Q3_sum_t} equals to
   \begin{equation} \label{E:Q3_corr_is1}
     \frac{1} {2s -i +1} +
     \frac{(-1)^{2i-1} } {i}
     \dbinom{s}{ -s+i-1}  { \dbinom{2s} {i} }^{-1}.
   \end{equation}
   where the first term is the value of the correction term Eq.~\eqref{E:Q3_correction} at $j'=i-s-1$,
   and the second term is the value of the right side of Eq.~\eqref{E:Q3_sum_t_closed_form}
   at $j'=i-s-1$.
 \end{lemma}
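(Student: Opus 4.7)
The plan is to prove the identity by directly evaluating the substituted sum and comparing it term by term with the right-hand side.

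First I would substitute $j' = i-s-1$ into Eq.~\eqref{E:Q3_sum_t} and define
\[
  S(i) := \sum_{t=0}^{2s-i} \frac{(-1)^t}{2s-t}\binom{2s-i}{t}\binom{3s-t-i}{2s-i+1}.
\]
I would then verify the two identifications made in the statement: plugging $j' = i-s-1$ into Eq.~\eqref{E:Q3_correction} turns $\binom{i-j'-1}{s+t-1}$ into $\binom{s}{s+t-1}$, which vanishes for every $t \geq 2$, so only the $t=1$ term survives and it evaluates to $\frac{1}{2s-i+1}$; and plugging $j' = i-s-1$ into the right-hand side of Eq.~\eqref{E:Q3_sum_t_closed_form} directly yields $\frac{(-1)^{2i-1}}{i}\binom{s}{i-s-1}\binom{2s}{i}^{-1}$.

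What remains is therefore the single-variable identity
\[
  S(i) = \frac{1}{2s-i+1} - \frac{1}{i}\binom{s}{i-s-1}\binom{2s}{i}^{-1}.
\]
For this I would hold $s$ fixed, treat $i$ as the free parameter, and apply Zeilberger's algorithm to the summand of $S(i)$ in $i$. Since $\binom{2s-i}{t}$ vanishes for $t > 2s-i$, the variable upper limit causes no difficulty, and the algorithm produces a low-order inhomogeneous recurrence for $S(i)$ whose inhomogeneous part is the telescoping residue of the certificate at $t=0$. I would then verify, by routine manipulation of Pascal-type identities such as $\binom{2s}{i+1}/\binom{2s}{i} = (2s-i)/(i+1)$ and $\binom{s}{i-s}/\binom{s}{i-s-1} = (2s-i)/(i-s)$, that the proposed right-hand side satisfies this same recurrence, treating its two summands separately. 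A boundary check at $i = 2s$ fixes the unique solution: the sum collapses to its single $t=0$ term, $S(2s) = \frac{1}{2s}\binom{s}{1} = \frac{1}{2}$, while the claimed closed form evaluates to $1 - \frac{s}{2s} = \frac{1}{2}$.

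The main obstacle will be matching the inhomogeneous term produced by Zeilberger against the two-summand right-hand side: because $\frac{1}{2s-i+1}$ is not hypergeometric in $i$ in the same manner as $\frac{1}{i}\binom{s}{i-s-1}\binom{2s}{i}^{-1}$, the verification needs to shift each piece separately under $i \mapsto i+1$ and combine the resulting rational functions over a common denominator before comparing with the inhomogeneous term; this bookkeeping is elementary but delicate.
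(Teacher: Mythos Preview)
Your strategy is the same as the paper's: apply Zeilberger's algorithm to the substituted sum with $i$ as the free parameter, show that the claimed closed form satisfies the resulting recurrence, and match initial values. Your preliminary identifications (the correction term collapsing to $1/(2s-i+1)$ and the closed form evaluating to the second summand) are also exactly what the paper records.

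There is, however, a concrete gap in your plan. The paper runs Zeilberger on this summand and obtains a \emph{second-order homogeneous} recurrence in $i$,
\[
(i-2s-1)\,i\,F(i,t) - (2i-s+1)(i-2s)\,F(i+1,t) + (i-s+1)(i-2s+1)\,F(i+2,t)=0,
\]
with a certificate whose value at $t=0$ is zero (the factor $t$ in the numerator kills it), so there is no inhomogeneous residue. This is consistent with the fact that your target closed form is a sum of two distinct hypergeometric terms in $i$ and therefore cannot satisfy a first-order recurrence with polynomial coefficients. Consequently a single boundary check at $i=2s$ does \emph{not} fix the solution uniquely; the paper checks both $i=2s$ and $i=2s-1$. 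Your plan as written (``a boundary check at $i=2s$ fixes the unique solution'') would leave the argument incomplete. The fix is minor: compute the sum at $i=2s-1$ as well (only the terms $t=0,1$ survive, giving $\tfrac{1}{4}\cdot\tfrac{3s-1}{2s-1}$) and verify the closed form matches.
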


 \begin{proof}

   When $j'=i-s-1$,
   the  summand of the original sum of Eq.~\eqref{E:Q3_sum_t} obeys a homogeneous second order recurrence
     \[
       (i-2s-1)i \F(i, t)  -(2 i-s+1) (i-2s) \F(i+1, t) + (i-s+1)(i-2s+1)  \F(i+2, t) = 0,
     \]
      with the certificate
      \[
        -\frac{ (i-2s-1)s(2 s-t)t } {(i-2s)(-3s+t+i) } .
      \]

      It can be shown that expression in Eq.~\eqref{E:Q3_corr_is1} obeys this recurrence
      by direct calculation.

      For initial conditions,
      when $i=2s$,
      the first binomial coefficient of the summand of the original sum vanishes for all values of
      $t$ except when $t=0$.
      In this case, the sum takes a value of $1/2$,
      which agrees with the expression Eq.~\eqref{E:Q3_corr_is1}.

      when $i=2s-1$,
      the first binomial coefficient of the summand of the original sum vanishes for all values of
      $t$ except when $t=0$ or $t=1$.
      The sum of these two terms are
      \[
        \frac{1}{4}
        \frac{3s-1} {2s-1} ,
      \]
      which agrees with the expression Eq.~\eqref{E:Q3_corr_is1}
      with direct substitutions.
      This proves the Lemma.
   \end{proof}

   Next we prove that the correction term Eq.\eqref{E:Q3_correction} vanishes
   when $j' \ge i-s$.
 \begin{lemma}[The correction term vanishes when $j' \ge i-s$]  \label{L:Q3_correction_zero}
   When $j' \ge i-s$,
   the correction term Eq.\eqref{E:Q3_correction} vanishes.
 \end{lemma}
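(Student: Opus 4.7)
The plan is to observe that the lemma reduces to a range check on a single binomial coefficient, with no analytic work required. Specifically, I would argue that under the hypothesis $j' \ge i - s$, together with the implicit context $s \le i \le 2s$ of the upper-left triangle of $Q_3$, the first binomial factor $\binom{i-j'-1}{s+t-1}$ in the summand of Eq.~\eqref{E:Q3_correction} vanishes for every $t$ in the summation range $1 \le t \le s-j'$, so the entire sum is zero term by term.

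To carry this out, first I would dispose of the vacuous case $j' \ge s$: then the summation range $1 \le t \le s-j'$ is empty and the correction term is zero by the usual empty-sum convention. It then suffices to treat $0 \le j' \le s-1$, where I would combine $j' \ge i - s$ with $i \ge s$ (to conclude $j' \ge 0$) and $j' \le s-1 < s \le i$ (to conclude $j' < i$, hence $i - j' - 1 \ge 0$). Together these pin down the two-sided bound
\[
  0 \;\le\; i - j' - 1 \;\le\; s - 1 .
\]

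Next, for any $t \ge 1$ we have $s + t - 1 \ge s > s - 1 \ge i - j' - 1 \ge 0$, so by the standard convention that $\binom{n}{k} = 0$ whenever $0 \le n < k$, the first binomial coefficient vanishes, and with it every summand of Eq.~\eqref{E:Q3_correction}. I do not anticipate any genuine obstacle here: the argument is essentially bookkeeping of the ranges of $i$, $j'$, and $t$. The substantive content of showing $\btwo_{i,j}^{3v} = 0$ lies elsewhere, namely in Lemma~\ref{L:Q3_sum_t_closed_form} and in pairing the correction term against the recurrence-satisfying double sum described in Proposition~\ref{P:Q3_sum_t}; the present lemma only certifies that the correction is supported on the regime $j' < i - s$, as implicitly required by that two-part decomposition of the inner sum.
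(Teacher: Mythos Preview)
Your proposal is correct and follows essentially the same approach as the paper's proof: both arguments observe that the first binomial coefficient $\binom{i-j'-1}{s+t-1}$ in the summand vanishes for every $t\ge 1$ under the hypothesis $j'\ge i-s$, making each term of the sum zero. Your version is slightly more careful in explicitly handling the vacuous case $j'\ge s$ and in verifying $i-j'-1\ge 0$ so that the standard binomial vanishing convention applies, but the underlying idea is identical.
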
 
 
 \begin{proof}
   The first binomial coefficient in the numerator of the summand of
   Eq.\eqref{E:Q3_correction}
   vanishes when $j' > i-s-t$. Since the index of summation $t \ge 1$,
   this binomial coefficient vanishes for all the range of $t$, so is the sum.
 \end{proof}

 By the the previous two Lemmas,
 we proceed to prove the correctness of Lemma~\ref{L:Q3_correction}.
\begin{proof}[Proof of Lemma~\ref{L:Q3_correction}]
  When $j'=i-s-1$,
  the summand of Eq.\eqref{E:Q3_correction} is nonvanishing only when $t=1$,
  and
  the above correction term takes a value of
  \[
    \frac{1}{2s-i+1} .
  \]

  The summand of the correction term
  \[
    \F(j', t) =  \frac{ (-1)^{t+1}}{t}
      \dbinom{i-j'-1}{ s+t-1}  \dbinom{s}{ t-1}   {\dbinom{s-j'} {t} }^{-1}
   \]
   satisfies the first order inhomogeneous recurrence (with $j'$ as the parameter)
   \begin{equation} \label{E:Q3_corr_recur}
     (-j'+s)\F(j', t) + (j'+1) \F(j'+1, t) =
    \frac{ s(j'-s)} {(-j'+s)(i-j'-1) }   \binom{i-j'-1}{ s },
   \end{equation}
   with the certificate
   \[
     \R = \frac{ (s+t-1)(j'-s) } {i-j'-1 }.
   \]

   It can be checked that the inhomogeneous recurrence
   of Eq.~\eqref{E:Q3_corr_recur} obeys the 
   recurrence Eq.~\eqref{E:Q3_sum_t_recur} that original sum satisfies.

   The only things left are to confirm
   that the initial conditions also agree.
   In Lemma~\ref{L:Q3_correction_zero}
   we show the correction term vanishes when $j'>=i-s$,
   and in Lemma~\ref{L:Q3_breakpoint}
   we show the initial condition agrees at $j'=i-s-1$.
   By taken these results together,
   the correctness of Eq.\eqref{E:Q3_correction}
   is proved.

 \end{proof}

   Putting all above results together, we prove in the upper-left triangle of the third \qu $Q_3$,
   for $0 \le j' \le s$, the following result holds.
   \begin{prop}[Transform of the inner sum] \label{P:Q3_sum_t}
     For $0 \le j' \le s$,
     the inner sum of $\btwo_{i,j}^{3v}$ is transformed to
\begin{align}
 & \sum_{t=0}^{2s-i}  \frac{(-1)^t }{ 2s - t}   \binom{2s-i}{ t} \binom{2s-t-1-j'} { s-j'} \nonumber \\
  =&
   \frac{ (-1)^{s+i+j'}}{i}
     \dbinom{s}{ j'}  { \dbinom{2s} {i} }^{-1}
     +
     \sum_{t=1}^{s-j'}
     \frac{ (-1)^{t+1}}{t}
     \dbinom{i-j'-1}{ s+t-1}  \dbinom{s}{ t-1}   {\dbinom{s-j'} {t} }^{-1}
     \label{E:Q3_sum_t_corr}.
\end{align}
\end{prop}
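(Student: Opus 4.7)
The plan is to prove Eq.~\eqref{E:Q3_sum_t_corr} by a case analysis on $j'$ relative to the breakpoint $j' = i-s-1$ of the recurrence Eq.~\eqref{E:Q3_sum_t_recur}. This split is natural because the coefficient of $F(j',t)$ in that recurrence vanishes at $j' = i-s-1$, so the inner sum cannot be propagated as a single closed expression all the way from $j' = s$ down to $j' = 0$; the RHS of the proposition is precisely the reformulation that accounts for this breakdown.

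For the upper regime $j' \ge i-s$, I would invoke Lemma~\ref{L:Q3_sum_t_closed_form}, which identifies the LHS with the first term on the RHS, and Lemma~\ref{L:Q3_correction_zero}, which shows that the correction sum vanishes because $\binom{i-j'-1}{s+t-1} = 0$ throughout the range $t \ge 1$. Adding these two facts produces Eq.~\eqref{E:Q3_sum_t_corr} directly in this case.

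For the lower regime $0 \le j' \le i-s-1$, the strategy is to show that both sides of Eq.~\eqref{E:Q3_sum_t_corr} satisfy a common recurrence on $j'$ with matching initial data at the breakpoint. The LHS and the first term on the RHS both satisfy the second-order recurrence Eq.~\eqref{E:Q3_sum_t_recur}; by Lemma~\ref{L:Q3_correction}, the correction term satisfies the compatible first-order inhomogeneous recurrence Eq.~\eqref{E:Q3_corr_recur}, which together with the first term makes the full RHS a solution of Eq.~\eqref{E:Q3_sum_t_recur} as well. The anchoring step is Lemma~\ref{L:Q3_breakpoint}, which directly computes the LHS at $j' = i-s-1$ and verifies that it splits as the value of the first term at $j' = i-s-1$ plus the boundary value $1/(2s-i+1)$ of the correction term at that same point. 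Recurrence propagation then extends the identity downward to $j' = 0$.

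The main obstacle, already dispatched by the preceding lemmas, is handling the breakpoint itself: the closed form of Lemma~\ref{L:Q3_sum_t_closed_form} cannot be continued below $j' = i-s-1$ without an explicit correction, and pinning that correction down requires evaluating the LHS at the critical index through a secondary recurrence independent of Eq.~\eqref{E:Q3_sum_t_recur}, as carried out in Lemma~\ref{L:Q3_breakpoint}. With that anchor in place, the remainder of the argument reduces to routine recurrence matching between the LHS and the two-term RHS.
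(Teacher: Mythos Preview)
Your proposal is correct and follows essentially the same approach as the paper: the paper's proof of Proposition~\ref{P:Q3_sum_t} is simply the line ``Putting all above results together,'' and what you have written is precisely an elaboration of how Lemmas~\ref{L:Q3_sum_t_closed_form}, \ref{L:Q3_correction}, \ref{L:Q3_breakpoint}, and~\ref{L:Q3_correction_zero} combine via the breakpoint case split and recurrence propagation. One small point worth making explicit: since Eq.~\eqref{E:Q3_sum_t_recur} is second order, downward propagation below the breakpoint needs two consecutive anchors, and you are implicitly using both $j'=i-s$ (from the upper regime) and $j'=i-s-1$ (from Lemma~\ref{L:Q3_breakpoint}).
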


Denote the first term of the rhs of
Eq.~\eqref{E:Q3_sum_t_corr} by
$\sumtS(s, i, j')$.
By putting the outer summation back (Eq.~\eqref{E:Q3_double_sum}),
and by using identity Eq.~\eqref{E:binom_iden_V},
the sum of $\sumtS(s, i, j')$ can be evaluated to a closed form.
\begin{lemma} \label{L:Q3_inner_sum_1st_term}
  \[
    (-1)^{s+i+j}  i \binom{2s}{i}
      \sum_{j'=0}^{s}  (-1)^{j'}  \sumtS(s, i, j')  \binom{s}{j-j'}
    =
    (-1)^j \binom{2s} {j} .
  \]
\end{lemma}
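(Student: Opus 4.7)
The plan is to prove this lemma by direct substitution of the closed-form expression for $\sumtS(s, i, j')$ from Lemma~\ref{L:Q3_sum_t_closed_form}, followed by a sign simplification and one application of the Chu--Vandermonde identity (Eq.~\eqref{E:binom_iden_V}, which the excerpt has already granted us).

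First I would substitute
\[
   \sumtS(s, i, j') = \frac{(-1)^{s+i+j'}}{i}\binom{s}{j'}\binom{2s}{i}^{-1}
\]
into the left-hand side of the claimed identity. The prefactor $(-1)^{s+i+j} i \binom{2s}{i}$ is independent of the summation index $j'$ and can be pulled through the sum, where it meets the reciprocals $i^{-1}$ and $\binom{2s}{i}^{-1}$ coming from $\sumtS$. These factors cancel outright. Meanwhile, the sign $(-1)^{s+i+j}\cdot(-1)^{j'}\cdot(-1)^{s+i+j'}=(-1)^{2s+2i+j+2j'}=(-1)^j$ collapses to a single sign $(-1)^j$ that does not depend on $j'$.

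After this simplification, the left-hand side reduces to
\[
   (-1)^j\sum_{j'=0}^{s}\binom{s}{j'}\binom{s}{j-j'},
\]
and I would finish by invoking Chu--Vandermonde to rewrite the inner sum as $\binom{2s}{j}$, giving exactly $(-1)^j\binom{2s}{j}$, the stated right-hand side. Note that the summation range $0\le j'\le s$ is the natural range for applying Vandermonde here, since $\binom{s}{j-j'}$ vanishes outside $j-s\le j'\le j$ anyway.

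There is no real obstacle to this proof; the content of the lemma is entirely absorbed into Lemma~\ref{L:Q3_sum_t_closed_form}, whose derivation was the hard work. The only thing to be careful about is tracking the signs correctly and making sure that the substitution is legal for every $j'$ in the range $0\le j'\le s$, which is guaranteed by Proposition~\ref{P:Q3_sum_t}: the second (correction) summand in the transform of the inner sum only contributes for $j'<i-s$, and that contribution is handled separately elsewhere in the proof of $\btwo_{i,j}^{3v}=0$, so here we genuinely only see the $\sumtS(s,i,j')$ piece.
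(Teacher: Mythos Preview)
Your proof is correct and follows the same approach as the paper: substitute the closed form for $\sumtS(s,i,j')$, collapse the prefactors and signs, and apply Chu--Vandermonde (Eq.~\eqref{E:binom_iden_V}). One minor remark: the ``legality'' caveat at the end is unnecessary, since $\sumtS(s,i,j')$ is \emph{defined} in the paper to be the closed-form first term on the right of Eq.~\eqref{E:Q3_sum_t_corr}, not the original inner sum, so no range restriction on $j'$ is in play here.
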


Hence in the strict upper-left triangle of the third \qu $Q_3$ we transform
the double sum involving the third term of
$\coe_{i,j}$ into another form.
\begin{prop}[Transformed double sum] \label{P:Q3_S1X} %%% same label???
  In the strict upper-left triangle of the third \qu $Q_3$, the double summation
  of the third term of
  $\btwo^v_{i,j}$
  takes a new form of
  \begin{equation} \label{E:Q3_S1X}
    \btwo_{i,j}^{3v} =
    (-1)^j \binom{2s}{j}
    +
    \D(s, i, j),    
  \end{equation}
  with
  \begin{equation} \label{E:Q3_double_sum_new}
    \D(s, i, j) =
    (-1)^{s+i+j} (i) \binom{2s} {i}
    \sum_{j'=0}^{s}
    \sum_{t=1}^{s-j'} \frac{(-1)^{j'+t+1}}{t}
    \binom{i-j'-1}{s+t-1}
    \binom{s} {t-1}
    \binom{s}{ j-j'}
    { \dbinom{s-j'}{ t} }^{-1}.
  \end{equation}
\end{prop}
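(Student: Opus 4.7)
The plan is to prove Proposition~\ref{P:Q3_S1X} by direct substitution of the inner-sum transform of Proposition~\ref{P:Q3_sum_t} into the explicit double-sum Eq.~\eqref{E:Q3_double_sum}, and then splitting the resulting expression into the two pieces claimed in Eq.~\eqref{E:Q3_S1X}. Most of the hard analytic work has already been carried out in the preceding lemmas, so the step to this proposition is essentially an algebraic rearrangement; the plan is simply to make the bookkeeping explicit.

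The first step is to write
\[
\btwo_{i,j}^{3v} = (-1)^{s+i}\, i \binom{2s}{i} \sum_{j'=0}^{s} (-1)^{j-j'} \binom{s}{j-j'}\, I(s,i,j'),
\]
where $I(s,i,j')$ denotes the inner $t$-sum Eq.~\eqref{E:Q3_sum_t}. Applying Proposition~\ref{P:Q3_sum_t} replaces $I(s,i,j')$ by $\sumtS(s,i,j')$ plus the correction sum from the right-hand side of Eq.~\eqref{E:Q3_sum_t_corr}. By linearity of the outer $j'$-summation, $\btwo_{i,j}^{3v}$ then splits into a closed-form contribution (from $\sumtS$) and a correction contribution (from the inner $t$-sum).

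The closed-form contribution is, after re-introducing the prefactor $(-1)^{s+i}\, i \binom{2s}{i}$ and the outer factor $(-1)^{j-j'}\binom{s}{j-j'}$, precisely the left-hand side of Lemma~\ref{L:Q3_inner_sum_1st_term}, whose value is $(-1)^{j}\binom{2s}{j}$. This reproduces the first summand of Eq.~\eqref{E:Q3_S1X}. For the correction contribution, the sign exponents $s+i$, $j-j'$, and $t+1$ combine to $s+i+j+j'+t+1$ modulo $2$, matching the sign in $\D(s,i,j)$ of Eq.~\eqref{E:Q3_double_sum_new}. The remaining factors $i\binom{2s}{i}$, $\binom{s}{j-j'}$, $\binom{i-j'-1}{s+t-1}$, $\binom{s}{t-1}$, and $\binom{s-j'}{t}^{-1}$ all appear in the correct positions, and the inner summation range $1 \le t \le s-j'$ coincides exactly with the one in the definition of $\D$.

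The main obstacle I anticipate is purely mechanical: carefully threading the sign factors, the outer prefactors, and the binomial coefficients through the substitution without error. No new summation identity or \Ze-style algorithm computation should be required at this stage, since Proposition~\ref{P:Q3_sum_t} supplies the transform, Lemmas~\ref{L:Q3_correction}, \ref{L:Q3_breakpoint}, and \ref{L:Q3_correction_zero} justify the correction term together with its break-point and vanishing behavior, and Lemma~\ref{L:Q3_inner_sum_1st_term} evaluates the closed-form part in exactly the form needed for Eq.~\eqref{E:Q3_S1X}.
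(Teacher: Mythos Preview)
Your proposal is correct and follows essentially the same route as the paper: the paper presents Proposition~\ref{P:Q3_S1X} as an immediate consequence (``Hence\ldots'') of substituting the inner-sum transform of Proposition~\ref{P:Q3_sum_t} into Eq.~\eqref{E:Q3_double_sum} and then invoking Lemma~\ref{L:Q3_inner_sum_1st_term} for the closed-form piece. Your write-up simply makes the sign and factor bookkeeping explicit, which is exactly what is needed here.
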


\paragraph{Recurrence relation for the new double sum $\D(s, i, j)$}

With this new transformed form of $\btwo_{i,j}^{3v}$,
we can obtain a simple recurrence for the double sum
using the multivariable version of \Ze's algorithm~\cite{wilfAlgorithmicProofTheory1992,Apagodu2006}.
\begin{prop}[Recurrence of the double sum] \label{P:Q3_double_sum_recur}
  The double sum satisfies the following first order inhomogeneous recurrence
  \begin{equation}  \label{E:Q3_double_sum_recur}
    -\D(s, i, j) + \D(s, i+1, j) = \Inhom(s, i, j),
  \end{equation}
  where the inhomogeneous term $\Inhom(s, i, j)$
  is described in Proposition~\ref{P:Q3_inhom}.
\end{prop}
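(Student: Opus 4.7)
The plan is to apply the multivariate version of Zeilberger's algorithm (in its Wilf--Zeilberger or Apagodu--Zeilberger form) directly to the proper hypergeometric summand
\[
F(i; j', t) = \frac{(-1)^{j'+t+1}}{t}\binom{i-j'-1}{s+t-1}\binom{s}{t-1}\binom{s}{j-j'}\binom{s-j'}{t}^{-1},
\]
with the prefactor $c(i) = (-1)^{s+i+j}\,i\,\binom{2s}{i}$ pulled outside the double sum. Treating $i$ as the recurrence variable and $(j',t)$ as the summation variables, one seeks rational certificates $R_1(i;j',t)$ and $R_2(i;j',t)$ such that the creative-telescoping identity
\[
c(i+1)F(i+1;j',t) - c(i)F(i;j',t) = \Delta_{j'}\!\bigl[R_1\cdot F\bigr] + \Delta_{t}\!\bigl[R_2\cdot F\bigr] + H(i;j',t)
\]
holds as an identity of rational functions, where $\Delta$ denotes the forward-difference operator and $H$ is the leftover summand whose double sum is the desired $\Inhom(s,i,j)$ of Proposition~\ref{P:Q3_inhom}.

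First I would clear the inverse binomial by rewriting $\binom{s-j'}{t}^{-1} = t!(s-j'-t)!/(s-j')!$, so that $F$ becomes a proper hypergeometric term in $(j',t)$ and is legal input to the algorithm. Next I would run the multi-delta procedure to produce $(R_1,R_2)$ and the order-one recurrence coefficients; I expect order one to be the minimal successful ansatz, since a homogeneous order-one recurrence would be inconsistent with the non-trivial base value computed in Lemma~\ref{L:Q3_breakpoint}. Once $(R_1,R_2)$ are in hand, the verification of the telescoping identity reduces to clearing denominators and checking equality of two polynomials in $(s,i,j,j',t)$, which is mechanical. Summing $j'$ from $0$ to $s$ and $t$ from $1$ to $s-j'$ then yields $\D(s,i+1,j) - \D(s,i,j)$ on the left, while the telescoping contributions collapse to boundary terms that, together with $\sum H$, assemble into $\Inhom(s,i,j)$.

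The main obstacle will be the boundary bookkeeping, not the certificate search. The inner $t$-range $1 \le t \le s-j'$ depends on $j'$, the binomial $\binom{i-j'-1}{s+t-1}$ vanishes outside a region that depends on $i$, and $\binom{s-j'}{t}^{-1}$ introduces removable singularities at $t = s-j'+1$; under the shifts $i \mapsto i+1$ and $j' \mapsto j'+1$ the nonvanishing supports move, so the usual ``all boundaries cancel'' shortcut of the WZ method is not automatic here. I expect $\Inhom(s,i,j)$ to arise precisely from the uncancelled edges at $t=0$, $t=s-j'+1$, $j'=0$ and $j'=s+1$, together with the term generated when the shift $i\mapsto i+1$ unlocks the row $i-j'-1 = s+t-1$ of the binomial. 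Carefully evaluating each of these boundary pieces and matching them to the closed form asserted in Proposition~\ref{P:Q3_inhom} is the real content; once that matching is done, Proposition~\ref{P:Q3_double_sum_recur} follows.
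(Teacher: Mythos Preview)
Your proposal is essentially the same approach as the paper's: apply the multivariable Zeilberger (Apagodu--Zeilberger) algorithm with $i$ as the recurrence parameter and $(j',t)$ as summation variables, then identify $\Inhom(s,i,j)$ as the boundary residue of the telescoping.

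Two small remarks on how the paper's execution differs from your sketch. First, the paper keeps the $i$-dependent prefactor $(-1)^i i\binom{2s}{i}$ inside the summand and only strips the $i$-independent sign $(-1)^{s+j+1}$; with that normalization the recurrence at the summand level is exactly $-F(i;j',t)+F(i+1;j',t)=\Delta_{j'}G_{j'}+\Delta_t G_t$ with no leftover term $H$ --- so your extra $H$ is unnecessary here, and $\Inhom$ comes purely from the telescoping boundaries. Second, and more usefully, the certificate in the $j'$ direction turns out to be identically zero ($R_{j'}=0$), so all the edge contributions you anticipate at $j'=0$ and $j'=s+1$ vanish; only the $t$-boundary at $t=1$ survives (the upper $t$-boundary vanishes too). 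This collapses your four-edge bookkeeping to a single sum over $j'$ of $G_t(i,j',1)$, which is then handled in Proposition~\ref{P:Q3_inhom}. Knowing this in advance will save you most of the boundary work you were bracing for.
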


\begin{proof}
  If we leave out the factor $(-1)^{s+j+1}$, the summand
  \[
    \F(i; j', t) =
    (-1)^{i+t+j'}
    \frac{i}{t}
    \binom{2s}{ i}
    \binom{(i-j'-1}{ s+t-1}
    \binom{s}{ t-1}
    \binom{s} {j-j'} 
    { \dbinom{s-j'}{ t} }^{-1}
  \]
  satisfies a first order recurrence
  \[
    -F(i; j', t) + F(i+1; j', t) = \Delta_{j'}(i; j', t) + \Delta_{t}(i; j', t),
  \]
  where $\Delta_{j'}$
  and
  $\Delta_{t}$
  are the forward difference operators on $j'$ and $t$, respectively (see Appendix~\ref{S:A_GZ}).
  
  The certificates are
  \begin{align*}
    \R_{j'} &= 0,  \\
    \R_{t} &= \frac{ (s-j'-t+1) (s+t-1) } { (i (-j'+i+1-s-t)) } .  
  \end{align*}
  
  By summing the variables $j'$ and $t$ on the left side
  of the recurrence, and by
  telescoping on the right side, the double sum
  satisfies a first order inhomogeneous recurrence
  \begin{equation*} \label{E:Q3_new_double_sum_recur} 
    -\D(s, i, j) + \D(s, i+1, j) = \Inhom(s, i, j),  
  \end{equation*}
  where $\Inhom(s, i, j)$ is a single sum.
  The inhomogeneous term $\Inhom(s, i, j)$ is evaluated in Proposition~\ref{P:Q3_inhom}.
\end{proof}

Since Proposition~\ref{P:Q3_inhom}
demonstrates that when $i>j$, the inhomogeneous term vanishes,
by the recurrence of Eq.~\eqref{E:Q3_double_sum_recur}
in Proposition~\ref{P:Q3_double_sum_recur}
$\D(s, i, j)$ is independent of $i$.
Below we show explicitly that when $i=2s$,
the double sum $\D(s, 2s, j)$ equals to $(-1)^{j+1}\binom{2s}{j}$,
hence the double sum $\D(s, i, j)$ takes this value when $i>j$.
\begin{lemma} [Value of the double sum at $i=2s$]
  When $i=2s$,
  \[
    \D(s, 2s, j) = (-1)^{j+1}\binom{2s}{j} .
  \]
\end{lemma}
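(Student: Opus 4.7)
The plan is to use Proposition~\ref{P:Q3_S1X} as a detour rather than evaluate the triple sum defining $\D(s, 2s, j)$ directly. Since that proposition states $\btwo_{i,j}^{3v} = (-1)^j \binom{2s}{j} + \D(s, i, j)$ on the strict upper-left triangle of $Q_3$, the claim $\D(s, 2s, j) = (-1)^{j+1}\binom{2s}{j}$ is equivalent to showing $\btwo_{2s,j}^{3v} = 0$ for $s \le j \le 2s-1$. The appeal of this reformulation is that $\btwo_{2s,j}^{3v}$ is only a single sum over $j'$ (from Eq.~\eqref{E:Q3_v} restricted to the third component of $\coe$), and at $i = 2s$ the coefficient $\coe_{0, s-j'}^{(3)}$ simplifies drastically.

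First I would perform that simplification. Corollary~\ref{C:hij} with first index $0$ leaves only the $t = 0$ term in the inner sum, giving $\coe_{0,j}^{(3)} = \binom{s-1+j}{j}$; hence $\coe_{0,s-j'}^{(3)} = \binom{2s-1-j'}{s-j'}$. Substituting into Eq.~\eqref{E:Q3_v} (for the third component only) and changing variables $k = s-j'$, after cancelling signs, yields
\[
  \btwo_{2s,j}^{3v} = (-1)^j \sum_{k=0}^{s} (-1)^{k} \binom{s-1+k}{k} \binom{s}{(j-s)+k}.
\]

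Next I would recognize this as a Vandermonde convolution in disguise. The upper-negation identity $\binom{s-1+k}{k} = (-1)^k \binom{-s}{k}$ removes the alternating sign, and then the symmetry $\binom{s}{(j-s)+k} = \binom{s}{(2s-j)-k}$ brings the sum into the standard shape $\sum_k \binom{-s}{k} \binom{s}{(2s-j)-k} = \binom{0}{2s-j}$. In the range $s \le j \le 2s-1$ the target index $2s-j$ lies in $\{1, 2, \ldots, s\}$, so $\binom{0}{2s-j} = 0$. Thus $\btwo_{2s,j}^{3v} = 0$, and Proposition~\ref{P:Q3_S1X} delivers the lemma.

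The only real obstacle is spotting the $t$-sum collapse at $i = 2s$; once that is noticed the remainder is a classical binomial identity requiring no Gosper or Zeilberger input. Equivalently, the same vanishing can be read off from a finite-difference viewpoint: $\binom{(s-1)+k-(j-s)}{s-1}$ is a polynomial in $k$ of degree $s-1$ (with the correct zeros at $k = 0, \ldots, j-s-1$ to agree with the standard binomial on that range), so its $s$-th finite difference paired with $\binom{s}{k}$ must vanish.
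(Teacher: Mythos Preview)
Your argument is correct and takes a genuinely different route from the paper. The paper works directly with the definition of $\D(s,i,j)$ in Eq.~\eqref{E:Q3_double_sum_new}: at $i=2s$ it shows the inner $t$-sum is \Go-summable, evaluates the antidifference to obtain two terms, sums the first over $j'$ via Chu--Vandermonde to get $(-1)^{j+1}\binom{2s}{j}$, and then shows the second is again \Go-summable in $j'$ with vanishing boundary values. You instead step back through Proposition~\ref{P:Q3_S1X} to the original quantity $\btwo_{2s,j}^{3v}$; because $2s-i=0$ there, the $t$-sum inside $\coe^{(3)}_{0,s-j'}$ collapses to its single $t=0$ term, and the remaining $j'$-sum is a clean upper-negation plus Vandermonde identity yielding $\binom{0}{2s-j}=0$ in the relevant range. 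Your detour avoids both invocations of \Go's algorithm and is strictly more elementary; the paper's direct evaluation, by contrast, keeps everything phrased in terms of $\D$ itself and so meshes without translation into the recurrence of Proposition~\ref{P:Q3_double_sum_recur}. The only point worth making explicit in a write-up is that $i=2s$, $s\le j\le 2s-1$ does lie in the strict upper-left triangle of $Q_3$, so Proposition~\ref{P:Q3_S1X} is indeed available without circularity.
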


\begin{proof}
  When $i=2s$,
  the inner sum of
  Eq.~\eqref{E:Q3_double_sum_new}
  is \Go-summable,
  with the \anti as
  \[
    \frac{ (-1)^{s+j+j'+t} (s+t-1) }
    {t}
    \binom{-j'+ 2 s-1}{ s+t-1 }
    \binom{s} { t-1 }
    \binom{s} {j-j')}
    { \binom{s-j'} {t} }^{-1}.
  \]
  By using this expression of the \anti, the inner sum of $t$ is evaluated as
  \[
    (-1)^{j+1}
    \binom{s} { j'}
    \binom{s} {j-j'}
    +
    (-1)^{s+j'+j}
    \binom{s} {j-j'}
    \binom{-j'+2s-1} {s-1} .
  \]
  The sum of the first term over $j'$ is evaluated as
  \[
    (-1)^{j+1} \binom{2s} { j}
  \]
  by using identity Eq.~\eqref{E:binom_iden_V}.
  We show that the sum over $j'$  of the second term in the \anti vanishes.

  It turns out that the second term is also \Go-summable \wrt $j'$,
  with the antidifference as
  \[
    (-1)^{s+j'+j} 
    \frac{    s}
    {j-2s }
    \binom{2s - j'} {s}
    \binom{s-1} {j-j'}  .
  \]
  This antidifference vanishes at both ends at $j'=s+1$ and $j'=0$
  when $j \ge s$,
  hence the sum is zero.
  
\end{proof}

From this Lemma and Proposition~\ref{P:Q3_double_sum_recur} we prove that
the double sum $\D(s, i, j)$ takes the value of $(-1)^{j+1}\binom{2s}{j}$
in the
strict upper-left triangle of the third \qu.
\begin{coro}[Double sum in the strict upper-left triangle] \label{C:double_sum}
  The double sum in the strict upper-left triangle,
  where $i>j$,
  has the following value
  \[
    \D(s, i, j) = (-1)^{j+1}\binom{2s}{j} .
  \]
\end{coro}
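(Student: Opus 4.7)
The plan is to combine the ingredients already established just above the Corollary: the recurrence from Proposition~\ref{P:Q3_double_sum_recur}, the vanishing of its inhomogeneous term in the relevant range (Proposition~\ref{P:Q3_inhom}, cited in the text preceding the Corollary), and the boundary evaluation $\D(s, 2s, j) = (-1)^{j+1}\binom{2s}{j}$ from the preceding Lemma. The Corollary is then essentially a telescoping consequence of these three facts.

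First, I would invoke Proposition~\ref{P:Q3_inhom} to note that $\Inhom(s, i, j) = 0$ for every $i$ with $i > j$. Substituting this vanishing into the recurrence
\[
-\D(s, i, j) + \D(s, i+1, j) = \Inhom(s, i, j)
\]
reduces it throughout the strict upper-left triangle of $Q_3$ to the trivial identity
\[
\D(s, i, j) = \D(s, i+1, j).
\]
In other words, once we are in the regime $i > j$, the value of $\D(s, i, j)$ is independent of $i$.

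Next, I would run a descending induction on $i$ starting from the base $i = 2s$, which is exactly the content of the preceding Lemma, giving $\D(s, 2s, j) = (-1)^{j+1}\binom{2s}{j}$. The inductive step uses the simplified recurrence to pass from $i+1$ to $i$; the step is legal as long as $i > j$, so that Proposition~\ref{P:Q3_inhom} still applies, which is precisely the hypothesis of the Corollary. Chaining the steps from $i = 2s$ down to any $i$ with $j < i \le 2s$ gives the claimed equality.

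There is essentially no obstacle in the Corollary itself: all the heavy lifting (evaluating the boundary double sum via a Gosper antidifference in both summation variables, and deriving the first-order recurrence via the multivariable Wilf--Zeilberger algorithm) has already been carried out in the preceding Lemma and Propositions. The only nontrivial consideration is checking that the induction never leaves the region $i > j$, which is automatic since we only ever step upward from $i$ to $2s$, and $j < 2s$ throughout the triangle.
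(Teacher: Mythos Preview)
Your proposal is correct and follows essentially the same approach as the paper: the paper likewise argues that, because Proposition~\ref{P:Q3_inhom} gives $\Inhom(s,i,j)=0$ for $i>j$, the recurrence of Proposition~\ref{P:Q3_double_sum_recur} forces $\D(s,i,j)$ to be independent of $i$ in the strict upper-left triangle, and then invokes the preceding Lemma's value $\D(s,2s,j)=(-1)^{j+1}\binom{2s}{j}$ to conclude. Your descending induction from $i=2s$ simply makes this independence argument explicit.
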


From Proposition~\ref{P:Q3_S1X} and Corollary~\ref{C:double_sum}
we see that $\btwo_{i,j}^{(3)}$ vanishes in the strict upper-left triangle of the third \qu.
\begin{prop} \label{P:Q3_third}
  In the strict upper-left triangle of the third \qu,
  \[
    \btwo_{i,j}^{(3)} = 0.
  \]
\end{prop}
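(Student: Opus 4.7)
The plan is to combine the two preceding results that together isolate and then cancel the two pieces of $\btwo_{i,j}^{(3)}$. Since only the vertical recurrences from \PTwo{} contribute in the strict upper-left triangle of $Q_3$, we have $\btwo_{i,j}^{(3)} = \btwo_{i,j}^{3v}$ as defined through Eq.~\eqref{E:Q3_v} with the third summand $\coe_{2s-i,s-j'}^{(3)}$ of the sequence.

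First, I would invoke Proposition~\ref{P:Q3_S1X} to rewrite this quantity in its transformed form
\[
\btwo_{i,j}^{3v} = (-1)^j \binom{2s}{j} + \D(s,i,j).
\]
Next, I would apply Corollary~\ref{C:double_sum}, which evaluates the new double sum throughout the region $i>j$ as
\[
\D(s,i,j) = (-1)^{j+1} \binom{2s}{j}.
\]
Substituting this into the transformed expression yields
\[
\btwo_{i,j}^{(3)} = (-1)^j \binom{2s}{j} + (-1)^{j+1} \binom{2s}{j} = 0,
\]
which is the stated conclusion.

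The substantive work has already been performed in the preceding chain of lemmas. The first piece, $(-1)^j\binom{2s}{j}$, arises from the closed-form part $\sumtS(s,i,j')$ of the inner $t$-sum (Lemma~\ref{L:Q3_sum_t_closed_form}), summed against the outer $j'$-sum by Chu-Vandermonde (Lemma~\ref{L:Q3_inner_sum_1st_term}). The second piece $\D(s,i,j)$ collects the correction term from Lemma~\ref{L:Q3_correction} that repairs the breakdown of the recurrence \eqref{E:Q3_sum_t_recur} at $j'=i-s-1$; its evaluation rests on the simple first-order recurrence of Proposition~\ref{P:Q3_double_sum_recur} together with the anchor value at $i=2s$. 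Had any of these not been in hand, the genuine obstacle would have been the correction term of Lemma~\ref{L:Q3_correction}, since it is forced by a structural zero of the Zeilberger recurrence and must be verified to agree with the original sum at the break point $j'=i-s-1$. Given all these inputs, the proposition itself reduces to the one-line cancellation above, with no further difficulty.
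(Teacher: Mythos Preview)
Your proof is correct and follows the same approach as the paper: both derive the result by combining Proposition~\ref{P:Q3_S1X} with Corollary~\ref{C:double_sum}, so that the closed-form term $(-1)^j\binom{2s}{j}$ is cancelled by the evaluated double sum $\D(s,i,j)=(-1)^{j+1}\binom{2s}{j}$. Your additional commentary correctly summarizes the chain of lemmas on which these two inputs rest.
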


Now we deal with the inhomogeneous term $\Inhom(s, i, j)$
of the recurrence relation Eq.~\eqref{E:Q3_double_sum_recur}.
As mentioned in Appendix~\ref{S:A_GZ}, 
for multi-variable multisum with
nonstandard boundary conditions,
the application of \Ze's algorithm becomes more complicated.
For double sums with nonstandard boundary conditions,
two single sums will appear in the inhomogeneous terms
(see the Lemmas~\ref{L:inhomo1}, \ref{L:inhomo2} and ~\ref{L:inhomo3}
in Appendix~\ref{S:A_GZ}).
If the running index is the same as the parameter of the recurrence,
additional single sums will be needed to cover the elements on the boundary.

In our case here we will use $i$ as the recurrence parameter.
Based on the relation of the summation indices $t$ and $j'$
of the double sum $\D(s, i, j)$ (Eq.~\eqref{E:Q3_double_sum_new}),
Lemma~\ref{L:inhomo3} will be used in the following.
Since one of the certificates $\R_{j'} = 0$,
we need to only work on the single sum related to $\R_t$
(Proposition~\ref{P:Q3_double_sum_recur}).

After putting back the factor $(-1)^{s+j+1}$, the inhomogeneous term of the
recurrence in Proposition~\ref{P:Q3_double_sum_recur}
is given below.
The case of $i=j$ will be used in the next section
when we discuss the terms on the diagonal (section~\ref{SS:Q3_diag}).
\begin{prop}[Inhomogeneous term]  \label{P:Q3_inhom}
  \[
    \Inhom(s, i, j) = \begin{dcases}
      (-1)^{j+1} \binom{2s}{i} ,  & \quad s \le i = j, \\
      0, & \text{otherwise} .
      \end{dcases}
  \]
\end{prop}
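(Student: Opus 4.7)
My plan is to realise $\Inhom(s,i,j)$ as the telescoping boundary of the certificate identity of Proposition~\ref{P:Q3_double_sum_recur}. Since $\R_{j'}\equiv 0$, the right-hand side reduces to $\Delta_t \G_t$ with $\G_t=\R_t \F$, and summing over the triangular region $\{(j',t):0\le j'\le s,\ 1\le t\le s-j'\}$ via Lemma~\ref{L:inhomo3} expresses $\Inhom(s,i,j)$ (up to the overall sign $(-1)^{s+j+1}$) as a single sum of boundary values $\sum_{j'}[\G_t(j',s-j'+1)-\G_t(j',1)]$.

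The first step is to verify that the upper boundary contributes nothing. At $t=s-j'+1$ the factor $(s-j'-t+1)$ in $\R_t$ vanishes, $\binom{s-j'}{t}^{-1}$ has a pole, and the binomial $\binom{i-j'-1}{2s-j'}$ inside $\F$ is zero in the relevant range of $i$. I would disentangle these coincident zeros and poles through a careful Gamma-function parameterisation, confirming that $\lim_{t\to s-j'+1}\R_t\F=0$ so that only the $t=1$ boundary survives.

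Next I would simplify the $t=1$ contribution. Direct substitution together with the algebraic identity $s\binom{i-j'-1}{s}/(i-j'-s)=\binom{i-j'-1}{s-1}$ — whose apparent pole at $j'=i-s$ is a removable singularity absorbed by the matching zero of $\binom{i-j'-1}{s}$ — turn $\Inhom$ into the single sum
\[
\Inhom(s,i,j) = (-1)^{s+i+j+1}\binom{2s}{i}\sum_{j'}(-1)^{j'}\binom{i-j'-1}{s-1}\binom{s}{j-j'}.
\]

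Finally, I would evaluate this single sum. Encoding $\binom{i-j'-1}{s-1}=[x^{s-1}](1+x)^{i-j'-1}$ and $\binom{s}{j-j'}=[y^{j-j'}](1+y)^s$ and summing $j'$ as a geometric series yields the closed form $[x^{s-1}y^j]\,(1+x)^i(1+y)^s/(1+x+y)$. Expanding $1/(1+x+y)$ in powers of $y/(1+x)$ and invoking the convolution identity $\sum_{m}(-1)^m\binom{s}{m}\binom{n-m}{s-1}=[n=s-1]$ (which follows from $(1-z)^s\cdot z^{s-1}/(1-z)^s = z^{s-1}$) shows the sum equals $(-1)^{j-s}$ when $i=j$ and vanishes otherwise, for $j\ge s$. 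Substituting back reproduces the proposition's piecewise value. The hard part will be the first step — the boundary analysis where simultaneous zeros and blow-ups of $\R_t$, $\binom{s-j'}{t}^{-1}$, and $\binom{i-j'-1}{2s-j'}$ must be resolved by limits — while the subsequent single-sum evaluation is routine once the boundary is tamed.
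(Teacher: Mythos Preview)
Your first three steps coincide with the paper's proof: reduce $\Inhom$ to the telescoping boundary via Lemma~\ref{L:inhomo3} (with $\R_{j'}=0$), argue that the $t=s-j'+1$ boundary vanishes, and simplify the $t=1$ boundary to the single sum
\[
\Inhom(s,i,j)=(-1)^{s+i+j+1}\binom{2s}{i}\sum_{j'=0}^{s}(-1)^{j'}\binom{i-j'-1}{s-1}\binom{s}{j-j'}.
\]
The paper then evaluates this sum algorithmically: for $i>j$ it exhibits a Gosper antidifference $W(j')$ that vanishes at both endpoints, and for $i=j$ (where $W$ has a pole) it runs Zeilberger on the summand to get a first-order recurrence in $i$, solving from the easily-computed value at $i=s$.

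Your step 4 takes a different, more classical route via the convolution identity, and the underlying idea does work: substituting $m=j'-(j-s)$ turns the sum into $(-1)^{j-s}\sum_m(-1)^m\binom{s}{m}\binom{(i-j+s-1)-m}{s-1}$, and your identity $\sum_m(-1)^m\binom{s}{m}\binom{n-m}{s-1}=[n=s-1]$ (with the Cauchy-product convention $\binom{k}{s-1}=0$ for $k<0$) then gives $(-1)^{j-s}[i=j]$ directly. This is arguably cleaner than the paper's split into Gosper and Zeilberger branches.

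However, your sketch as written has a gap. The generating-function encoding $\binom{i-j'-1}{s-1}=[x^{s-1}](1+x)^{i-j'-1}$ together with the geometric series over $j'\ge 0$ uses the \emph{extended} binomial convention and sums over \emph{all} $j'\ge 0$; under that convention the coefficient $[x^{s-1}y^j](1+x)^i(1+y)^s/(1+x+y)$ is identically zero for $j\ge s$ (expand in powers of $x/(1+y)$ instead to see this). The nonzero value on the diagonal comes precisely from the discrepancy between the restricted range $0\le j'\le s$ and the extended range: when $i=j$, the single extra term at $j'=i$ carries $\binom{-1}{s-1}=(-1)^{s-1}$ and accounts for the answer. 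Your two ingredients --- the GF closed form and the convolution identity --- thus live under incompatible conventions, and you must either (i) track the boundary correction when truncating the geometric series at $j'=s$, or (ii) skip the GF detour entirely and apply the convolution identity after the substitution above, checking that extending $m$ to $0,\ldots,s$ introduces only zero terms. Either fix is routine once noticed, but the sketch as it stands does not close the case $i=j$.
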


\begin{proof}
  Denote the summand of the double sum of Eq.~\eqref{E:Q3_double_sum_new}
  by $F$.
  
  Using the notation of Lemma~\ref{L:inhomo3},
  for the double sum $\D(s, i, j)$,
  we have
  \begin{align*}
    u_{j'} = 0, &  \quad v_{j'} = s,\\
    u_t = 1.    
  \end{align*}
  
  When $t=1$,
  \begin{equation} \label{E:Q3_Gt1}
    \G_t(i, j', 1) =  \R_t \F\Big\rvert_{t=1}  = (-1)^{i+j'+1}
    \binom{2s}{i}
    \binom{i-j'-1}{s-1}
    \binom{s}{j-j'} ,
  \end{equation}
  and when $t=s+1-j'$,
  \[
    \G_t(i, j', s+1-j') = \R_t \F\Big\rvert_{t=s+1-j'}  = 0.
  \]
  Hence
  the inhomogeneous term is
  \[
    \Inhom(s, i, j) = -\sum_{j'=0}^{s}   \G_t(i, j', 1).
  \]
  The summand $ \G_t(i, j', 1)$ is Gosper-summable,
  with antidifference $W(j')$ as
  \[
    W(j') =
    (-1)^{i+j'}
    \binom{2s}{i}
    \binom{i-j'}{ i-j}
    \binom{i-j-1}{i-s-j'} .
  \]

  We discuss two cases $i>j$ and $i=j$ separately
  for $W(j')$.  The case of $i=j$ will be used in the discussions of the terms
  along the diagonal.
  
  When $i>j$,
  for $W(j')$ to be nonzero,
  the second binomial coefficient needs $j' \ge j$,
  while the third binomial coefficient needs $-j-1 \ge j' - s$,
  which is $j \le j'-s+1$.
  But we have the range for $j'$ and $j$ as $j' \le s \le j$,
  so
  $j' = j = s$
  is the only choice;
  this leads to $s \le 1$;
  so the only combinations for $W(j')$ to be nonzero
  is
  when $(s=1, i=2, j=1, j'=1)$.
  In this case $W(j') = -1$, a constant.
  Hence
  When $i>j$,
  $\Inhom(s,i,j) = 0$.

  For $i=j$, there is a pole in the  antidifference $W(j')$,
  so we use \Ze's algorithm to do the sum.
  The summand $\G_t(i, j') = \G_t(i, j', 1)$ when evaluated at $i=j$ is
  \[
    \G_t(i, j')  = (-1)^{i+j'+1}
    \binom{2s}{i}
    \binom{i-j'-1} {s-1}
    \binom{s}{i-j'},
  \]
  which obeys a first order homogeneous recurrence
  \[
    (-2s+i) \G_t(i, j') + (i+1) \G_t(i+1, j') = 0, 
  \]
  with the certificate as
  \[
    \R = \frac{ (i^2+3 i-i s-3 s+j' s-2 i j'+j'^2-2 j'+1) (i-j') (i-j'-s) }
    {(i-j'-s+1) (i-j'+1) }.
  \]

  The sum $\Inhom(s, i, i) = \Inhom(s, i) = \sum_{j'=0}^{s} \G_t(i, j')  $ can be obtained as
  \begin{align*}
    \Inhom(s, i) &= \frac{2s -i +1}{i} \Inhom(s, i-1) \\
             &= \frac{2s -i +1}{i} \frac{2s -i + 2}{i-1} \cdots \frac{s}{s+1} \Inhom(s, s) \\
    &= \frac{s!^2}{i! (2s-i)!} \Inhom(s, s) .
  \end{align*}

  When $i=s$, $\Inhom(s,s)$ can be easily obtained, since only one nonvanishing term remains
  for the sum when $j'=0$:
  \[
    \Inhom(s,s) = (-1)^s \binom{2s}{s} .
  \]
  Hence we have for $i=j$,
  \[
    \Inhom(s,i) = (-1)^s \binom{2s}{i} .
  \]
\end{proof}

\subsection{The diagonal of the third \qu $Q_3$} \label{SS:Q3_diag}

We show in this section that overall contributions from the recurrence relations in the set \PTwo
in the third \qu diagonal is zero.
Since we have already shown that the recurrence identities in the set \POne
give the correct coefficients along the diagonal (by a factor of $2$) (Corollary~\ref{C:P1_diag}),
the main theorem is proved for the diagonal in the third \qu.

First,
from Eq.~\eqref{E:Q3_s1g1} we see that
when $i=j$, the boundary condition of the summation
of $\btwo_{i,j}^{(1)}$
becomes standard,
so the  identity Eq.~\eqref{E:binom_iden_V} can be used to evaluate the sum.
Hence the contribution from the first term of $\coe_{i,j}$
of the vertical recurrence identities in the set \PTwo is
\begin{equation} \label{E:Q3_diag_g1}
    \btwo_{i, i}^{(1)} = (-1)^{i+1} \binom{2s}{i} .
\end{equation}
The horizontal recurrences are not long enough to
touch the diagonal in the third \qu (Figure~\ref{F:S2} and Figure~\ref{F:S2s5}).

We  have also proved that the contribution from the second term of $\coe_{i,j}$
of the recurrence identities in the set \PTwo vanishes in the diagonal
(Lemma~\ref{L:Q3_S10}).

For the third term of $\coe_{i,j}$,
we first show that the double sum in Proposition~\ref{P:Q3_S1X} (the correction term)
vanishes in the diagonal.

\begin{lemma}[Double sum vanishes]
  When $i=j$,
  \[
    \D(s, i, i) = 0 .
  \]
\end{lemma}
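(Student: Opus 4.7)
The plan is to obtain $\D(s,i,i)=0$ by applying the first-order recurrence for $\D$ along the direction of $i$, with $j$ frozen at the diagonal value $i$, and then using the closed-form value of $\D$ one step off the diagonal to pin down the diagonal entry. Concretely, I would fix a value $i_0$ with $s \le i_0$ on the diagonal, substitute $j = i_0$ into the recurrence of Proposition~\ref{P:Q3_double_sum_recur}, and apply it at the index $i = i_0$ to get
\[
\D(s, i_0+1, i_0) - \D(s, i_0, i_0) = \Inhom(s, i_0, i_0).
\]
Both terms on the right-hand side are already under control: Proposition~\ref{P:Q3_inhom} gives $\Inhom(s, i_0, i_0) = (-1)^{i_0+1}\binom{2s}{i_0}$ in the diagonal case, and the point $(i_0+1, i_0)$ lies in the strict upper-left triangle of the third quadrant (since $i_0+1 > i_0$), so Corollary~\ref{C:double_sum} yields $\D(s, i_0+1, i_0) = (-1)^{i_0+1}\binom{2s}{i_0}$. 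Substituting and cancelling gives $\D(s, i_0, i_0) = 0$ immediately.

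Essentially no new calculation is needed; the entire proof is a one-line algebraic consequence of already-established results. The logical flow is: a recurrence in $i$ (Proposition~\ref{P:Q3_double_sum_recur}), a boundary value slightly off the diagonal (Corollary~\ref{C:double_sum}), and an inhomogeneous term on the diagonal (Proposition~\ref{P:Q3_inhom}) happen to agree, forcing the diagonal value of the sum to be zero. I would present it as a short corollary following the previous paragraph.

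The main, rather minor, obstacle is the edge case $i_0 = 2s$, where the index $i_0+1 = 2s+1$ sits outside the $(2s+1)\times(2s+1)$ square and Corollary~\ref{C:double_sum} does not literally apply. In this situation I would fall back on a direct inspection of Eq.~\eqref{E:Q3_double_sum_new}: at $i=j=2s$ the factor $\binom{s}{j-j'} = \binom{s}{2s-j'}$ kills every term except $j'=s$, and at $j'=s$ the inner range $1 \le t \le s-j' = 0$ is empty, so the double sum is vacuously zero. This handles the corner $(2s,2s)$ and completes the argument for the full diagonal of $Q_3$.
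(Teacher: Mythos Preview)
Your argument is correct and matches the paper's own proof essentially line for line: both apply the recurrence of Proposition~\ref{P:Q3_double_sum_recur} at $j=i$, invoke Proposition~\ref{P:Q3_inhom} for the inhomogeneous term on the diagonal, and then use Corollary~\ref{C:double_sum} for the value $\D(s,i+1,i)$ just above the diagonal to force $\D(s,i,i)=0$. Your treatment of the corner $i_0=2s$ via direct inspection of Eq.~\eqref{E:Q3_double_sum_new} is a welcome addition that the paper leaves implicit.
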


\begin{proof}
  The double sum $\D(s, i, j)$ follows the recurrence of Eq.~\eqref{E:Q3_double_sum_recur}
  on the diagonal,
  \[
    -\D(s, i, i) + \D(s, i+1, i) = (-1)^{i+1} \binom{2s}{i},
  \]
  but we have shown that $\D(s, i+1, i) = (-1)^{i+1} \binom{2s}{i}$ in Corollary~\ref{C:double_sum},
  hence $\D(s, i, i) = 0$ .
\end{proof}

From the above Lemma we see the contribution of the third term of $\coe_{i,j}$
of the recurrence identities in the set \PTwo is given by the first term
of Eq.~\eqref{E:Q3_S1X},
\begin{equation} \label{E:Q3_diag_g3}
    \btwo_{i, i}^{(3)} = (-1)^{i} \binom{2s}{i} .
\end{equation}

Putting
Eq.~\eqref{E:Q3_diag_g1},
Eq.~\eqref{E:Q3_S10c},
and
Eq.~\eqref{E:Q3_diag_g3}
together, we have
\begin{prop} \label{P:Q3_diag_P2}
  In the diagonal of the third \qu,
  the contributions from the recurrence identities in the set \PTwo
  is zero.
  \[
    \btwo_{i,i} = \btwo_{i,i}^{(1)}
    + \btwo_{i,i}^{(2)}
    + \btwo_{i,i}^{(3)}
    = 0.
  \]
\end{prop}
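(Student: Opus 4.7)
The plan is to split $\btwo_{i,i}$ according to the three-term decomposition of $\coe_{i,j}$ from Corollary~\ref{C:hij} and evaluate each piece on the diagonal separately, then check that the three contributions cancel. A preliminary remark is that, as one sees directly from Figures~\ref{F:S2} and~\ref{F:S2s5}, the horizontal identities in \PTwo{} do not reach the main diagonal inside $Q_3$, so it suffices to analyse the vertical contribution $\btwo^{v}_{i,i}$ of Eq.~\eqref{E:Q3_v}.

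For $\btwo^{(1)}_{i,i}$ I would specialise Eq.~\eqref{E:Q3_s1g1} to $j=i$. The summation range $i-s \le j' \le s$ then coincides with the natural Vandermonde range, so Chu-Vandermonde (Eq.~\eqref{E:binom_iden_V}) collapses the sum to $\binom{2s}{i}$, giving $\btwo^{(1)}_{i,i} = (-1)^{i+1}\binom{2s}{i}$. For $\btwo^{(2)}_{i,i}$ I would simply read off Lemma~\ref{L:Q3_S10}: the closed form is nonzero only in the strict range $s \le j < i$, which excludes $j=i$, hence $\btwo^{(2)}_{i,i}=0$.

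The real work is the double-sum piece $\btwo^{(3)}_{i,i}$, and I expect this to be the main obstacle because the transformed form of Proposition~\ref{P:Q3_S1X} and the closed value $\D(s,i,j) = (-1)^{j+1}\binom{2s}{j}$ of Corollary~\ref{C:double_sum} were only established in the \emph{strict} upper-left triangle $i>j$; extending them to the boundary $j=i$ requires a bridging argument. My plan is to use the inhomogeneous recurrence of Proposition~\ref{P:Q3_double_sum_recur},
\[
 -\D(s,i,j) + \D(s,i+1,j) = \Inhom(s,i,j),
\]
at $j=i$. Proposition~\ref{P:Q3_inhom} then supplies $\Inhom(s,i,i) = (-1)^{i+1}\binom{2s}{i}$, while Corollary~\ref{C:double_sum} applied at the shifted point $(i+1,i)$, which lies in the strict upper-left triangle, gives $\D(s,i+1,i) = (-1)^{i+1}\binom{2s}{i}$. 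The recurrence therefore forces $\D(s,i,i)=0$. Substituting back into Proposition~\ref{P:Q3_S1X} with $j=i$ yields $\btwo^{(3)}_{i,i} = (-1)^i \binom{2s}{i}$.

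Finally, I would assemble the three contributions:
\[
 \btwo_{i,i} = \btwo^{(1)}_{i,i} + \btwo^{(2)}_{i,i} + \btwo^{(3)}_{i,i}
 = (-1)^{i+1}\binom{2s}{i} + 0 + (-1)^{i}\binom{2s}{i} = 0,
\]
which is the desired identity. Combined with Corollary~\ref{C:P1_diag}, which gives the correct diagonal coefficient $(-1)^i \binom{2s}{i}\times 2$ from \POne{}, this completes the verification of Eq.~\eqref{E:goal_lhs} along the diagonal of $Q_3$.
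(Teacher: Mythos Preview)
Your proposal is correct and follows essentially the same route as the paper's own argument in Section~\ref{SS:Q3_diag}: the paper likewise observes that horizontal identities from \PTwo{} do not reach the $Q_3$ diagonal, evaluates $\btwo^{(1)}_{i,i}$ via Chu--Vandermonde from Eq.~\eqref{E:Q3_s1g1}, reads $\btwo^{(2)}_{i,i}=0$ off Lemma~\ref{L:Q3_S10}, and obtains $\D(s,i,i)=0$ by combining the recurrence of Proposition~\ref{P:Q3_double_sum_recur} at $j=i$ with $\D(s,i+1,i)=(-1)^{i+1}\binom{2s}{i}$ from Corollary~\ref{C:double_sum} and the inhomogeneous term from Proposition~\ref{P:Q3_inhom}. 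Your explicit remark that Proposition~\ref{P:Q3_S1X} was stated only for the strict triangle and needs a bridging step is a fair observation; the paper uses the transformed form on the diagonal without re-justifying it, relying implicitly on the fact that Proposition~\ref{P:Q3_sum_t} and Lemma~\ref{L:Q3_inner_sum_1st_term} hold for all $0\le j'\le s$ independently of the relation between $i$ and $j$.
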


\begin{proof}[Partial proof of the main theorem in the third \qu{}]
  By combining
  Lemma~\ref{L:Q3_P1vP2_1},
  Corollary~\ref{C:Q3_P1hP2_2},
  and
  Proposition~\ref{P:Q3_third},
  we prove that in the strict upper-left triangle of the third \qu,
  the terms from the recurrence identities from the sets \POne and \PTwo
  cancel each other out.
  By symmetry,
  this also holds for the strict lower-right triangle in the third \qu.
  By combining
  Corollary~\ref{C:P1_diag} and
  Proposition~\ref{P:Q3_diag_P2}
  we prove that the terms in the diagonal  of the third \qu
  have the correct values of the main theorem, by a factor of two.
  Hence the left side of the main theorem Eq.~\eqref{E:goal_lhs} is
  proved in the third \qu.
\end{proof}

\subsection{The fourth \qu $Q_4$} \label{S:Q4}

\subsubsection{Summary}

In the fourth \qu $Q_4$ (top left \qu),
$0 \le i \le s$ and
$s \le j \le 2s$.
As for the third \qu,
the situation is also complicated, but in a different way from the third \qu,
since at each lattice site,
both horizontal and vertical recurrence identities from both \POne and \PTwo
contribute
(Figure~\ref{F:S1}, Figure~\ref{F:S1s5},  Figure~\ref{F:S2}, and Figure~\ref{F:S2s5}).

The site $(s, s)$ is discussed in the $Q_3$ section, and it is excluded
from the following discussions of $Q_4$.

At each position $(i,j)$ in the fourth \qu $Q_4$,
where $0 \le i \le s$ and $s \le j \le 2s$,
the contributions from \POne are
\begin{align} \label{E:Q4_S1}
  \bone_{i,j}^{v} +  \bone_{i,j}^{h} &=
      (-1)^j  \sum_{j'=\max(0, i-s,j-s)}^{ \min(s, i, j)  }  \binom{s}{j'}  \binom{s}{j-j'}
    +
    (-1)^i  \sum_{i'=\max(0, i-s,j-s)}^{ \min(s, i, j)  }  \binom{s}{i'}  \binom{s}{i-i'}    \nonumber \\
                                                    &= (-1)^j  \sum_{j'=j-s}^i \binom{s}{j'}  \binom{s}{j-j'}
                                                      + (-1)^i  \sum_{i'=j-s}^i \binom{s}{i'}  \binom{s}{i-i'} .
\end{align}

The  contributions from \PTwo are
\begin{align} \label{E:Q4_P2}
  \begin{aligned}
  \btwo_{i,j}^{v}
  &= \sum_{j'=\max(i+1, j-s)}^{s} (-1)^{j-j'} \coe_{i, j'} \binom{s}{j-j'}
                            = \sum_{j'=0}^{s} (-1)^{j-j'} \coe_{i, j'} \binom{s}{j-j'} \\
  \btwo_{i,j}^{h}
  &= (-1)^s \sum_{i'=0}^{i}    (-1)^{i-i'} \coe_{2s-j, s-i'}  \binom{s}{i-i'}
                            = (-1)^s \sum_{i'=0}^{j-s}    (-1)^{i-i'} \coe_{2s-j, s-i'}  \binom{s}{i-i'}.
                          \end{aligned}
\end{align}

We will prove the following identities.
The contributions from the first term of the vertical (horizontal) recurrence identities in \PTwo
cancel those from the vertical (horizontal) identities in \POne,
\begin{align} \label{E:Q4_1st}
  \begin{aligned}
  \bone_{i,j}^v + \btwo_{i,j}^{1v} &= 0,\\
  \bone_{i,j}^h + \btwo_{i,j}^{1h}  &= 0 .
  \end{aligned}
\end{align}

The contributions from the second term of the vertical (horizontal) recurrence identities in \PTwo
cancel those from the third term of the horizontal (vertical) identities in \PTwo,
\begin{align} \label{E:Q4_2nd_3rd}
  \begin{aligned}
  \btwo_{i,j}^{2v} +  \btwo_{i,j}^{3h} &= 0, \\
  \btwo_{i,j}^{2h} +  \btwo_{i,j}^{3v} &= 0.
  \end{aligned}
\end{align}

Combining Eq.~\eqref{E:Q4_1st} and Eq.~\eqref{E:Q4_2nd_3rd} we prove that
\[
  \bone_{i,j} + \btwo_{i,j} = 0
\]
in the fourth \qu.

\subsubsection{The contributions from the first term of $\coe_{i,j}$}

\begin{lemma}[The first term of $\coe_{i,j}$] \label{L:Q4_1st}
The contributions from the first term of the vertical (horizontal) recurrence identities in \PTwo
cancel those from the vertical (horizontal) identities in \POne (Eq.~\eqref{E:Q4_1st}).
\end{lemma}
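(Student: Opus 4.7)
The plan is that this cancellation is pure sign-and-range bookkeeping: no auxiliary summation identities (Chu--Vandermonde, Gosper, Zeilberger) are needed. I would substitute only the first term
\[
\coe^{(1)}_{i,j'} = (-1)^{j'+1}\binom{s}{j'}\,[j' \le i]
\]
of Corollary~\ref{C:hij} into the \PTwo expressions $\btwo_{i,j}^{1v}$ and $\btwo_{i,j}^{1h}$ coming from Eq.~\eqref{E:Q4_P2}, and verify directly that the resulting sums are precisely the negatives of the \POne contributions $\bone_{i,j}^{v}$ and $\bone_{i,j}^{h}$ in Eq.~\eqref{E:Q4_S1}.

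First I would handle the vertical cancellation. Inserting $\coe^{(1)}_{i,j'}$ into the vertical \PTwo sum, the sign factors collapse as $(-1)^{j-j'}(-1)^{j'+1} = -(-1)^j$, so
\[
\btwo_{i,j}^{1v} = -(-1)^j \sum_{j'} \binom{s}{j'} \binom{s}{j-j'}\, [j' \le i],
\]
where the sum runs over all $j'$ for which both binomials are nonzero. In $Q_4$ we have $0 \le i \le s$ and $s \le j \le 2s$, so the support conditions $0 \le j' \le s$ and $0 \le j-j' \le s$ together with the Iverson bracket collapse the range to exactly $j-s \le j' \le i$. Comparing with the second line of Eq.~\eqref{E:Q4_S1} gives $\btwo_{i,j}^{1v} = -\bone_{i,j}^{v}$ immediately.

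For the horizontal cancellation, the \PTwo weight on a horizontal identity in $Q_4$ is $(-1)^s \coe_{2s-j,s-i'}$ (the symmetric reflection from $Q_2$). Substituting its first-term contribution $\coe^{(1)}_{2s-j,s-i'} = (-1)^{s-i'+1}\binom{s}{s-i'}\,[s-i' \le 2s-j]$, the overall sign becomes $(-1)^s(-1)^{i-i'}(-1)^{s-i'+1} = -(-1)^i$, the symmetry $\binom{s}{s-i'} = \binom{s}{i'}$ restores the shape of $\bone_{i,j}^{h}$, and the Iverson bracket rewrites as $[i' \ge j-s]$, which is exactly the lower limit that appears in the horizontal \POne sum. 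Combined with the binomial supports this restricts $i'$ to $j-s \le i' \le i$, yielding $\btwo_{i,j}^{1h} = -\bone_{i,j}^{h}$.

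The only delicate step is the interaction of the Iverson bracket with the natural supports of the binomials: one must confirm that, in $Q_4$, these together give precisely the range $[j-s,i]$ appearing in Eq.~\eqref{E:Q4_S1}, neither widening nor narrowing it. Since $i \le s \le j$ throughout $Q_4$ (with the corner $(s,s)$ already treated in the $Q_3$ analysis and excluded here), this check is routine. The $(-1)^s$ factor carried by the horizontal \PTwo weights, which is not present on the vertical side, is the one item that deserves care in the sign accounting; once tracked through, the two pairings in Eq.~\eqref{E:Q4_1st} follow in parallel.
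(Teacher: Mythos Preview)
Your proposal is correct and follows essentially the same route as the paper's proof: both substitute the first term $\coe^{(1)}$ directly into the \PTwo expressions of Eq.~\eqref{E:Q4_P2}, collapse the sign factors, and use the Iverson bracket together with the binomial supports in $Q_4$ to recover exactly the summation ranges $j-s \le j' \le i$ (vertical) and $j-s \le i' \le i$ (horizontal) appearing in Eq.~\eqref{E:Q4_S1}. Your slightly more explicit discussion of why the ranges match is a welcome addition, but the argument is the same bookkeeping computation the paper carries out.
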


\begin{proof}
  The contribution from the vertical identities of \POne
  is
  \begin{equation} \label{E:Q4_s1g1}
    \bone_{i,j}^v = 
    (-1)^j  \sum_{j'=\max(0, i-s,j-s)}^{ \min(s, i, j)  }  \binom{s}{j'}  \binom{s}{j-j'}
    = (-1)^j  \sum_{j'=j-s}^i \binom{s}{j'}  \binom{s}{j-j'},
  \end{equation}
  while the  contribution from horizontal identities of \POne is
  \begin{equation} \label{E:Q4_s2g1}
    \bone_{i,j}^h = 
    (-1)^i  \sum_{i'=\max(0, i-s,j-s)}^{ \min(s, i, j)  }  \binom{s}{i'}  \binom{s}{i-i'}
    =  (-1)^i  \sum_{i'=j-s}^i \binom{s}{i'}  \binom{s}{i-i'} .
  \end{equation}
  
  The contribution of the first term of $\coe_{i, j'}$ in the sum of $\btwo_{i,j}^{v}$  is simplified as
  \begin{equation*}
    \btwo_{i,j}^{1v} = 
   \sum_{j'=j-s}^{i} (-1)^{j-j'} (-1)^{j'+1} \binom{s}{j'} \binom{s}{j-j'} 
  = (-1)^{j+1} \sum_{j'=j-s}^{i}  \binom{s}{j'} \binom{s}{j-j'},
\end{equation*}
%which cancels the first term in Eq.~\eqref{E:Q4_s1g1}.
which cancels the Eq.~\eqref{E:Q4_s1g1}.

The first term of $\coe_{2s-j, s-i'}$ in the summand of $\btwo_{i,j}^{h}$  is
\[
  (-1)^{s-i'+1} \binom{s}{s-i'} [s-i' \le 2s-j],
\]
which is simplified in the sum as
\begin{align*}
  \btwo_{i,j}^{1h} &= 
   (-1)^s \sum_{i'=0}^{i} (-1)^{i-i'} (-1)^{s-i'+1} \binom{s}{s-i'} [s-i' \le 2s-j] \binom{s}{i-i'}  \\
  &= (-1)^{i+1} \sum_{i'=j-s}^{i}  \binom{s}{i'} \binom{s}{i-i'}.
\end{align*}
%This cancels the second term in Eq.~\eqref{E:Q4_s2g1}.
This cancels the Eq.~\eqref{E:Q4_s2g1}.

\end{proof}

\subsubsection{The contributions from the second term $\btwo_{i,j}^{2h}$
cancels those from the third term $\btwo_{i,j}^{3v}$}

\begin{prop} \label{P:Q4_2ndh_3rdv}
  In the fourth \qu $Q_4$,
  the contributions from the second term of the horizontal recurrence identities in \PTwo
cancel those from the third term of the vertical identities in \PTwo
  \[
     \btwo_{i,j}^{2h} +  \btwo_{i,j}^{3v} = 0.
  \]
  where
  \begin{align*}
    \btwo_{i,j}^{2h} &=
    (-1)^{s+i+j}
    \binom{2s}{j}
    \sum_{i'=0}^{s}  (-1)^{i'} \binom{j-i'-1}{s}  \binom{s} {i-i'}, \\
    \btwo_{i,j}^{3v} &= 
    (-1)^{i+j+1}
    (2s - i) \binom{2s}{i}
    \sum_{j'=0}^{s}
    \sum_{t=0}^{i}  (-1)^{t+j'}  
    \frac{ 1} {2s - t }
    \binom{i}{t}
    \binom{s-t-1+j'}{ j'}
    \binom{s}{j-j'} .
  \end{align*}
\end{prop}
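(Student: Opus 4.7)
The plan is to recycle the machinery developed for $\btwo_{i,j}^{(3)}$ in the third \qu (Section~\ref{ss:Q3_h2}), via a substitution that identifies the inner $t$-sum of $\btwo_{i,j}^{3v}$ with the sum of Eq.~\eqref{E:Q3_sum_t}. First I would interchange the order of summation in $\btwo_{i,j}^{3v}$ to put $t$ innermost, then apply the substitution $i \mapsto 2s-i$, $j' \mapsto s-j'$, which brings Proposition~\ref{P:Q3_sum_t} to bear on the inner sum. This decomposes it as the closed-form piece $\tfrac{(-1)^{i+j'}}{2s-i}\binom{s}{j'}\binom{2s}{i}^{-1}$ plus a correction term that is active only for $j' > i$ (the image, under the substitution, of the condition ``$j' < i - s$'' appearing in Lemma~\ref{L:Q3_correction}).

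Feeding the closed-form piece back into the outer $j'$-sum cancels the $\binom{2s}{i}^{-1}$, after which the Vandermonde convolution $\sum_{j'}\binom{s}{j'}\binom{s}{j-j'}$ collapses to $\binom{2s}{j}$ by Eq.~\eqref{E:binom_iden_V}, in direct analogy with Lemma~\ref{L:Q3_inner_sum_1st_term}. In parallel I would simplify $\btwo_{i,j}^{2h}$ by invoking Lemma~\ref{L:Q3_S10} with the roles of $i$ and $j$ swapped (its statement is manifestly symmetric under that swap). Since $0 \le i \le s \le j$ throughout $Q_4$, the hypothesis $s \le i < j$ holds only when $i = s$ and $j > s$, so $\btwo_{i,j}^{2h}$ is supported on the edge $\{i=s\}$ and vanishes for $i < s$.

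The remaining task is to evaluate the correction double sum arising in Step~1; denote it $\D_4(s,i,j)$. Imitating Proposition~\ref{P:Q3_double_sum_recur}, the multivariate \Ze{} algorithm should yield a first-order inhomogeneous recurrence for $\D_4$ in $i$, whose inhomogeneous term is a single sum handled by Lemma~\ref{L:inhomo3} of Appendix~\ref{S:A_GZ}. After computing $\D_4$ at a convenient starting value of $i$ (say $i = 0$, where the outer binomial makes the sum collapse) and iterating the recurrence, its closed form can be combined with the Vandermonde contribution and with the simplified $\btwo_{i,j}^{2h}$ to produce the desired cancellation throughout $Q_4$.

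The main obstacle lies in this last step. The $Q_3$ analysis was streamlined by the certificate $\R_{j'} = 0$ and by the inhomogeneous term being supported only on the diagonal (Proposition~\ref{P:Q3_inhom}); for $Q_4$ the correction activates on the \emph{opposite} side ($j' > i$ rather than $j' < i - s$), so a new recurrence certificate has to be produced, and the non-standard summation boundaries will generically contribute additional boundary sums. Showing that these boundary contributions combine with the $\binom{2s}{j}$ term from the closed-form piece to exactly cancel $\btwo_{i,j}^{2h}$ is the real work, and it will parallel, but be somewhat more intricate than, the analysis of Section~\ref{ss:Q3_h2}.
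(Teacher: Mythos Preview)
Your plan for $\btwo_{i,j}^{3v}$ is correct and is exactly what the paper does: apply the substitution $i\mapsto 2s-i$, $j'\mapsto s-j'$ to Proposition~\ref{P:Q3_sum_t} to split the inner $t$-sum into a closed-form piece plus a correction sum (Lemmas~\ref{L:Q4_inner_sum} and~\ref{L:Q4_btwov}), collapse the closed-form piece to $(-1)^{j+1}\binom{2s}{j}$ by Vandermonde, and then run a first-order recurrence in $i$ on the correction double sum.

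The genuine gap is your treatment of $\btwo_{i,j}^{2h}$. Lemma~\ref{L:Q3_S10} is stated and proved inside $Q_3$, where both indices lie in $[s,2s]$; its ``otherwise'' clause covers $j\ge i$ with $j\ge s$ tacit, not the case $j<s$. After your swap you need it for $0\le i<s$, and there the proof of Lemma~\ref{L:Q3_S10} no longer applies: the telescoping boundary contribution at $j'=0$ carries a factor $\binom{s}{j}$, which is nonzero once the swapped parameter drops below $s$, so the recurrence picks up an inhomogeneous term. Concretely, at $i=0$ only $i'=0$ survives and the sum in $\btwo_{i,j}^{2h}$ equals $\binom{j-1}{s}$, which is nonzero for every $j>s$; hence $\btwo_{0,j}^{2h}=(-1)^{s+j}\binom{2s}{j}\binom{j-1}{s}\ne 0$. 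Your conclusion that $\btwo_{i,j}^{2h}$ is supported on the edge $\{i=s\}$ is therefore false, and the cancellation you sketch cannot close as stated.

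The paper's remedy is not to evaluate $\btwo_{i,j}^{2h}$ in closed form at all, but to put it on the same footing as the correction double sum: show that $\btwo_{i,j}^{2h}$ satisfies a first-order inhomogeneous recurrence in $i$ (Lemma~\ref{L:Q4_S20_recur}) and that $\btwo_{i,j}^{3v}$ satisfies the negative of that same recurrence (Lemma~\ref{L:Q4_S1_recur}), both with inhomogeneous term
\[
(-1)^{i+j+s+1}\binom{2s}{i+1}\binom{2s-i-1}{j-i-1}\binom{j-i-2}{j-s-1}.
\]
The proof then finishes by matching initial values at $i=0$, where both sides are the nonzero quantity $\pm(-1)^{s+j}\binom{2s}{j}\binom{j-1}{s}$.
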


To prove Proposition~\ref{P:Q4_2ndh_3rdv}
we would like to express the \emph{inner sum} \wrt $t$ in $\btwo_{i,j}^{3v}$
in two parts, a simple closed form term and a second correction sum,
as we did for the third \qu $Q_3$. 
Unlike the third \qu, the correction term is for $j'>i$.

Below we first make this transform for the inner sum of $\btwo_{i,j}^{3v}$.
\begin{lemma}[Transform of the inner sum] \label{L:Q4_inner_sum}
  The inner sum of $\btwo_{i,j}^{3v}$ \wrt $t$ can be rewritten as
  \begin{multline*} 
          \sum_{t=0}^{i}  (-1)^{t}
        \frac{ 1} {2s - t }
        \binom{i}{t} \binom{s-t-1+j'}{ j'}
        =   \nonumber \\
        \frac{(-1)^{i+j'}}{2s - i}
        { \binom{s}{j'} } { \binom{2s}{i}}^{-1}
        +
         \sum_{t=1}^{j'}
    \frac{  (-1)^{t+1}  }{t}         %%%%%%%%%% change this one: why not in the test?
    \binom{s}{ t-1}
    \binom{s+j'-i-1}{ s+t-1}
    { \binom{j'}{ t} }^{-1}.
  \end{multline*}
\end{lemma}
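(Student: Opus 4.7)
The plan is to mirror the argument used to transform the analogous inner sum in the third \qu (Proposition~\ref{P:Q3_sum_t}), adapted to the different parameters that appear here. Let
\[
  \F(j', t) = \frac{(-1)^t}{2s-t} \binom{i}{t} \binom{s-t-1+j'}{j'}
\]
denote the summand of the inner sum. I would first apply \Ze's algorithm to obtain a second-order recurrence in $j'$ for $\sum_t \F(j', t)$. By analogy with Eq.~\eqref{E:Q3_sum_t_recur}, I expect the coefficient polynomial of the lowest-indexed term of this recurrence to have a zero at a specific value of $j'$ --- namely $j' = i$ --- which is precisely the breakpoint that separates the regime where the closed form alone suffices from the regime where the correction sum is needed.

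Next, I would verify directly that the proposed closed form
\[
  \frac{(-1)^{i+j'}}{2s-i} \binom{s}{j'} \binom{2s}{i}^{-1}
\]
satisfies the same second-order recurrence, and then check agreement at two base values of $j'$ (for instance $j' = 0$ and $j' = 1$), each of which reduces the original sum to a single \Go-summable sum in $t$. This establishes the identity on the range $j' \le i$. I would then observe that the proposed correction sum
\[
  C(j') = \sum_{t=1}^{j'} \frac{(-1)^{t+1}}{t} \binom{s}{t-1} \binom{s+j'-i-1}{s+t-1} \binom{j'}{t}^{-1}
\]
vanishes identically for $j' \le i$: the binomial $\binom{s+j'-i-1}{s+t-1}$ is zero throughout the summation range, since its upper index $s+j'-i-1 \le s-1$ is strictly smaller than the lower index $s+t-1$ for every $t \ge 1$. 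This is the Q4 analog of Lemma~\ref{L:Q3_correction_zero}, and confirms that adding $C(j')$ does not disturb the identity on $j' \le i$.

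To extend the identity to $j' > i$, I would apply \Go's algorithm to the summand of $C(j')$ and show that $C(j')$ satisfies a first-order inhomogeneous recurrence in $j'$ whose inhomogeneous term exactly compensates for the breakdown of the closed-form recurrence at $j' = i$. Combined with the second-order recurrence already established for the closed form, this will show that $T_{\mathrm{closed}}(j') + C(j')$ satisfies the same second-order recurrence as the original inner sum. The proof is then completed by a direct evaluation at the patch point $j' = i+1$: compute the original inner sum at $j' = i+1$ explicitly (the binomial constraints collapse most terms) and match it against the closed form plus the single surviving $t=1$ contribution of $C(i+1)$, analogous to Lemma~\ref{L:Q3_breakpoint}. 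The main obstacle I anticipate is precisely this patch calculation, because the closed form may appear degenerate at $j' = i+1$ (in particular $\binom{s}{j'}$ vanishes once $j' > s$) and careful arithmetic is required to reconcile the single-term correction with the original sum's value there.
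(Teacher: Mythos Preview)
Your strategy is sound and would work: it faithfully reproduces the machinery of Lemmas~\ref{L:Q3_sum_t_closed_form}, \ref{L:Q3_correction_zero}, \ref{L:Q3_correction}, and \ref{L:Q3_breakpoint} with the roles of the parameters adjusted, and the breakpoint you identify at $j'=i$ and the vanishing of the correction sum for $j'\le i$ are both correct. However, you are doing far more work than necessary. The paper's proof is a single change of variables: apply $i\to 2s-i$ and $j'\to s-j'$ directly to the already-established identity Eq.~\eqref{E:Q3_sum_t_corr} of Proposition~\ref{P:Q3_sum_t}. Under this substitution the left side becomes exactly the $Q_4$ inner sum, the closed-form term transforms into $\frac{(-1)^{i+j'}}{2s-i}\binom{s}{j'}\binom{2s}{i}^{-1}$ (using $\binom{s}{s-j'}=\binom{s}{j'}$, $\binom{2s}{2s-i}=\binom{2s}{i}$, and $(-1)^{4s-i-j'}=(-1)^{i+j'}$), and the correction sum transforms term-by-term into the stated $Q_4$ correction sum with range $1\le t\le j'$. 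So rather than re-running \Ze's algorithm, re-identifying the breakpoint, and re-doing the patch calculation, you can simply invoke the $Q_3$ result you already have. Your approach buys nothing extra here; the substitution is both shorter and avoids the ``careful arithmetic'' at the patch point that you yourself flag as the main obstacle.
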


\begin{comment}
  
The overall factor:
Q3:
\[
  s+i+j+j',
\]
Q4:
\[
  i+j+1.
\]
In Maple, the $+1$ absorbed $t+1$.
The factor was written as $i+j$.

\end{comment}

\begin{proof}
  Use the following changes of variables on Eq.~\eqref{E:Q3_sum_t_corr}:
  \begin{align} \label{E:Q4:change}
    \begin{aligned}
    i  & \to  2s -i, \\
    j' & \to s - j' .
    \end{aligned}
    \end{align}        
\end{proof}

Now the sum $\btwo_{i,j}^{3v}$
can be written in two parts:
\begin{lemma} \label{L:Q4_btwov}
  \begin{multline*}
    \btwo_{i,j}^{3v}
    = (-1)^{j+1}
    \binom{2s}{j} \\
    + 
    (-1)^{i+j}  (2s-i)
    \binom{2s}{ i}
    \sum_{j'=0}^{s}  
    \sum_{t=1}^{j'}
    \frac{  (-1)^{j'+t}  }{t}
    \binom{s}{ t-1}
    \binom{s+j'-i-1}{ s+t-1}
    \binom{s} {j-j'}
    { \binom{j'}{ t} }^{-1}.
  \end{multline*}
\end{lemma}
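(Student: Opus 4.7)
The plan is to derive Lemma~\ref{L:Q4_btwov} as a direct consequence of the inner-sum transform in Lemma~\ref{L:Q4_inner_sum}, followed by an application of the Chu-Vandermonde identity (Eq.~\eqref{E:binom_iden_V}) to the simpler of the resulting pieces. Nothing new about the sequence $\coe_{i,j}$ is needed, since Proposition~\ref{P:Q4_2ndh_3rdv} has already unpacked $\btwo_{i,j}^{3v}$ as an explicit double sum, and Lemma~\ref{L:Q4_inner_sum} handles the only non-elementary part, namely the inner sum over $t$.

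I would proceed in three short steps. First, I would take the explicit form of $\btwo_{i,j}^{3v}$ from Proposition~\ref{P:Q4_2ndh_3rdv}, pull the prefactor $(-1)^{i+j+1}(2s-i)\binom{2s}{i}$ outside, and extract the factor $(-1)^{j'}\binom{s}{j-j'}$ that does not depend on $t$. What remains inside is precisely the sum $\sum_{t=0}^{i}(-1)^{t}\frac{1}{2s-t}\binom{i}{t}\binom{s-t-1+j'}{j'}$, which by Lemma~\ref{L:Q4_inner_sum} splits into a closed-form piece $\frac{(-1)^{i+j'}}{2s-i}\binom{s}{j'}\binom{2s}{i}^{-1}$ plus a correction sum over $t\in\{1,\dots,j'\}$. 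Substituting this decomposition splits $\btwo_{i,j}^{3v}$ into two $j'$-sums.

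Second, I would evaluate the first piece. The prefactor $(2s-i)\binom{2s}{i}$ cancels the factors $(2s-i)^{-1}$ and $\binom{2s}{i}^{-1}$ inside, and the signs collapse as $(-1)^{i+j+1}\cdot(-1)^{i+j'}\cdot(-1)^{j'}=(-1)^{j+1}$, leaving $(-1)^{j+1}\sum_{j'=0}^{s}\binom{s}{j'}\binom{s}{j-j'}$. By the Chu-Vandermonde identity this equals $(-1)^{j+1}\binom{2s}{j}$, the first term in the statement.

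Third, for the second piece I would combine the remaining signs $(-1)^{i+j+1}\cdot(-1)^{j'}\cdot(-1)^{t+1}=(-1)^{i+j+j'+t}$, which matches the form $(-1)^{i+j}(-1)^{j'+t}$ displayed in the lemma, and the remaining binomial/rational factors reassemble into exactly the claimed correction double sum. The whole argument is essentially bookkeeping once Lemma~\ref{L:Q4_inner_sum} is in hand, so there is no genuine obstacle; the only thing to be careful about is sign accounting and keeping the ranges of the indices $j'$ and $t$ consistent with the vanishing of $\binom{s+j'-i-1}{s+t-1}$ when $t>j'$, which is what allows extending the inner $t$-sum up to $j'$ rather than a smaller bound.
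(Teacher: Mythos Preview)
Your proposal is correct and follows essentially the same route as the paper: substitute the transform of Lemma~\ref{L:Q4_inner_sum} into the explicit double sum for $\btwo_{i,j}^{3v}$, then evaluate the closed-form piece via Chu--Vandermonde (the paper phrases this last step as ``similarly to Lemma~\ref{L:Q3_inner_sum_1st_term}'', which itself invokes Eq.~\eqref{E:binom_iden_V}). Your remark about extending the $t$-range is unnecessary since Lemma~\ref{L:Q4_inner_sum} already gives the upper limit $j'$, but this does not affect the argument.
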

\begin{proof}
  The outer sum \wrt $j'$ can be similarly carried out on the first term of Lemma~\ref{L:Q4_inner_sum}
  as in Lemma~\ref{L:Q3_inner_sum_1st_term}.
\end{proof}

\begin{comment}

Hence we need to prove
\[
  a_{n-i, m-j}^{(S_{2v})' } + a_{n-i, m-j}^{(S_{2h}) '} = 0,
\]
where
\begin{align*}
  a_{n-i, m-j}^{(S_{2v})' } &= \sum_{j'=j-s}^{s} (-1)^{j-j'} \coe_{i, j'}' \binom{s}{j-j'}, \\
  a_{n-i, m-j}^{(S_{2h}) '} &=  (-1)^s \sum_{i'=0}^{i}    (-1)^{i-i'} \coe_{2s-j, s-i'}'  \binom{s}{i-i'} ,
\end{align*}
and here
$\coe'$ is $\coe$ without the first term (~Eq.~\eqref{E:hv2}).

And we have
\begin{align}
  &\binom{2s}{i} \sum_{j'=j-s}^{s} (-1)^{j'}  \binom{s-i-1+j'}{s} \binom{s}{j-j'}
  + (-1)^s \frac{j}{2s-j+1} \binom{2s}{j}
       \sum_{i'=0}^{i} \sum_{t=0}^{2s-j+1} (-1)^{t+i'}
       \frac{ t } {2s - t + 1 }
       \binom{2s-j+1}{t}  \binom{2s-t-i'} {s-i'}  \binom{s} {i-i'}
       = 0,  \label{E:S2vS2h_4I}  \\
  &     (-1)^s  \binom{2s}{j}  \sum_{i'=0}^{i}  (-1)^{i'}
    \binom{j-i'-1}{s}  \binom{s} {i-i'}
       + \frac{ 2s - i }{i+1} \binom{2s}{i}
    \sum_{j'=j-s}^{s} \sum_{t=0}^{i+1}  (-1)^{t+j'}  
    \frac{ t } {2s - t + 1 }
    \binom{i+1}{t} \binom{s-t+j'}{ j'} \binom{s}{j-j'}
    = 0 .  \label{E:S2vS2h_4II}
\end{align}

    Eq.~\eqref{E:S2vS2h_4I} can be transformed to Eq.~\eqref{E:S2vS2h_4II}
    by making the changes
    \begin{align}
      i &\rightarrow 2s - j, \\
      i' &\rightarrow s - j' .
      \end{align}
      Hence if we prove one identity, the other is also proved.
\end{comment}

We prove Proposition~\ref{P:Q4_2ndh_3rdv}
by showing that
$\btwo_{i,j}^{2h}$ and
$-\btwo_{i,j}^{3v}$
satisfy the same recurrence and have the same initial conditions.
The following Lemma shows the recurrence satisfied by $\btwo_{i,j}^{2h}$.

\begin{lemma} [Recurrence of $\btwo_{i,j}^{2h}$] \label{L:Q4_S20_recur}
  The term $\btwo_{i,j}^{2h}$ satisfies the following first order inhomogeneous recurrence
  \wrt $i$:        
\begin{equation}  \label{E:Q4_S20_recur}
  \btwo_{i+1,j}^{2h} - \btwo_{i,j}^{2h}
  =
   \left( -1 \right) ^{i+j+s+1}{2\,s\choose i+1}{2\,s-i-1\choose j-i-1}{
j-i-2\choose j-s-1} .
 \end{equation}
\end{lemma}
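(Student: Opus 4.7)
}
The strategy is to isolate the genuine dependence on $i$ and then feed the resulting single sum to Zeilberger's algorithm. Writing
\[
  \btwo_{i,j}^{2h} = (-1)^{s+i+j}\binom{2s}{j}\,T(i), \qquad T(i) = \sum_{i'=0}^{s} (-1)^{i'} \binom{j-i'-1}{s}\binom{s}{i-i'},
\]
and observing that the prefactor changes sign under $i \mapsto i+1$, the asserted recurrence is equivalent to a first-order inhomogeneous recurrence of the form
\[
  T(i+1) + T(i) = g(i,j,s),
\]
where $g(i,j,s)\binom{2s}{j} = \binom{2s}{i+1}\binom{2s-i-1}{j-i-1}\binom{j-i-2}{j-s-1}$ (up to the overall sign $(-1)^{i+j+s+1}$). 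Reducing to this form is a harmless one-line computation.

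Next I would apply Zeilberger's creative telescoping to the summand $F(i,i')=(-1)^{i'}\binom{j-i'-1}{s}\binom{s}{i-i'}$ with parameter $i$ and running index $i'$, seeking a rational certificate $\R(i,i')$ for which $F(i+1,i')+F(i,i')=\Delta_{i'}\bigl(\R(i,i')F(i,i')\bigr)$. Since $F$ is proper hypergeometric in $(i,i')$, such a telescoping is guaranteed at order one. Summing this identity over $0\le i'\le s$ telescopes the right-hand side to a boundary contribution $\bigl[\R(i,i')F(i,i')\bigr]_{i'=0}^{i'=s+1}$, which by the framework of Lemma~\ref{L:inhomo1} in Appendix~\ref{S:A_GZ} is precisely the inhomogeneous term $g(i,j,s)$.

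The boundary evaluation is where the real work lies. At $i'=s+1$ the factor $\binom{s}{i-s-1}$ vanishes for all $i\le s$, which covers the entire fourth \qu, so the upper boundary contributes nothing. The lower boundary, at $i'=0$, gives $\R(i,0)\binom{j-1}{s}\binom{s}{i}$, and I would manipulate this using the standard identities $\binom{j-1}{s}=\tfrac{j-s}{j}\binom{j}{s}$ and Chu--Vandermonde-style rearrangements to extract the three-binomial product $\binom{2s}{i+1}\binom{2s-i-1}{j-i-1}\binom{j-i-2}{j-s-1}$.

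The main obstacle is the algebraic simplification of the boundary expression: the certificate produced by Zeilberger's algorithm will typically be a ratio of polynomials in $(i,j,s,i')$, and I expect the cleanest derivation to come from a judicious pre-cancellation of common factors before evaluating at $i'=0$, together with recognizing the combination $\binom{2s-i-1}{j-i-1}\binom{j-i-2}{j-s-1}$ as the natural factorization. As a final sanity check I would verify the recurrence at an edge value such as $j=s+1$ or $i=0$, where both $T(i)$ and the right-hand side collapse to a single binomial term, confirming that the signs and magnitudes agree with Eq.~\eqref{E:Q4_S20_recur}.
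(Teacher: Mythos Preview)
Your plan is essentially the paper's own proof: apply Zeilberger's algorithm to the summand with $i$ as the recurrence parameter (the paper keeps the prefactor $(-1)^{s+i+j}$ inside the summand rather than factoring it out, and obtains certificate $\R=(j-i')(i-s-i')/(i+1-i')$), telescope over $i'$, note the upper boundary term $G(i,s+1)$ vanishes, and read off the inhomogeneous term from $-G(i,0)$. One minor overreach: a first-order telescoper is not \emph{guaranteed} for an arbitrary proper hypergeometric summand---Zeilberger only promises some finite order---but here order one does indeed suffice, so your plan goes through.
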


\begin{proof}
  The summand of $\btwo_{i,j}^{2h}$
  \[
    \F(i, j') = (-1)^{s+i+j+j'}
    \binom{2s} { j}
    \binom{j-i'-1}{s}
    \binom{s} {i-i'}
  \]
  obeys a first order recurrence shown by the \Ze's algorithm
  \[
    (-i-1+j) \F(i, j') + (i-j+1) \F(i+1, j')
    = \G(i, j'+1) - \G(i, j'),
  \]
  with the certificate as
  \[
    \R = \frac{ (j-j') (-s+i-j') } { i+1-j' } .
  \]
  Sum up with $j'$ from $0$ to $s$ by telescoping
  to obtain Eq.~\eqref{E:Q4_S20_recur},
  as the value of $\G(i, s+1) - \G(i, 0) = - \G(i, 0)$,
  since $\G(i, s+1) = 0$.
\end{proof}

The recurrence satisfied by $\btwo_{i,j}^{3v}$ is given below.
\begin{lemma} [Recurrence of $\btwo_{i,j}^{3v}$] \label{L:Q4_S1_recur}
  The double sum in $\btwo_{i,j}^{3v}$ of Lemma~\ref{L:Q4_btwov} satisfies the following first order inhomogeneous recurrence \wrt $i$:
\begin{equation}  \label{E:Q4_S1_recur}
  \btwo_{i+1,j}^{3v} - \btwo_{i,j}^{3v}
  =
   \left( -1 \right) ^{i+j+s}{2\,s\choose i+1}{2\,s-i-1\choose j-i-1}{
j-i-2\choose j-s-1} .
 \end{equation}
  
\end{lemma}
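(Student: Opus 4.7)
The plan is to apply multivariable \Ze's algorithm to the double summand in Lemma~\ref{L:Q4_btwov}, exactly as was done for $\btwo_{i,j}^{2h}$ in Lemma~\ref{L:Q4_S20_recur}, and then telescope to extract the stated inhomogeneous term. First I observe that the leading piece $(-1)^{j+1}\binom{2s}{j}$ of $\btwo_{i,j}^{3v}$ is independent of $i$, so it cancels in the difference $\btwo_{i+1,j}^{3v}-\btwo_{i,j}^{3v}$; only the double sum contributes. Writing its summand as
\[
  \F(i;j',t) = (-1)^{i+j+j'+t}(2s-i)\binom{2s}{i}\frac{1}{t}\binom{s}{t-1}\binom{s+j'-i-1}{s+t-1}\binom{s}{j-j'}\binom{j'}{t}^{-1},
\]
I expect the algorithm to produce rational certificates $\R_{j'}$ and $\R_t$ such that
\[
  \F(i+1;j',t) - \F(i;j',t) = \Delta_{j'}\!\bigl[\R_{j'}\,\F(i;j',t)\bigr] + \Delta_t\!\bigl[\R_t\,\F(i;j',t)\bigr],
\]
in exact analogy with the creative-telescoping identity constructed in the proof of Proposition~\ref{P:Q3_double_sum_recur}.

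Next I would sum this identity over the triangular region $1 \le t \le j' \le s$. Because the $t$-range depends on $j'$, the telescoping is nonstandard, and the boundary recipe of Lemma~\ref{L:inhomo3} applies: the surviving boundary contributions come from evaluations at $t=1$, at $t=j'+1$, at $j'=0$, and at $j'=s+1$. I expect most of these to vanish outright, since the factor $\binom{s+j'-i-1}{s+t-1}$ forces zeros when $j'+t$ is too small, while $\binom{j'}{t}^{-1}$ combined with $\binom{s}{t-1}$ controls the other extreme. What should remain is a single one-dimensional residue, most plausibly the $t=1$ edge summed over $j'$.

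Finally, I would evaluate this residual single sum and verify that it equals
\[
  (-1)^{i+j+s}\binom{2s}{i+1}\binom{2s-i-1}{j-i-1}\binom{j-i-2}{j-s-1}.
\]
This step is likely to be the main obstacle: the target is a product of three binomials with shifted arguments, which suggests the residual sum is itself \Go-summable, or else collapses through a Chu-Vandermonde-type identity (such as Eq.~\eqref{E:binom_iden_V}) after an index shift. The shape of the answer is not coincidental: it is precisely the negative of the inhomogeneous term in Lemma~\ref{L:Q4_S20_recur}, which is exactly what is required for the cancellation $\btwo_{i,j}^{2h}+\btwo_{i,j}^{3v}=0$ of Proposition~\ref{P:Q4_2ndh_3rdv}, once matching initial conditions at a convenient value of $i$ are verified separately.
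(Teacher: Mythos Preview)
Your plan is correct and essentially identical to the paper's proof: apply multivariable \Ze{} to the summand $\F(i;j',t)$, telescope over the triangular region, observe that most boundary pieces vanish leaving only the $t=1$ edge as a single sum over $j'$, and evaluate that residue by \Go. One small correction: the region $\sum_{j'=0}^{s}\sum_{t=1}^{j'}$ is the shape of Lemma~\ref{L:inhomo2}, not Lemma~\ref{L:inhomo3}; the paper finds the explicit certificates $\R_{j'}=0$ and $\R_t = -\dfrac{(j'-t+1)(s+t-1)}{(-s-j'+i+1)(i+1)}$, so that $\R_{j'}\F\equiv 0$ and $\R_t\F\big|_{t=j'+1}=0$, leaving exactly the single sum $-\sum_{j'=1}^{s}\R_t\F\big|_{t=1}$, which is indeed \Go-summable as you anticipated.
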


\begin{proof}
  The summand $\F$ of the double summation in $\btwo_{i,j}^{3v}$
  \[
    \F(i; j', t) =
     (-1)^{i+j+j'+t}  
    \frac{ 2s-i }{t}
    \binom{2s}{ i}
    \binom{s}{ t-1}
    \binom{s+j'-i-1}{ s+t-1}
    \binom{s} {j-j'}
    { \binom{j'}{ t} }^{-1}
  \]
  satisfies a first order inhomogeneous recurrence by the multi-variable multisum version of \Ze's algorithm
  \[
    -\F(i, j', t) + -\F(i+1, j', t) = \Inhom(s,i,j),
  \]
  with the certificates
  \begin{align*}
    \R_{j'} &= 0, \\
    \R_{t} &=
             \frac{ -(j'-t+1) (s+t-1) } { (-s-j'+i+1) (i+1) }.
  \end{align*}

  Sum up both sides on $j'$ and $t$,
  the right side telescopes, with two single sums left (Lemma~\ref{L:inhomo2} in Appendix~\ref{S:A_GZ}).
  In this case, since $ \R_{j'} = 0$
  and $\R_{t} \F (t=j'+1) = 0$,
  there is only one single sum left:
  \begin{align*}
    RHS &= -\sum_{j'=1}^{s} \R_{t} \F (t=1)  \\
    &= (-1)^{i+j} \sum_{j'=1}^{s}  (-1)^{j'}
    \frac{s (i-2 s) }
     { (-s-j'+i+1)(i+1) }
     \binom{2s}{ i}
     \binom{s+j'-i-1}{ s}
     \binom{s} {j-j'} .
  \end{align*}

  The summand is \Go-summable,
  with the antidifference as
  \[
    (-1)^{i+j+j'+1}
    \binom{2s}{ i+1}
    \binom{s+j'-i-2} {j-i-1}
    \binom{j-i-2} {j-j'} .
  \]
  Since the antidifference vanishes at $j'=1$,
  set $j'=s+1$ in the antidifference to get the sum,
  which proves Eq.\eqref{E:Q4_S1_recur}.
\end{proof}

\begin{proof}[Proof of Proposition~\ref{P:Q4_2ndh_3rdv}]
  
  Lemma~\ref{L:Q4_S20_recur} and
  Lemma~\ref{L:Q4_S1_recur}
  show that
  $\btwo_{i,j}^{2h}$ and
  $-\btwo_{i,j}^{3v}$ satisfy the same first order inhomogeneous recurrence.
  We only need to check that $\btwo_{i,j}^{2h}$ and
  $-\btwo_{i,j}^{3v}$ share the same initial value.
  At $i=0$,
  \[
   \btwo_{0,j}^{2h} = (-1)^{s+j} \binom{2s}{j} \binom{j-1}{s} ,
\]
while the value of $\btwo_{i,j}^{3v}$ when $i=0$ is
\[
  \btwo_{0,j}^{3v}
  = (-1)^{j} \sum_{j'=0}^{s}   (-1)^{j'}  
  \binom{s-1+j'}{ j'} \binom{s}{j-j'}
  =  (-1)^{s+j+1} \binom{2s}{j} \binom{j-1}{s} = -\btwo_{0,j}^{2h}.
\]
This proves Proposition~\ref{P:Q4_2ndh_3rdv}.
\end{proof}

We summarize the results for the fourth \qu.
\begin{proof}[Partial proof of the main theorem in the fourth \qu{}]
  Lemma~\ref{L:Q4_1st} proves the identities of Eq.~\eqref{E:Q4_1st},
  and Proposition~\ref{P:Q4_2ndh_3rdv} proves
  the second identity of \eqref{E:Q4_2nd_3rd}.
  By symmetry, the first identity of \eqref{E:Q4_2nd_3rd}
  can be proved in a similar way.
  Put these results together, the second identity
  of Eq.~\eqref{E:goal_lhs} is proved
  in the fourth \qu.
\end{proof}

By symmetry, a similar conclusion is reached for the second \qu $Q_2$.

\subsection{The right hand side} \label{S:RHS}
In the previous sections we investigated the values on the left side of equations 
when the recurrence identities from the sets \POne and \PTwo are added with proper weights.
In this section we calculate the sum of these numbers on the right hand side.
By symmetry we only need to look at the vertical contributions from
\POne and \PTwo. In the following the superscript $v$ is omitted. 

For two-dimensional lattice strips, the right hand side of the recurrences
depend on the boundary conditions of the horizontal (vertical) direction
if the recurrence is in the vertical (horizontal) direction.
For recurrence in the vertical direction,
the right hand side is
$(2n + \text{constant})^s$ (Eq.~\eqref{E:rec1})~\cite{Kong2024}.
Below we generalize slightly and assume the right hand side of the vertical recurrence is
\[
  (\ca n + \cb)^s.
\]
We will show that for the diagonal recurrence in the square,
the right hand side depends only on $\ca$ and $s$,
and not on $n$, $m$, nor $\cb$.

We discuss the sets \POne and \PTwo separately.

\subsubsection{The right hand side from \POne}

For vertical recurrences, the right hand side constant
is the same for each \emph{column} (they has the value of lattice width $n$).
The following Lemma gives the sum of the coefficients $\cob_{i,j}$ of the recurrence
identities in \POne at a given column $i$ (Figure~\ref{F:S1} and Figure~\ref{F:S1s5},  red arrowheads),
$x(i)$ (for $0 \le  i \le s$) and $x'(i)$ (for $s <  i \le 2s$).
\begin{lemma} \label{L:RHS_P1_sum_coe} 
  \begin{align}  \label{E:RHS_P1_sum_of_col}
    \begin{aligned}
    \x(i) = \sum_{j'=0}^{i}
    (-1)^{j'}
    \binom{s}{j'}
    &=
      (-1)^i \binom{s-1}{ i} , \quad 0 \le  i \le s, \\
   \x'(i) =  \sum_{j'=i-s}^{s} (-1)^{j'}
    \binom{s}{j'}
    &=
      (-1)^{s+i} \binom{s-1}{ 2s - i} ,  \quad s <  i \le 2s.
          \end{aligned}
      \end{align} 
\end{lemma}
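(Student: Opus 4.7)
The plan is to establish both identities as partial sums of alternating binomial coefficients via a Pascal–telescoping argument, then deduce the second identity from the first by a symmetry substitution $j'\mapsto s-j'$.

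First, I would handle $\x(i)$ for $0\le i\le s$. Applying Pascal's rule $\binom{s}{j'}=\binom{s-1}{j'}+\binom{s-1}{j'-1}$ to each summand gives
\[
(-1)^{j'}\binom{s}{j'}=(-1)^{j'}\binom{s-1}{j'}-(-1)^{j'-1}\binom{s-1}{j'-1},
\]
so the partial sum telescopes:
\[
\x(i)=\sum_{j'=0}^{i}(-1)^{j'}\binom{s}{j'}=(-1)^{i}\binom{s-1}{i}-(-1)^{-1}\binom{s-1}{-1}=(-1)^{i}\binom{s-1}{i},
\]
using $\binom{s-1}{-1}=0$. This is the first claim.

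Next, for $s<i\le 2s$, I would reduce $\x'(i)$ to the first case using the symmetry $\binom{s}{j'}=\binom{s}{s-j'}$. Setting $k=s-j'$, the summation range $i-s\le j'\le s$ transforms to $0\le k\le 2s-i$, giving
\[
\x'(i)=\sum_{k=0}^{2s-i}(-1)^{s-k}\binom{s}{k}=(-1)^{s}\sum_{k=0}^{2s-i}(-1)^{k}\binom{s}{k}.
\]
Since $0\le 2s-i<s$ in this range, the inner partial sum is covered by the already-established first identity with $i$ replaced by $2s-i$, producing $(-1)^{2s-i}\binom{s-1}{2s-i}$. Multiplying by $(-1)^{s}$ and noting $(-1)^{3s-i}=(-1)^{s+i}$ yields $\x'(i)=(-1)^{s+i}\binom{s-1}{2s-i}$, which is the second claim.

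There is no real obstacle here; both statements are classical partial-sum identities, and the whole argument is a one-step telescoping plus a symmetry substitution. The only minor point worth noting is the boundary checks (that $\binom{s-1}{-1}=0$ kills the lower telescoping endpoint, and that $2s-i$ lies in the valid range $[0,s)$ when $s<i\le 2s$, so the first identity applies to the transformed sum).
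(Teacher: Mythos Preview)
Your proof is correct and follows essentially the same route as the paper: the paper phrases the first identity as Gosper-summability with antidifference $(-1)^{j'}\binom{s-1}{j'-1}$ (its Lemma on this antidifference), which is precisely your Pascal-rule telescoping, and it derives the second identity from the first by the same substitution $j'\mapsto s-j'$ (equivalently $i\mapsto 2s-i$). The only cosmetic difference is that you invoke Pascal's rule directly while the paper names it as an antidifference.
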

\begin{proof}
  The summand of the first equation is \Go-summable,
  with the \anti shown in Lemma~\ref{L:anti2}.
  
  The second identity can be obtained from the first
  by changing the variable
  $i \to 2s -i$ and $j' \to s - j'$.
\end{proof}

Hence for vertical recurrences from \POne,
at column $n-i$  when $0 \le i \le s$, the contributions from the right side are
\[
  \X(i) =
  c(n-i)^s
  \x(i)
  = (-1)^i \binom{s-1}{ i}  c(n-i)^s.
  \]
Note that when $i=s$, $X(s) = 0$.
  
When   $s <  i \le 2s$,
\[
  \X'(i) =
  c(n-i)^{s}
  \x'(i)
  = (-1)^{s+i} \binom{s-1}{ 2s - i}  c(n-i)^s .
\]

The total contributions of verticals from \POne (Figure~\ref{F:S1} and Figure~\ref{F:S1s5}, red arrowheads)
are the sum of $\X(i)$ and $\X'(i)$ over all the columns in the square:
\[
  \sum_{i'=0}^{s-1}  \X(i')
  + \sum_{i'=s+1}^{2s}  \X'(i') .
\]

We are going to calculate this sum.
\begin{prop}[The right hand side of vertical recurrences in \POne] \label{P:RHS_P1_V_gen}
  For $c(n) = \ca n + \cb$,
   \begin{equation} \label{E:RHS_P1_V_gen}
    \sum_{i'=0}^{s-1}  \X(i') + \sum_{i'=s+1}^{2s}  \X(i') = \ca^s (s+1)! .
  \end{equation}
\end{prop}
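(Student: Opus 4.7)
The plan is to reduce the left side of Eq.~\eqref{E:RHS_P1_V_gen} to two $(s-1)$-th forward differences of polynomials of degree $s$, and then to exploit the fact that such a difference kills any polynomial of degree less than $s-1$. Only the two leading coefficients of each binomial expansion will survive, and the $n$- and $\cb$-dependent pieces will cancel between the two sums, leaving $\ca^s(s+1)!$.

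First I would substitute $i\mapsto 2s-i$ in the second sum. Using the symmetry $\binom{s-1}{2s-i}\mapsto\binom{s-1}{i}$ and $(-1)^{s+(2s-i)}=(-1)^{s+i}$, both sums then range over $0\le i\le s-1$, and the left hand side of Eq.~\eqref{E:RHS_P1_V_gen} becomes
\[
   \sum_{i=0}^{s-1} (-1)^{i} \binom{s-1}{i} \bigl(\ca(n-i)+\cb\bigr)^{s}
   + (-1)^{s}\sum_{i=0}^{s-1}(-1)^{i}\binom{s-1}{i}\bigl(\ca(n-2s+i)+\cb\bigr)^{s}.
\]

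Next I would introduce the auxiliary sum $T(a,\mu):=\sum_{i=0}^{s-1}(-1)^{i}\binom{s-1}{i}(a+\mu i)^{s}$, expand $(a+\mu i)^s$ by the binomial theorem, and apply the classical identity
\[
   \sum_{i=0}^{s-1}(-1)^{i}\binom{s-1}{i}\,i^{k} = (-1)^{s-1}(s-1)!\,\stirling{k}{s-1},
\]
which vanishes for $k\le s-2$ and reduces to $(-1)^{s-1}(s-1)!$ and $(-1)^{s-1}(s-1)!\binom{s}{2}$ at $k=s-1$ and $k=s$, respectively. These values collapse $T(a,\mu)$ to the closed form $T(a,\mu) = (-1)^{s-1} s!\,\mu^{s-1}\bigl(a + \tfrac{s-1}{2}\mu\bigr)$, a linear function of $a$.

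Finally I would apply this formula to each sum: the first with $(a,\mu)=(\ca n+\cb,-\ca)$, giving $s!\,\ca^{s-1}\bigl[\ca n+\cb-\tfrac{s-1}{2}\ca\bigr]$, and the second (including the $(-1)^{s}$ prefactor) with $(a,\mu)=(\ca(n-2s)+\cb,\ca)$, giving $-s!\,\ca^{s-1}\bigl[\ca(n-2s)+\cb+\tfrac{s-1}{2}\ca\bigr]$. Adding them, the $n$- and $\cb$-dependent terms cancel exactly, and what remains simplifies to $s!\,\ca^{s-1}\bigl[2s\ca-\ca(s-1)\bigr] = \ca^{s}(s+1)!$. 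The only delicate point is the sign bookkeeping through the reindexing and Stirling expansion; once $T(a,\mu)$ is in closed form the cancellation of the non-constant terms is automatic, which is exactly what makes the right hand side independent of $n$ and $\cb$ as the proposition asserts.
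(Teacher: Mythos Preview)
Your proof is correct and follows essentially the same route as the paper: the same reindexing $i\mapsto 2s-i$ to merge the two sums, the same binomial expansion, and the same Stirling-number identity (the paper's Lemma~\ref{L:RHS_Stirling}) to isolate the two surviving powers of $i$. The only difference is organizational: you package the core computation into the auxiliary function $T(a,\mu)=(-1)^{s-1}s!\,\mu^{s-1}\bigl(a+\tfrac{s-1}{2}\mu\bigr)$ and then evaluate it twice, whereas the paper expands $c(n-i)^s+(-c(n-2s+i))^s$ directly and tracks the $t=0$ and $t=1$ contributions separately before combining.
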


First we prove the following identities:
\begin{lemma} \label{L:RHS_Stirling}
  \begin{equation}
    \sum_{i=0}^{s} (-1)^{i+s} i^{s+1-t}
    \binom{s}{i}
    =
    \begin{cases}
       \frac{1}{2} s  (s+1)! , & \quad t=0, \\
       s! ,                  , & \quad t=1, \\
       0,                    , & \quad t>1.
     \end{cases}
     \end{equation}
\end{lemma}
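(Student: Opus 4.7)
The plan is to recognize the sum as a (signed) finite-difference operator applied to the monomial $i^{s+1-t}$ at $i=0$, namely
\[
  \sum_{i=0}^{s} (-1)^{i+s} \binom{s}{i} i^{n} \;=\; \Delta^{s} x^{n}\Big|_{x=0},
\]
and then invoke the classical Stirling-number identity
\[
  \sum_{i=0}^{s} (-1)^{s-i} \binom{s}{i} i^{n} \;=\; s!\,\stirling{n}{s},
\]
where $\stirling{n}{s}$ denotes the Stirling number of the second kind. This identity is immediate from expanding $i^{n}$ in the falling-factorial basis (or, equivalently, from the inclusion--exclusion count of surjections $[n]\to[s]$).

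With $n = s+1-t$, the three cases are then read off directly from well-known values of $\stirling{n}{s}$. For $t>1$ we have $n<s$ and hence $\stirling{n}{s}=0$, giving the vanishing case. For $t=1$ we have $n=s$ and $\stirling{s}{s}=1$, producing $s!$. For $t=0$ we have $n=s+1$ and $\stirling{s+1}{s}=\binom{s+1}{2}=\tfrac{s(s+1)}{2}$, producing $\tfrac{1}{2} s (s+1)!$.

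The main obstacle is essentially cosmetic: one must have the Stirling identity available. If the author prefers a self-contained derivation avoiding Stirling numbers, the alternative is to argue by induction on $s$ using the telescoping identity $\Delta^{s} = \Delta \cdot \Delta^{s-1}$ together with the fact that $\Delta^{s}$ annihilates every polynomial of degree less than $s$ and sends $x^{s}$ to $s!$; the cases $t>1$, $t=1$, $t=0$ then correspond respectively to degree $<s$, degree $=s$, and degree $=s+1$ (where one additionally uses $\Delta^{s} x^{s+1}\big|_{x=0} = \tfrac{s(s+1)!}{2}$, obtained from expanding $x^{s+1} = x\cdot x^{s}$ and a single application of the Leibniz-style rule for $\Delta$). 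Either route makes the lemma a short, essentially computational consequence of standard finite-calculus machinery.
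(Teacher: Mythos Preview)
Your proof is correct and follows essentially the same approach as the paper: both invoke the identity $\sum_{i=0}^{s}(-1)^{s-i}\binom{s}{i}i^{n}=s!\,\stirling{n}{s}$ and then read off the three cases from $\stirling{s+1}{s}=\binom{s+1}{2}$, $\stirling{s}{s}=1$, and $\stirling{n}{s}=0$ for $n<s$. Your additional framing via $\Delta^{s}x^{n}\big|_{x=0}$ and the sketched Stirling-free alternative are extra context, but the core argument coincides with the paper's.
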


\begin{proof}
  These identities are special cases of
  a known identity involving Stirling number of the second kind:
  \[
    \sum_{i=0}^{s} (-1)^{i+s} i^{p} \binom{s}{i}
    = s! \stirling{p}{s},
  \]
  where $\stirling{p}{s}$ is the Stirling number of the second kind
  \cite[Eq.~(6.19)]{Graham1994}.
  Let $p=s+1$ in above identity and use the fact that
  \[
    \stirling{s+1}{s} = \binom{s+1}{2}
  \]
  to obtain the first identity for $t=0$.
  
  Let $p=s$ and use
  \[
    \stirling{s}{s} = 1
  \]
  to obtain the second identity  for $t=1$.

  When $t>1$, we have $s+1-t < s$. In this case the Stirling number of the second kind vanishes.
  
\end{proof}

With Lemma~\ref{L:RHS_Stirling} we proceed to prove Proposition~\ref{P:RHS_P1_V_gen}.
\begin{proof}[Proof of Proposition~\ref{P:RHS_P1_V_gen}]
  By making a change of summation variable in the second sum $i' \to 2s -i'$,
  the two sum in Eq.\eqref{E:RHS_P1_V_gen} can be combined into a single sum
  \begin{equation}  \label{E:RHS_P1_total}
    \sum_{i'=0}^{s-1}  \X(i')
    + \sum_{i'=s+1}^{2s}  \X'(i')
    =
    \sum_{i'=0}^{s-1}
      (-1)^i
      \binom{s-1} {i}
      \left[  c(n-i)^s + \left( -c(n-2s+i) \right)^s \right] .
  \end{equation}

  If we write
  $
    c_0 = c(n),
  $
  then
  \begin{align*}
    c(n-i) &= c_0 - \ca i, \\
    -c(n-2s+i) &= 2 \ca s - c_0 - \ca i.
  \end{align*}
  Expand $ c(n-i)^s $ and $ \left( -c(n-2s+i) \right)^s $ in Eq.~\eqref{E:RHS_P1_total}
  to obtain a double summation
  \[
    \sum_{i=0}^{s-1} (-1)^i
    \binom{s-1}{i}
    \sum_{t=0}^{s} 
    \binom{s}{t}
    \left(
      c_0^t + (2 \ca s-c_0)^t 
    \right)
    (-\ca i)^{s-t} .
  \]

  Lemma~\ref{L:RHS_Stirling}
  shows that for the outer summation over $i$,
  the sum does not vanish
  only for two values of $t$ of the inner sum, i.e., $t=0$ and $t=1$.

  When $t=0$, we have
  \[
  \sum_{i=0}^{s-1} (-1)^{i+s} i^{s}
  \binom{s-1}{i}
  = - \frac{1}{2}
  (s-1) s!
  \]
  by Lemma~\ref{L:RHS_Stirling} after making the variable change
  from $s$ to $s-1$.

  At $t=1$,
  the sum over $i$ takes a value of $(s-1)!$.
  The summation of Eq.~\eqref{E:RHS_P1_total} is thus the sum of these two values
  only, multiplied by the factors in the inner sum that are independent of $i$.
  After bringing back these factors, the two terms for the sum are
  \begin{align*}
    & -  \frac{(s-1) s!}{2} \times 2 \ca^{s} , &\quad t=0, \\
    &  (s-1)! \times 2 \ca^{s} s^2,            &\quad t=1 .
  \end{align*}
  Hence the summation in Eq.~\eqref{E:RHS_P1_V_gen} takes the value as the sum of
  these two terms, which proves Proposition~\ref{P:RHS_P1_V_gen}.
\end{proof}

\subsubsection{The right hand side from \PTwo}

For the recurrence identities in the vertical direction from the set \PTwo,
at column $i$  when $0 \le i \le s$, the contributions to the right hand side are
(Figure~\ref{F:S2} and Figure~\ref{F:S2s5}, red arrowheads)
\[
  \Y(i) =
  c(n-i)^s
  \sum_{j'=i+1}^{s} 
  \coe(i,j') ,
  \]
and  
when   $s <  i \le 2s$,
\[
  \Y'(i) =
  (-1)^s c(n-i)^{s}
  \sum_{j'=0}^{i-s-1}
  \coe(2s-i, s-j').
\]

The total contributions of verticals from \PTwo 
are
\[
  \sum_{i'=0}^{s-1}  \Y(i')
  + \sum_{i'=s+1}^{2s}  \Y(i') .
\]

For the three parts of $\coe_{i,j}$, the first part does not contribute,
since $j'>i$ in the sums of $\Y(i)$ and
$s-j' > 2s - i$ in the sums of $\Y'(i)$.
As for the binomial coefficients $\cob_{i,j}$
in the set \POne (Eq.~\eqref{E:RHS_P1_sum_of_col}),
we will derive analogous sums
for the second and third parts of $\coe_{i,j}$
for each column.

Denote these sums as $\y^{(2)}(i)$ and $\y^{(3)}(i)$ respectively for column $i$.
For the second part of $\coe_{i,j}$,
the column sum of the coefficients in the first \qu is given by
\begin{lemma} \label{L:RHS_P2_h2}
  \begin{equation} \label{E:RHS_P2_h2}
    \y^{(2)}(i) =
    (-1)^{i+1}
    \binom{2s} {i}
    \sum_{j'=i+1}^{s}
    \binom{s+j'-i-1}{ s}
    =
    (-1)^{i+1}
    \binom{2s} {i}
    \binom{2s-i}{ s+1 } .
  \end{equation}
\end{lemma}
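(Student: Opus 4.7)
The plan is to pull the factor $(-1)^{i+1}\binom{2s}{i}$ outside the sum, since it does not depend on the summation index $j'$, and reduce the statement to the purely combinatorial identity
\[
  \sum_{j'=i+1}^{s} \binom{s+j'-i-1}{s} = \binom{2s-i}{s+1}.
\]
First I would perform the substitution $k = j'-i-1$, which shifts the range to $0 \le k \le s-i-1$ and rewrites the sum in the familiar form $\sum_{k=0}^{s-i-1}\binom{s+k}{s}$.

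The main step is an application of the hockey stick (or ``parallel summation'') identity $\sum_{k=0}^{N}\binom{s+k}{s} = \binom{s+N+1}{s+1}$, taken with $N = s-i-1$, which immediately produces $\binom{2s-i}{s+1}$. For self-containment this can be verified in one line by telescoping: Pascal's rule gives $\binom{s+k}{s} = \binom{s+k+1}{s+1} - \binom{s+k}{s+1}$, so the partial sum collapses to $\binom{2s-i}{s+1} - \binom{s}{s+1} = \binom{2s-i}{s+1}$, using $\binom{s}{s+1}=0$. Reinserting the prefactor $(-1)^{i+1}\binom{2s}{i}$ completes the proof.

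I do not anticipate any real obstacle: the entire content is a direct hockey stick evaluation. The only detail worth checking is the degenerate boundary case $i=s$, where the summation range $j' = s+1,\dots,s$ is empty and the claimed closed form $\binom{s}{s+1}=0$ agrees, so the identity holds uniformly for $0 \le i \le s$.
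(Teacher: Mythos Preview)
Your proposal is correct and is essentially the same argument as the paper's. The paper phrases it as ``the summand is Gosper-summable with antidifference $\binom{s+b+j'}{j'-1}$'' (Lemma~\ref{L:anti1}), which after the shift $k=j'-i-1$ is exactly your Pascal-rule telescoping $\binom{s+k}{s}=\binom{s+k+1}{s+1}-\binom{s+k}{s+1}$; you simply name the resulting evaluation as the hockey-stick identity rather than invoking Gosper's algorithm.
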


\begin{proof}
  The summand is \Go-summable with \anti shown in Lemma~\ref{L:anti1}.
\end{proof}

The column sum of the third part of $\coe_{i,j}$ involves a double sum.
We have for the third part of $\coe_{i,j}$ in the first \qu as
\begin{prop} \label{L:RHS_P2_h3}
  \begin{align} \label{E:RHS_P2_h3} 
    \begin{aligned}
    \y^{(3)}(i)
    &=
    (-1)^{i}  (2s-i)
    \binom{2s} {i} 
    \sum_{t=0}^{i}
    \sum_{j'=i+1}^{s}
    \frac{(-1)^t}{2s - t}
    \binom{i}{ t}
                   \\
    &=
      (-1)^i
      \frac{2s-i}{s}
      \binom{2s} {i}
      \binom{2s -i - 1}{ s}
      \left[
      1
      -  \frac{1}{2}
      { \binom{2s-1}{ s} }^{-1}
    \right] \binom{s-t-1+j'} {j'}.      
    \end{aligned}
  \end{align}
\end{prop}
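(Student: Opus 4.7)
The plan is to swap the order of summation so that the inner sum runs over $j'$ for fixed $t$, and then to apply the hockey-stick identity $\sum_{j'=0}^{N}\binom{r+j'}{j'} = \binom{r+N+1}{N}$ (with $r=s-t-1$) to both $N=s$ and $N=i$. This collapses the inner sum to
\[
\sum_{j'=i+1}^{s}\binom{s-t-1+j'}{j'} = \binom{2s-t}{s} - \binom{s-t+i}{i},
\]
so the double sum splits as $T = T_1 - T_2$, with
\[
T_1 = \sum_{t=0}^{i}\frac{(-1)^t}{2s-t}\binom{i}{t}\binom{2s-t}{s},
\qquad
T_2 = \sum_{t=0}^{i}\frac{(-1)^t}{2s-t}\binom{i}{t}\binom{s-t+i}{i}.
\]

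For $T_1$ I would use the pull-in identity $\binom{2s-t}{s}/(2s-t) = \binom{2s-t-1}{s-1}/s$ to absorb the $1/(2s-t)$ weight, then apply the dual Chu--Vandermonde identity $\sum_{t}(-1)^t\binom{i}{t}\binom{m-t}{p} = \binom{m-i}{p-i}$ with $m = 2s-1$ and $p = s-1$, obtaining $T_1 = \frac{1}{s}\binom{2s-i-1}{s}$. This supplies the first term inside the bracket of the claim.

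For $T_2$ the extra $\binom{s-t+i}{i}$ factor blocks an immediate Chu--Vandermonde collapse, so I would invoke \Ze's algorithm on $F(i,t) = \frac{(-1)^t}{2s-t}\binom{i}{t}\binom{s-t+i}{i}$ with $i$ as the recurrence parameter. A short certificate calculation shows that the combination $(2s-1-i)F(i+1,t) - (s-1-i)F(i,t)$ telescopes in $t$; after absorbing the boundary contribution from $t = i+1$ (which is outside the nominal range $0 \le t \le i$ but contributes to $T_2(i+1)$) one obtains the clean homogeneous recurrence $(2s-1-i)T_2(i+1) = (s-1-i)T_2(i)$. Combined with the base case $T_2(0) = 1/(2s)$, iteration yields $T_2 = \frac{1}{2s}\binom{2s-i-1}{s}\binom{2s-1}{s}^{-1}$. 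Substituting $T = T_1 - T_2$ into the prefactor $(-1)^i(2s-i)\binom{2s}{i}$ then recovers the bracketed closed form.

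The main obstacle is the \Ze step for $T_2$: because the upper summation limit $t=i$ depends on the recurrence parameter, one must add back explicitly the boundary term at $t=i+1$, otherwise the telescoping right-hand side leaves a spurious inhomogeneous piece and the recurrence fails to close. Once this boundary bookkeeping is handled, the rest of the argument is a routine combination of the two closed forms with the hockey-stick rewriting of the inner sum.
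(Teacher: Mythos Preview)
Your proposal is correct and follows the same overall route as the paper: swap the order of summation, collapse the inner $j'$-sum (the paper phrases the hockey-stick step as Gosper summability via Lemma~\ref{L:anti1}, obtaining $\binom{2s-t}{s-t}-\binom{s-t+i}{s-t}$, which is your expression up to binomial symmetry), split into $T_1-T_2$, and evaluate $T_2$ by the same first-order Zeilberger recurrence in $i$ with initial value $T_2(0)=1/(2s)$. The one substantive difference is your handling of $T_1$: you absorb the weight via $\tfrac{1}{2s-t}\binom{2s-t}{s}=\tfrac{1}{s}\binom{2s-t-1}{s-1}$ and finish with the alternating Vandermonde identity, which is cleaner and more elementary than the paper's second invocation of Zeilberger on that sum. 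Regarding the boundary bookkeeping for $T_2$: your caution is appropriate, but the resolution is simpler than you indicate, since $\binom{i}{t}=0$ for $t>i$ gives the summand natural compact support and one may sum over $0\le t\le s$ throughout, rendering the boundary standard; the paper relies on this implicitly.
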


\begin{proof}
The inner sum of $j'$ in Eq.~\eqref{E:RHS_P2_h3}
is \Go-summable as shown in Lemma~\ref{L:anti1}, hence Eq.~\eqref{E:RHS_P2_h3}
can be written as
\[
  \y^{(3)}(i)
  =
  (-1)^i (2s-i)
  \binom{2s} {i}
  \sum_{t=0}^{i} (-1)^t
  \frac{1}{ 2s - t}
  \binom{i} {t}
  \left[
    \binom{2s-t} {s-t}
    - \binom{s-t+i} {s-t} 
  \right] .
\]

Consider the above equation as a difference between two single sum,
and calculate each sum separately.
For the first sum,
the summand
\[
  \F(i, t) =
  (-1)^t
  \frac{1} { 2s - t}
  \binom{i}{ t}
  \binom{2s-t}{ s-t}
\]
satisfies a first order homogeneous recurrence
\[
  (s-i-1) \F(i, t) +(i-2s+1) \F(i+1, t) = 0,
\]
with the certificate as
\[
  \R = \frac{t (2s-t)} {i-t+1} .
\]
With initial value at $i=0$ as
\[
  \frac{1}{2s}
  \binom{2s}{ s},
\]
the sum can be evaluated as
\begin{equation} \label{E:RHS_h3I}
  (-1)^i (2s-i)
  \binom{2s} {i}
  \sum_{t=0}^{i} (-1)^t
  \frac{1}{ 2s - t}
  \binom{i} {t}
  \binom{2s-t} {s-t}
  =
  \frac{1}{s}
  \binom{2s -i - 1}{ s}.
\end{equation}

For the second sum,
the summand
\[
  \F(i, t) =
  (-1)^t
  \frac{1}{ 2s - t}
  \binom{i} {t}
  \binom{s-t+i} {s-t}
\]
satisfies a first order homogeneous recurrence
\[
  -(i+1)(i-s+1) \F(i, t)  + (i+1)(i-2s+1) \F(i+1, t) = 0,
\]
with the certificate as
\[
  \R = \frac{ t(2s-t)(s-t+i+1)}{i-t+1 }.
\]
With initial value at $i=0$ as
\[
  \frac{1}{2s},
\]
the sum can be evaluated as
\begin{equation} \label{E:RHS_h3II}
  (-1)^i (2s-i)
  \binom{2s} {i}
  \sum_{t=0}^{i} (-1)^t
  \frac{1}{ 2s - t}
  \binom{i} {t}
  \binom{s-t+i} {s-t} 
  =
  \frac{1}{2s}
  \binom{2s-i-1}{ s}
  \binom{2s-1}{ s}^{-1}.
\end{equation}

Eq.~\eqref{E:RHS_P2_h3} is the difference between
Eq.~\eqref{E:RHS_h3I} and
Eq.~\eqref{E:RHS_h3II} .

\end{proof}

Putting together the results for $\y^{(2)}(i)$ (Eq.~\eqref{E:RHS_P2_h2})
and $\y^{(3)}(i)$ (Eq.~\ref{E:RHS_P2_h3}),
we obtain the column sum of the vertical recurrence identities from \PTwo
in the first \qu and the second \qu as
\begin{prop}[Sum of vertical identities in \PTwo at column $i$]  \label{P:RHS_P2_h}
  \begin{align} \label{E:RHS_P2_h}
    \begin{aligned}
    \y(i) = \sum_{j'=i+1}^{s} 
    \coe(i,j')
    =&
    (-1)^i
    \binom{s-1}{ i}
    \left[
      \frac{1}{s} \binom{2s} {s-1}  - 1
       \right], \\
    \y'(i) = \sum_{j'=0}^{i-s-1}
    \coe(2s-i, s-j')
    =&
     (-1)^i
    \binom{s-1}{ 2s - i}
    \left[
      \frac{1}{s} \binom{2s} {s-1}  - 1
       \right],       
    \end{aligned}
    \end{align}
    where the second equation is obtained from the first one by changing $i \to 2s -i$.
\end{prop}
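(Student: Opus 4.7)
The plan is to assemble the result from the two formulas already in hand, namely $\y^{(2)}(i)$ from Lemma~\ref{L:RHS_P2_h2} and $\y^{(3)}(i)$ from Proposition~\ref{L:RHS_P2_h3}. First I would observe that the first term $\coe_{i,j}^{(1)}=(-1)^{j+1}\binom{s}{j}[j\le i]$ of the decomposition in Corollary~\ref{C:hij} contributes nothing to $\y(i)$, because the Iverson bracket $[j'\le i]$ vanishes identically on the range $j'>i$ of summation. Therefore $\y(i)=\y^{(2)}(i)+\y^{(3)}(i)$, and the remainder is a pure binomial simplification.

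The key rewriting step is the dual expression
\[
\binom{2s}{i}\binom{2s-i}{s+1}=\binom{2s}{s+1}\binom{s-1}{i},
\]
obtained by writing both sides as $(2s)!/[i!\,(s+1)!\,(s-i-1)!]$. This directly extracts the factor $\binom{s-1}{i}$ from $\y^{(2)}(i)$. For $\y^{(3)}(i)$ I would first apply the elementary relation $(2s-i)\binom{2s-i-1}{s}=(s+1)\binom{2s-i}{s+1}$ and then reuse the same dual expression. After these moves, both $\y^{(2)}(i)$ and $\y^{(3)}(i)$ share a common factor of $(-1)^{i}\binom{s-1}{i}\binom{2s}{s+1}$, and the bracketed $s$-only expression in $\y^{(3)}(i)$ simplifies with the help of $\binom{2s-1}{s}=\tfrac12\binom{2s}{s}$, converting $1-\tfrac12\binom{2s-1}{s}^{-1}$ into $1-\binom{2s}{s}^{-1}$.

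The collapse to the claimed form then rests on the single numerical identity $(s+1)\binom{2s}{s+1}=s\binom{2s}{s}$, which is immediate since both sides equal $(2s)!/[(s-1)!\,s!]$; this makes the $\binom{2s}{s}^{-1}$ term disappear cleanly. Using $\binom{2s}{s+1}=\binom{2s}{s-1}$ to rename the surviving coefficient then yields the first equation of the Proposition. For the second equation, I would invoke the construction of Section~\ref{SSS:P2}: the second-quadrant weights are defined by the involution $i\mapsto 2s-i$, $j\mapsto s-j$, and a direct change of summation index in $\y'(i)$ shows it equals $\y(2s-i)$. Since the bracketed factor depends only on $s$, only the binomial prefactor is affected, turning $\binom{s-1}{i}$ into $\binom{s-1}{2s-i}$.

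I do not anticipate a serious obstacle here; every step is elementary binomial algebra. The only place where one can easily slip is keeping track of signs and of the factor $(s+1)/s$ that appears when converting $(2s-i)\binom{2s-i-1}{s}$ into $(s+1)\binom{2s-i}{s+1}$, and in verifying that this factor is cancelled precisely by the identity $(s+1)\binom{2s}{s+1}=s\binom{2s}{s}$.
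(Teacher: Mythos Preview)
Your proposal is correct and follows the same route as the paper: the paper simply states that combining $\y^{(2)}(i)$ from Lemma~\ref{L:RHS_P2_h2} with $\y^{(3)}(i)$ from Proposition~\ref{L:RHS_P2_h3} gives the result, leaving the binomial algebra implicit, while you spell out exactly that algebra via the trinomial rewrite $\binom{2s}{i}\binom{2s-i}{s+1}=\binom{2s}{s+1}\binom{s-1}{i}$ and the identity $(s+1)\binom{2s}{s+1}=s\binom{2s}{s}$. Your derivation of the second line from the substitution $i\mapsto 2s-i$ is also what the paper asserts.
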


\begin{prop}[Total contributions to the right hand side from
  vertical  identities in \PTwo] \label{P:RHS_P2_V_gen}
  For $c(n) = \ca n + \cb$,
\begin{equation} \label{E:RHS_P2_V_gen}
  \sum_{i'=0}^{s-1}  \Y(i')
  + \sum_{i'=s+1}^{2s}  \Y(i')
  =
    \left[
      \frac{1}{s} \binom{2s} {s-1}  - 1
       \right] \ca^s (s+1)! .
\end{equation}
\end{prop}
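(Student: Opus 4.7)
The plan is to reduce Proposition~\ref{P:RHS_P2_V_gen} directly to the already established Proposition~\ref{P:RHS_P1_V_gen}. Comparing the column sums of the $\PTwo$-weights from Proposition~\ref{P:RHS_P2_h} with the column sums of the $\POne$-weights from Lemma~\ref{L:RHS_P1_sum_coe}, one observes that they agree up to the common scalar
\[
B := \frac{1}{s}\binom{2s}{s-1} - 1.
\]
Explicitly, $\y(i) = B\,\x(i)$ for $0 \le i \le s-1$, while $\y'(i) = (-1)^{s}\,B\,\x'(i)$ for $s < i \le 2s$. The extra $(-1)^s$ in the second relation is exactly compensated by the prefactor $(-1)^s$ appearing in the definition $\Y'(i) = (-1)^s c(n-i)^s \y'(i)$ (inherited from the weight convention on the second quadrant in section~\ref{SSS:P2}).

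First I would verify the pointwise identities $\Y(i) = B\,\X(i)$ and $\Y'(i) = B\,\X'(i)$. The first is immediate from multiplying $\y(i) = B\,\x(i)$ by $c(n-i)^s$. For the second, the two $(-1)^s$ factors cancel:
\[
\Y'(i) = (-1)^s c(n-i)^s \y'(i) = (-1)^s c(n-i)^s \cdot (-1)^s B\,\x'(i) = B\,c(n-i)^s \x'(i) = B\,\X'(i).
\]

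Summing over both ranges and factoring out $B$, I would conclude
\[
\sum_{i'=0}^{s-1} \Y(i') + \sum_{i'=s+1}^{2s} \Y'(i')
= B \left[ \sum_{i'=0}^{s-1} \X(i') + \sum_{i'=s+1}^{2s} \X'(i') \right] = B \cdot \ca^s (s+1)!,
\]
where the last equality invokes Proposition~\ref{P:RHS_P1_V_gen}. This is precisely Eq.~\eqref{E:RHS_P2_V_gen}.

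I do not anticipate a substantive obstacle here, since the combinatorial heavy lifting has already been done in proving Proposition~\ref{P:RHS_P2_h} (which isolates the scalar $B$ from the second- and third-term contributions of $\coe_{i,j}$) and Proposition~\ref{P:RHS_P1_V_gen} (which handles the polynomial expansion of $c(n-i)^s$ via the Stirling-number identities of Lemma~\ref{L:RHS_Stirling}). The only care needed is bookkeeping of the $(-1)^s$ signs across the gap at $i=s$ to confirm that $\Y$ and $\Y'$ bear the same proportionality constant $B$ to $\X$ and $\X'$ on both sides.
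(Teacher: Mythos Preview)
Your proposal is correct and follows essentially the same approach as the paper: both observe from Proposition~\ref{P:RHS_P2_h} and Lemma~\ref{L:RHS_P1_sum_coe} that the column sums for $\PTwom$ differ from those of $\POnem$ by the constant factor $B=\frac{1}{s}\binom{2s}{s-1}-1$, and then invoke Proposition~\ref{P:RHS_P1_V_gen} to get the result. Your treatment is in fact more careful than the paper's, which glosses over the $(-1)^s$ bookkeeping that you track explicitly to confirm $\Y'(i)=B\,\X'(i)$.
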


\begin{proof}
  By comparing Eq.~\eqref{E:RHS_P2_h} for the set \PTwo with
  Eq.~\eqref{E:RHS_P1_sum_of_col} for the set \POne,
  we see that
  the sum of coefficients $\coe_{i,j}$ for the set \PTwo
  for each column has the same factor
  \[
    \binom{s-1}{ i}
  \]
  as that for the set \POne.
  This factor is dependent on $i$.
  An extra factor
  \[
    \frac{1}{s} \binom{2s} {s-1}  - 1
  \]
  is independent of $i$.
  Hence the total contribution from the set \PTwo
  is that of the set \POne (Eq.~\eqref{E:RHS_P1_V_gen}) multiplied by this factor.
\end{proof}

The total contribution for the vertical identities from
all the recurrence identities from the sets
\POne and \PTwo
is the sum of Eq.~\eqref{E:RHS_P1_V_gen} and
Eq.~\eqref{E:RHS_P2_V_gen}.
By symmetry the total contribution from the vertical and horizontal identities
is twice this number.
Taking into account that LHS is also the sum of vertical and horizontal identities
with a factor of $2$ (the first identity of Eq.~\eqref{E:goal_lhs}),
this factor of $2$ cancels.
Hence
the total contribution for RHS is 
\begin{prop}[Overall RHS] \label{P:RHS_P1P2_V_gen}
\begin{equation} \label{E:RHS_P1P2_V_gen}
  RHS = \ca^s \binom{2s}{s}  s!.
\end{equation}
\end{prop}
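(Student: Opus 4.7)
My plan is to assemble the proposition from the two vertical-contribution totals already established in Proposition~\ref{P:RHS_P1_V_gen} and Proposition~\ref{P:RHS_P2_V_gen}, to invoke the vertical/horizontal symmetry of the construction, and to reconcile the resulting expression with the factor of $2$ sitting on the left hand side via Eq.~\eqref{E:goal_lhs}. The residual work is a one-line binomial identity.

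First I would add the \POne{} and \PTwo{} contributions to the RHS produced by vertical recurrences: Eq.~\eqref{E:RHS_P1_V_gen} and Eq.~\eqref{E:RHS_P2_V_gen} combine to
\[
  \ca^{s}(s+1)!\left[1+\tfrac{1}{s}\binom{2s}{s-1}-1\right]
  =\frac{(s+1)!}{s}\binom{2s}{s-1}\,\ca^{s},
\]
which is the total RHS generated by all vertical recurrence identities in $\POnem\cup\PTwom$. Because the identities in \POne{} and \PTwo{} were laid out symmetrically about the main diagonal, and the prescribed constant on the rhs of the strip recurrence has the same linear form $\ca n+\cb$ in both directions, the horizontal contribution is obtained from the vertical one by swapping the roles of $n$ and $m$; term by term the horizontal contribution equals the vertical one, doubling the running total.

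Next I would invoke Eq.~\eqref{E:goal_lhs}, which shows that the coefficient of $a_{n-i,m-i}$ produced by the summed construction along the main diagonal is exactly twice the coefficient appearing in Eq.~\eqref{E:2s}. The same factor of $2$ therefore divides out on both sides, so the RHS of the theorem equals the undoubled vertical total $\frac{(s+1)!}{s}\binom{2s}{s-1}\,\ca^{s}$. The only remaining step is the elementary simplification
\[
  \frac{(s+1)!}{s}\binom{2s}{s-1}=\frac{(2s)!}{s!}=\binom{2s}{s}s!,
\]
which delivers $\ca^{s}\binom{2s}{s}s!$ as claimed.

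I do not anticipate a genuine obstacle: the heavy lifting has already been done by the per-column sum evaluations (Lemma~\ref{L:RHS_P1_sum_coe}, Lemma~\ref{L:RHS_P2_h2}, Proposition~\ref{L:RHS_P2_h3}, Proposition~\ref{P:RHS_P2_h}) and by the Stirling-number reduction of Lemma~\ref{L:RHS_Stirling}. The one thing that requires care is the bookkeeping of the two independent factors of $2$ in play—the doubling coming from horizontal/vertical symmetry versus the doubling of the diagonal coefficient recorded in Eq.~\eqref{E:goal_lhs}—and making sure exactly one of them is absorbed when one passes from the summed construction to the statement of the theorem.
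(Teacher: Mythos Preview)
Your proposal is correct and follows essentially the same route as the paper: add Eq.~\eqref{E:RHS_P1_V_gen} and Eq.~\eqref{E:RHS_P2_V_gen}, invoke the horizontal/vertical symmetry for a factor of $2$, cancel it against the factor of $2$ on the LHS recorded in Eq.~\eqref{E:goal_lhs}, and simplify $\frac{(s+1)!}{s}\binom{2s}{s-1}=\binom{2s}{s}s!$. One small wording issue: the horizontal and vertical contributions are not literally equal ``term by term'' (one involves $c(n-i)$, the other $c(m-j)$); the reason their \emph{totals} agree is that Propositions~\ref{P:RHS_P1_V_gen} and~\ref{P:RHS_P2_V_gen} show the sum depends only on $\ca$ and $s$, not on $n$, $m$, or $\cb$.
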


For two-dimensional monomer-polymer model,
$\ca = 2$,
and the main Theorem~\ref{T:main} is proved.

\begin{comment}

\section{Generalization}

The main theorem can be generalized to the following result, the proof of which
will be discussed in another paper.
\begin{claim}
  For positive integers $p$ and $q$ which are not equal to $1$ at the same time,
  \[
    \sum_{i=0}^{(p+q)s} (-1)^i \binom{(p+q)s}{i} a_{n-pi, m-qi} = 0.
  \]
\end{claim}

\begin{remark}
The Q1 lower triangle is just  $\coe(i,j')$,
but what are those in upper triangle of Q2?
$(-1)^s c(s-j, s-i)$
are put in $(i, j+s)$ for H and $(j+s, i)$ for V.

Example:
For $s=5$,
take $i=1, j=3$.
Then take $c(5-3, 5-1) = c(2, 4) = 125$.
Then
\[
  (1, 3+5) = (3+5, 1) = (-1)^5 \times (5-3, 5-1).
\]
\end{remark}

Proof of Prop~\ref{P:RHS_P2_V_gen}
\begin{proof}
  By making a change of summation variable in the second sum $i' \rightarrow 2s -i'$,
  the two sum can be combined into a single sum
  
  \[
    \sum_{i'=0}^{s-1}  \Y(i')
    + \sum_{i'=s+1}^{2s}  \Y(i')
    =
    \sum_{i'=0}^{s-1}
    \coe(i',j') 
    \left(
      c(n-i)^s
      + \left( -c(n-2s+i) \right)^s
    \right) .
  \]

\end{proof}

\end{comment}

\appendix

\section{Some properties of the two-dimensional sequence} \label{A:seq}

In this Appendix we discuss some properties of the two-dimensional sequence.

First, we evaluate the double generating function of the sequence.
Define
\[
  \cdgf(s) = \sum_{i=0} \cgf_i(x) z^i = \sum_{i=0} \sum_{j=i+1} \coe_{i,j} x^j z^i. 
\]

\begin{prop}[Double generating function] \label{P:double_GF}
  The double generating function of the sequence $\coe_{i,j}$ is
\begin{equation} \label{E:double_GF}
  \cdgf(s) =
  -\frac{(1-xz)^s} {1-z}
  -\frac{x(1-xz)^{2s} } {(1-x)^{s+1}}
  +\frac{(1-xz)^{2s} }{ (1-x)^s (1-z)}  .
\end{equation}
\end{prop}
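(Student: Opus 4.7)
The plan is to substitute the three-term explicit formula for $\cgf_i(x)$ from Proposition~\ref{P:S_gf} (Eq.~\eqref{E:H}) into the definition $\cdgf(s) = \sum_{i \ge 0} \cgf_i(x) z^i$ and show that each of the three terms of Eq.~\eqref{E:H}, after multiplication by $z^i$ and summation over $i$, reproduces exactly one of the three summands on the right-hand side of Eq.~\eqref{E:double_GF}, in the same order. This will work cleanly because the three terms of Eq.~\eqref{E:H} already correspond to the three denominator structures $1/(1-z)$, $1/(1-x)^{s+1}$, and $1/[(1-x)^s (1-z)]$ that appear in the target formula.

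The first term is handled by swapping the order of summation: $\sum_{i \ge 0} \sum_{j=0}^{i} (-1)^{j+1}\binom{s}{j} x^j z^i$ becomes $\sum_{j \ge 0} (-1)^{j+1}\binom{s}{j} x^j \cdot z^j/(1-z)$ after summing the geometric series over $i \ge j$, and the remaining sum collapses by the binomial theorem to $-(1-xz)^s/(1-z)$. The second term is even more immediate: factoring out $x/(1-x)^{s+1}$ leaves $\sum_{i \ge 0} (-1)^{i+1}\binom{2s}{i}(xz)^i = -(1-xz)^{2s}$, giving the second summand directly.

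The third term is the main obstacle, since multiplying its internal $t$-sum by $z^i$ and summing over $i$ produces a genuine double sum. I would interchange the $t$- and $i$-summations and apply the elementary identity
\[
  \frac{2s-i}{2s-t}\binom{2s}{i}\binom{i}{t} = \binom{2s}{t}\binom{2s-t-1}{i-t},
\]
which decouples the $i$-dependence from the $t$-weight. Under the substitution $k = i-t$, the inner $i$-sum then reduces by the binomial theorem to $(-1)^t z^t (1-z)^{2s-t-1}$. What remains is the outer sum $\frac{(1-z)^{2s-1}}{(1-x)^s}\sum_{t \ge 0} \binom{2s}{t}\bigl(z(1-x)/(1-z)\bigr)^t$, which collapses once more by the binomial theorem, using the algebraic identity $(1-z) + z(1-x) = 1-xz$, to $(1-xz)^{2s}/[(1-x)^s(1-z)]$, matching the third summand. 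The only nontrivial step is spotting the binomial rearrangement above that separates the $i$-dependence; everything else is routine application of the binomial theorem and geometric-series summation.
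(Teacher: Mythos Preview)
Your proof is correct. For the first two terms of $\cgf_i$ your approach coincides with the paper's: both swap the order of summation, apply a geometric series, and close with the binomial theorem.

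For the third term, however, you take a genuinely different and more elementary route. The paper keeps the summand $(-1)^i(2s-i)\binom{2s}{i}\binom{i}{t}z^i$ intact and applies \Ze's algorithm twice---once to obtain a first-order recurrence in $t$ for the inner $i$-sum (solving it from $S(0)=-2s(1-z)^{2s-1}$), and then again to obtain a first-order recurrence in $s$ for the remaining $t$-sum (solving from $S(0)=1/(1-z)$). You bypass both invocations of creative telescoping by the single factorisation
\[
  \frac{2s-i}{2s-t}\binom{2s}{i}\binom{i}{t}=\binom{2s}{t}\binom{2s-t-1}{i-t},
\]
which decouples $i$ from the $t$-weight and reduces both the inner and outer sums to direct instances of the binomial theorem (the last step using $(1-z)+z(1-x)=1-xz$). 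Your argument is therefore self-contained---no certificates or recurrence-solving needed---while the paper's version has the advantage of being machine-discoverable without having to spot that particular binomial rearrangement. Both arrive at the same intermediate expression $\sum_{t}\binom{2s}{t}z^t(1-z)^{2s-t-1}/(1-x)^{s-t}$ before the final collapse, so the two proofs are not merely different presentations of the same computation but genuinely distinct derivations of that intermediate form.
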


\begin{proof}
  The double generating function is obtained from the three parts of $\cgf_i(x)$ (Eq.~\ref{E:H}).

  The first part can be obtained by exchanging the order of sums
  and use geometric sum and binomial theorem to obtain
  \[
    \cdgf^{(1)}(s) = -\frac{(1-xz)^s} {1-z}.
  \]

  The second part can also be obtained by binomial theorem:
  \[
    \cdgf^{(2)}(s) = -\frac{x(1-xz)^{2s} } {(1-x)^{s+1}} .
  \]

  The third part involves a double sum.
  By exchanging the order of sums,
  the sum over $i$ has the following summand
  \[
    \F = (-1)^i
    (2s-i)
    \binom{2s}{ i}
    \binom{i}{ t}
    z^i .
  \]
  \Ze's method shows that the sum $\Sum(t)$ satisfies the following first order homogeneous recurrence
  \[
    -(2s-t-1)z \Sum(t) + (z-1)(t+1) \Sum(t+1) = 0,
  \]
  with the certificate
  \[
    \R = i-t .
  \]
  With initial value at $t=0$ as
  \[
    \Sum(0) = -2s(1-z)^{2s-1},
  \]
  the sum is evaluated as
  \[
    \Sum(t) = - (2s) z^t  (z-1)^{2s-t-1}
      \binom{2s-1} {t} .
  \]
  Then the sum of over $t$ can be evaluated by \Ze's method again with $s$ as the parameter.
  For the summand
  \[
    \F =
    (2s) z^t (z-1)^{2s-t-1}
    \binom{2s-1}{ t}
    \frac{1} { 2s-t}
    \frac{1} { (1-x)^{s-t}},
  \]
  the sum obeys the following first order homogeneous recurrence
  \[
    (xz-1)^2 \Sum(s)  + (x-1) \Sum(s+1) = 0,
  \]
  with the certificate
  \[
    \R =
    \frac{ t(z-1)(2xz+2zs-3+2zsx + z-4s-zxt+t)}
    {(2s+2-t)(2s-t+1)} .
  \]
  With initial value at $s=0$ as
  \[
    \Sum(0) = \frac{1}{1-z},
  \]
  the sum is evaluated as
  \[
    \Sum(s) =  \cdgf^{(3)}(s) = \frac{(1-xz)^{2s} }{ (1-x)^s (1-z)}  .
  \]
  
\end{proof}

This two-dimensional sequence has many interesting properties.
We point out a few such properties, which will not be used explicitly in the proof of
the main theorem.

First we point out that the sequence takes only integer values.
\begin{coro} [All items are integer]
  All items in the sequence are integers. 
\end{coro}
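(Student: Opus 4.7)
My plan is to derive integrality of every entry $\coe_{i,j}$ as a direct consequence of Proposition~\ref{P:double_GF}, which expresses the double generating function $\cdgf(s)$ in three closed-form summands. The key observation is that each of those summands, expanded as a formal power series in $x$ and $z$, already has coefficients in $\mathbb{Z}$. Since by definition the coefficient of $x^j z^i$ in $\cdgf(s)$ equals $\coe_{i,j}$, integrality of the sequence follows immediately.

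First I would expand each factor appearing in Eq.~\eqref{E:double_GF} using elementary generating-function identities. The polynomial factors $(1-xz)^s$, $(1-xz)^{2s}$, and $x$ have integer coefficients by the binomial theorem. The three denominators yield standard integer binomial series
\[
\frac{1}{1-z}=\sum_{m\ge 0}z^m,\qquad
\frac{1}{(1-x)^{s+1}}=\sum_{l\ge 0}\binom{s+l}{l}x^l,\qquad
\frac{1}{(1-x)^{s}(1-z)}=\sum_{l,m\ge 0}\binom{s+l-1}{l}x^l z^m.
\]
Since the Cauchy product of formal power series with integer coefficients again has integer coefficients, each of the three summands of $\cdgf(s)$ is an element of $\mathbb{Z}[[x,z]]$, and hence so is their sum $\cdgf(s)$. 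Reading off the coefficient of $x^j z^i$ gives $\coe_{i,j}\in\mathbb{Z}$.

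The only place where integrality is not visually apparent in the explicit formula of Corollary~\ref{C:hij} is the factor $1/(2s-t)$ in the third term, and I would note that the generating-function route sidesteps it entirely. As a sanity check (and as an alternative self-contained argument from Corollary~\ref{C:hij}), the third term can be rewritten using the identity
\[
\frac{(2s-i)\,\binom{2s}{i}\binom{i}{t}}{2s-t}
=\binom{2s}{t}\binom{2s-t-1}{i-t},
\]
which follows from $\binom{2s}{i}\binom{i}{t}=\binom{2s}{t}\binom{2s-t}{i-t}$ and $\tfrac{2s-i}{2s-t}\binom{2s-t}{i-t}=\binom{2s-t-1}{i-t}$. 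The third summand of $\coe_{i,j}$ then becomes $(-1)^{i}\sum_{t=0}^{i}(-1)^{t}\binom{2s}{t}\binom{2s-t-1}{i-t}\binom{s+j-t-1}{j}$, a $\mathbb{Z}$-linear combination of binomial coefficients.

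I do not anticipate a real obstacle: the only conceptual point is recognizing that the manifest $1/(2s-t)$ pole in Corollary~\ref{C:hij} is illusory, which is settled either globally by the closed-form generating function of Proposition~\ref{P:double_GF} or locally by the binomial identity above. A brief remark might be added that the ranges of summation in each expansion never encounter $t=2s$ (since we always have $t\le i<s<2s$ for entries of the sequence), so no indeterminate forms arise.
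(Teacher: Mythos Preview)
Your main argument is exactly the paper's: the one-line proof there simply says the claim ``is obvious from the double generating function Eq.~\eqref{E:double_GF},'' and you have spelled out precisely why (each factor expands with integer coefficients, and Cauchy products preserve $\mathbb{Z}[[x,z]]$). The additional direct verification via Corollary~\ref{C:hij}, using the identity $\tfrac{(2s-i)\binom{2s}{i}\binom{i}{t}}{2s-t}=\binom{2s}{t}\binom{2s-t-1}{i-t}$, is a correct and pleasant extra that the paper does not include.
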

\begin{proof}
  This is obvious from the double generating function
Eq.~\eqref{E:double_GF}.
\end{proof}

From the explicit expressions (Eq.~\eqref{E:double_GF} or Eq.~\eqref{E:H}  or  Eq.~\eqref{E:hv2}),
a few simple expressions for the first few items can be obtained:
\begin{coro}
  The general expression of the first few terms of $ \coe_{s, i, j}$:
  \begin{align*}
    \coe_{s, 0, 1} &= s-1, 
    & \coe_{s, 0, 2} &= \frac{1}{2} (s+1)(s-2),
    & \coe_{s, 0, 3} &= \frac{1}{6}  (s+1)(s+2) (s-3), \\
                   &               &  \coe_{s, 1, 2} &= -\frac{1}{2} s(3s-5), 
    &    \coe_{s, 1, 3} &= -\frac{1}{6}s(s+1)(5s-14), \\
                   & & & & \coe_{s, 2, 3} &= \frac{1}{6} s(7s^2-21s+8) .                
    \end{align*}
\end{coro}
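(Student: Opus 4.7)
The plan is to derive each of the six entries directly from the recursive definition Eq.~\eqref{E:S_def}, which already supplies a closed form for the base row $i=0$ and a two-term linear recursion for transitioning to subsequent rows. Since every entry requested lies in rows $i\in\{0,1,2\}$, at most two applications of the recursion are required, and no appeal to the generating function Eq.~\eqref{E:H} or to the explicit formula Eq.~\eqref{E:hv2} is necessary (though either could be substituted as an alternative route).

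For the base row I would substitute $j=1,2,3$ into $\coe_{s,0,j}=\binom{s+j-1}{j}(s-j)/s$, canceling the $s$ in the denominator against the factor $s$ coming out of $\binom{s+j-1}{j}$. This immediately produces $s-1$, $\tfrac12(s+1)(s-2)$, and $\tfrac16(s+1)(s+2)(s-3)$. For the $i=1$ row, the recursion specializes to
\[
\coe_{s,1,j} = -(s-j+1)\,\coe_{s,0,j-1} \;-\; (j-1)\,\coe_{s,0,j},
\]
so inserting the row-$0$ values at $j=2$ and $j=3$ and gathering terms over a common denominator yields $-\tfrac12 s(3s-5)$ and $-\tfrac16 s(s+1)(5s-14)$; the factor $s$ (and, in the second case, $s+1$) drops out naturally from the two summands. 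Finally, for the single entry in the $i=2$ row, applying the recursion once more gives
\[
\coe_{s,2,3} \;=\; -\tfrac{s-2}{2}\,\coe_{s,1,2} \;-\; \tfrac{1}{2}\,\coe_{s,1,3},
\]
and a short algebraic simplification — pull out the common factor $s$, place the two pieces over the common denominator $12$, expand, and collect — yields $\tfrac16 s(7s^2-21s+8)$.

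There is no substantive obstacle here; the whole derivation is a mechanical descent through a two-term linear recurrence with polynomial coefficients. The only care needed is bookkeeping: tracking the signs attached to each coefficient in Eq.~\eqref{E:S_def} and factoring out the common $s$ (and occasionally $s+1$) when consolidating the two contributions at each step.
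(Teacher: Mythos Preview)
Your proposal is correct and the computations all check out. The paper itself gives essentially no proof beyond the remark that the values follow ``from the explicit expressions (Eq.~\eqref{E:double_GF} or Eq.~\eqref{E:H} or Eq.~\eqref{E:hv2})''; you instead work directly from the recursive definition Eq.~\eqref{E:S_def}. Both routes are trivial computations, but yours is marginally more self-contained since it does not rely on the derived closed forms, only on the definition of the sequence.
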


The sum of the column is given by
\begin{coro}[Sum of column]
  The column sum of the sequence is given by
  \[
    \sum_{i=0}^{j-1} \coe_{s,i,j} = (-1)^{j+1}
    (s-1)
    \binom{s-1} {j-1} .
  \]
\end{coro}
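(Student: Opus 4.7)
The plan is to derive the column-sum identity directly from the double generating function $\cdgf(s)$ established in Proposition~\ref{P:double_GF}. Since $\coe_{s,i,j} = 0$ for $j \le i$, the sum $\sum_{i=0}^{j-1} \coe_{s,i,j}$ is precisely the inner sum over all $i$, and so it equals the coefficient of $x^j$ in the specialization $\cdgf(s)\big|_{z=1}$. The heart of the proof is thus to evaluate this specialization and then extract a single coefficient.

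First I would rewrite Eq.~\eqref{E:double_GF} by grouping the first and third terms, which both carry a $1/(1-z)$ singularity, as
\[
  \frac{1}{(1-z)(1-x)^s}\Bigl[(1-xz)^{2s} - (1-x)^s (1-xz)^s \Bigr] - \frac{x(1-xz)^{2s}}{(1-x)^{s+1}} .
\]
Setting $f(z) = (1-xz)^{2s} - (1-x)^s(1-xz)^s$, one has $f(1) = 0$, so the apparent pole is removable, and $\lim_{z\to 1} f(z)/(1-z) = -f'(1)$. A direct differentiation gives $f'(1) = -2sx(1-x)^{2s-1} + sx(1-x)^{2s-1} = -sx(1-x)^{2s-1}$, hence the bracketed contribution at $z=1$ reduces to $sx(1-x)^{s-1}$. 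Combining with the second term of $\cdgf(s)$ at $z=1$, namely $-x(1-x)^{s-1}$, collapses the whole generating function to
\[
  \cdgf(s)\big|_{z=1} = (s-1)\, x\, (1-x)^{s-1}.
\]

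Finally I would extract the coefficient of $x^j$ by the binomial theorem:
\[
  [x^j]\, (s-1)\, x\, (1-x)^{s-1} = (s-1)\, [x^{j-1}] (1-x)^{s-1} = (-1)^{j-1}(s-1)\binom{s-1}{j-1},
\]
which is exactly the claimed formula.

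The only delicate step is the removal of the $1/(1-z)$ singularity: one must confirm that the first and third summands in Eq.~\eqref{E:double_GF} really do combine into a polynomial in $z$ (so that $z=1$ may be substituted), and compute the resulting limit correctly. Everything else is algebraic bookkeeping and a single application of the binomial theorem, so I expect this to be the main — and essentially only — technical point.
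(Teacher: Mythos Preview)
Your proof is correct and follows exactly the same route as the paper, which simply says ``take limit of the double generating function at $z=1$'' to obtain $(s-1)(1-x)^{s-1}x$; you have merely supplied the details of that limit computation (the removable singularity at $z=1$) that the paper leaves implicit.
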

\begin{proof}
  Take limit of the double  generating function at $z=1$ to get 
  \[
    (s-1) (1-x)^{s-1} x .
  \]
\end{proof}

As a simple corollary of Eq.~\eqref{E:hv2},
the sequence always vanishes at the point $(s, 0, s)$.

\section{Gosper and Zeilberger algorithms}  \label{S:A_GZ}

In the proof we need to evaluate many sums, some are single sums, while others are
double sums. Two powerful methods to evaluate sums are \Go's algorithm and \Ze's algorithm.

\Go's algorithm is a procedure to find \emph{indefinite} sum of hypergeometric terms
\cite{gosperDecisionProcedureIndefinite1978,Graham1994,Petkovsek1996}.
Given a hypergeometric terms $t_n$,
the algorithm finds, if it exists, a hypergeometric term $z_n$, such that
\[
  z_{n+1} - z_{n} = t_n.
\]
The \anti{} $z_{n}$ for the indefinite sum  is analogous to the antiderivative for the indefinite
integration.
\Go's algorithm gives a definitive answer to the question
of whether a given hypergeometric term can be indefinitely summed.
If the answer is confirmative, we call the summand \Go-summable.

Many summands, however, are not \Go-summable.
\Ze's algorithm,
or the method of \emph{creative telescoping}
\cite{zeilbergerMethodCreativeTelescoping1991,Graham1994,Petkovsek1996},
makes many more summands become summable.
The method is an extension of \Go's algorithm.
It shows that  for the sum
  \begin{equation} \label{E:A_sum}
    f(n) = \sum_{k=u}^{v} \F(n, k),
  \end{equation}
it is always possible to find a recurrence
\begin{equation} \label{E:A_recur}
   \sum_{j=0}^J b_j(n) \F(n+j, k) = \G(n, k+1) - \G(n, k), 
\end{equation}
in which the coefficients $\{b_j(n)\}_0^J$ are polynomials in $n$, and 
$\R(n, k) = \G(n, k)/\F(n, k)$ is a rational function of $n$ and $k$.
In the proof we list $\R(n, k)$ for each sum,
which can be used to check the correctness
of the recurrence relation of the summand.

Since the coefficients $b_j(n)$ on the left side of the recurrence
are independent of the summation index $k$,
Eq.~\eqref{E:A_recur} can be summed from $u$ (the lower bound of the summation index $k$)
to $v$ (the upper bound) to
obtain the sum $f(n)$:
\[
  \sum_{j=0}^J b_j(n) f(n+j) = \G(n, u+1) - \G(n, v), 
\]
where the right side telescopes.
For the \emph{standard boundary conditions}~\cite{wilfAlgorithmicProofTheory1992},
where the sum includes all nonzero values of the summand
(and the support lies inside a larger set in which the summand remains well defined
and vanishes),
the right hand side of the above equation vanishes,
so the equation gives a homogeneous recurrence relation for the sum.
For nonstandard boundary conditions, however,
the right hand side may not vanish, leading to an inhomogeneous recurrence for the sum.
For the sums in our case,
they most often have nonstandard boundary conditions
due to the geometric restrictions imposed on the lower and upper boundaries,
hence they often have inhomogeneous recurrence relations.

The algorithm is extended to multi-variable
multi-sum summations
\cite{wilfAlgorithmicProofTheory1992,Apagodu2006}.
For a double sum with summation indices $i$ and $j$,
\begin{equation}
  \label{E:A_double_sum}
  f(n) =
  \sum_{i=u_i}^{v_i}
  \sum_{j=u_j}^{v_j}
  \F(n, i, j),
\end{equation}
the recurrence of Eq.~\eqref{E:A_recur}
is in the form of
\[
  \sum_{\ell=0}^J b_\ell(n) \F(n+\ell, i, j) =
  \Delta_i \G_i(n, i, j)
  +
  \Delta_j \G_j(n, i, j),
\]
where
$\Delta_i$ and $\Delta_j$
are the forward difference operators
on $i$ and $j$, respectively.

Just as the single sums mentioned above,
the double sums in our proof often
have nonstandard boundary conditions.
As the authors commented in~\cite{wilfAlgorithmicProofTheory1992},
under  nonstandard boundary conditions,
``the boundary terms `snowball' so the
application would become rapidly more complicated.''

Whether summation indices are dependent of each other
also affects the recurrence relation of the sum.
Below we list a few such scenarios for the summation indices
of double sums.
In each case
the inhomogeneous term is a sum of two sums with a single variable.
Here we use $u_\ell$ and $v_\ell$ for the lower and upper boundaries,
respectively, with $\ell=i$ or $j$ for the summation indices.
They are assumed to be independent of the summation parameter $n$.
The proofs can be obtained by direct calculations.

\begin{lemma} \label{L:inhomo1}
  For the double sum with independent lower and upper boundaries,
  \[
    \sum_{i=u_i}^{v_i}
    \sum_{j=u_j}^{v_j}
    \F(n, i, j),
  \]
  the sum satisfies an inhomogeneous recurrence relations
  with the inhomogeneous term as
  \begin{multline*}
   \sum_{i=u_i}^{v_i}
    \sum_{j=u_j}^{v_j}
    \big(
      \Delta_i G_i(n, i, j)
    +
    \Delta_j \G_j(n, i, j)
  \big)
  = \\
  \sum_{i=u_i}^{v_i}
  \big(
  \G_j(n, i, v_j+1)
  -
  \G_j(n, i, u_j)
  \big)
  +
  \sum_{j=u_j}^{v_j}
  \big(
  \G_i(n, v_i+1, j)
  -
  \G_i(n, u_i, j)
  \big)
  .
  \end{multline*}
\end{lemma}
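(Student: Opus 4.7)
The plan is to prove Lemma~\ref{L:inhomo1} by separating the right-hand-side of the creative-telescoping identity into its two forward-difference pieces and applying the discrete fundamental theorem of calculus (i.e., telescoping) to each piece in the appropriate direction. Since the lower and upper boundaries $u_i, v_i, u_j, v_j$ are assumed independent of each other (and of the parameter $n$), the iterated sum over the rectangle $[u_i, v_i] \times [u_j, v_j]$ is a genuine product-of-intervals sum, so Fubini for finite sums lets us rearrange freely.

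First I would split
\[
  \sum_{i=u_i}^{v_i}\sum_{j=u_j}^{v_j}\bigl(\Delta_i \G_i(n,i,j) + \Delta_j \G_j(n,i,j)\bigr)
  = A + B,
\]
where $A$ is the $\Delta_i$ piece and $B$ is the $\Delta_j$ piece. For $B$ I keep the order of summation as written: for each fixed $i$ in $[u_i, v_i]$, the inner sum $\sum_{j=u_j}^{v_j} \Delta_j \G_j(n,i,j) = \G_j(n,i,v_j+1) - \G_j(n,i,u_j)$ telescopes, and then summing the result over $i \in [u_i, v_i]$ produces the first single sum in the claimed inhomogeneous term.

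For $A$ I would swap the order of summation, which is legal precisely because the boundaries are independent: $A = \sum_{j=u_j}^{v_j}\sum_{i=u_i}^{v_i} \Delta_i \G_i(n,i,j)$. The inner sum over $i$ telescopes to $\G_i(n,v_i+1,j) - \G_i(n,u_i,j)$, and summing over $j \in [u_j, v_j]$ yields the second single sum. Adding $A + B$ gives exactly the advertised inhomogeneous term.

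The only real subtlety — and this is the step I would flag rather than call an obstacle — is the \emph{independence} hypothesis on the boundaries: without it the swap of summation order in $A$ would create extra boundary contributions and the two ``single'' sums would no longer decouple into a clean row-sum plus column-sum form. This explains why separate Lemmas (\ref{L:inhomo2} and~\ref{L:inhomo3}) are required for the dependent-boundary cases used in the proof of Proposition~\ref{P:Q3_double_sum_recur} and Lemma~\ref{L:Q4_S1_recur}. Beyond verifying that independence justifies the Fubini step, the argument is pure telescoping and requires no calculation; the whole proof is a couple of lines.
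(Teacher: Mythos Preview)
Your proposal is correct and matches the paper's approach: the paper simply states that ``the proofs can be obtained by direct calculations'' without giving details, and your telescoping-plus-Fubini argument is exactly the direct calculation intended. Your observation about why the independence hypothesis is needed (to justify swapping the order of summation in the $\Delta_i$ piece) is a useful clarification that the paper leaves implicit.
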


\begin{lemma} \label{L:inhomo2}
  For the double sum with inner sum's upper boundary as the summation index of the outer sum,
  \[
    \sum_{i=u_i}^{v_i}
    \sum_{j=u_j}^{i}
    \F(n, i, j),
  \]
  the sum satisfies an inhomogeneous recurrence relations
  with the inhomogeneous term as
  \begin{multline*}
   \sum_{i=u_i}^{v_i}
    \sum_{j=u_j}^{i}
    \big(
    \Delta_i G_i(n, i, j)
    +
    \Delta_j \G_j(n, i, j)
    \big)
    =\\
    \sum_{i=u_j}^{v_i}
    \big(
    \G_j(n, i, i+1)
    -
    \G_j(n, i, v_j)
    \big)
    +
    \sum_{j=u_j}^{v_i}
     \big(
     \G_i(n, v_i+1, j)
     -
     \G_i(n, j, j)
     \big)
     .
  \end{multline*}
\end{lemma}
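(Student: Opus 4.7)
\textbf{Proof proposal for Lemma \ref{L:inhomo2}.}

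The plan is to treat the two pieces of the summand separately, exploiting the fact that discrete partial differences telescope in their own variable. The only subtlety is the triangular shape of the summation region.

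For the $\Delta_j \G_j(n,i,j)$ piece, I would hold $i$ fixed and telescope the inner sum over $j$ from $u_j$ to $i$. The dependence of the upper limit on $i$ is harmless here because $i$ is held constant during this telescoping, so
\[
\sum_{j=u_j}^{i}\Delta_j \G_j(n,i,j) \;=\; \G_j(n,i,i+1) - \G_j(n,i,u_j).
\]
Summing this identity over $i \in [u_i, v_i]$ produces the first single sum on the right-hand side of the claim.

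For the $\Delta_i \G_i(n,i,j)$ piece, one cannot telescope with $i$ as the outer index, since the inner upper limit is precisely $i$. I would therefore interchange the order of summation. The region $\{(i,j) : u_i \le i \le v_i,\ u_j \le j \le i\}$ rewrites, under the natural hypothesis $u_i \le u_j$ that holds in the applications, as $\{(j,i) : u_j \le j \le v_i,\ j \le i \le v_i\}$. For each fixed $j$ the inner $i$-telescoping now yields $\G_i(n,v_i+1,j) - \G_i(n,j,j)$, and summing over $j \in [u_j, v_i]$ produces the second single sum.

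The main obstacle is purely bookkeeping: the triangular shape generates the \emph{diagonal} boundary contributions $\G_j(n,i,i+1)$ and $\G_i(n,j,j)$, which are absent in the rectangular case of Lemma \ref{L:inhomo1}. One must also verify in each application, in particular in the proof of Proposition \ref{P:Q3_inhom}, that the relative order of the boundaries ($u_i \le u_j$) is indeed the one assumed, so that the simplification of the swapped inner range to $[j,v_i]$ is valid and no further case splitting is needed. Once those order hypotheses are checked, the remaining manipulations are direct telescoping and require no identities beyond summation by parts.
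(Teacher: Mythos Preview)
Your approach is exactly what the paper has in mind: it states only that ``the proofs can be obtained by direct calculations,'' and the direct calculation is precisely the two telescopings you describe (inner $j$-telescoping for $\G_j$, swap-then-$i$-telescoping for $\G_i$).

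Two small bookkeeping remarks to align your write-up with the stated formula. First, your telescoping of the $\Delta_j\G_j$ piece yields $\G_j(n,i,i+1)-\G_j(n,i,u_j)$; the ``$v_j$'' appearing in the lemma's right-hand side is evidently a typographical slip for $u_j$, since no $v_j$ occurs in the double sum at all --- your computation is the correct one. Second, you write that the first single sum runs over $i\in[u_i,v_i]$, whereas the lemma writes $i\in[u_j,v_i]$; these agree once you note (as you effectively do for the second piece) that under the standing hypothesis $u_i\le u_j$ the inner sum is empty for $u_i\le i<u_j$, so those terms drop. With these two cosmetic adjustments your argument matches the paper's intended direct calculation line for line.
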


\begin{lemma}  \label{L:inhomo3}
  For the double sum
  \[
    \sum_{i=u_i}^{v_i}
    \sum_{j=u_j}^{v_i - i}
    \F(n, i, j),
  \]
  the sum satisfies an inhomogeneous recurrence relations
  with the inhomogeneous term as
  \begin{multline*}
   \sum_{i=u_i}^{v_i}
    \sum_{j=u_j}^{i}
    \big(
    \Delta_i G_i(n, i, j)
    +
    \Delta_j \G_j(n, i, j)
    \big)
    =\\
    \sum_{i=u_i}^{v_i}
    \big(
    \G_j(n, i, v_i+1 - i)
    -
    \G_j(n, i, u_j)
    \big)
    +
    \sum_{j=u_j}^{v_i}
     \big(
     \G_i(n, v_i+1-j, j)
     -
     \G_i(n, u_i, j)
     \big)
     .
  \end{multline*}
\end{lemma}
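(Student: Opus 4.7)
The plan is to verify the identity by Fubini's theorem applied to the triangular summation region, together with elementary one-variable telescoping. The summation domain is $T = \{(i,j) : u_i \le i \le v_i,\ u_j \le j \le v_i - i\}$, and since $\Delta_i \G_i$ and $\Delta_j \G_j$ telescope along orthogonal coordinate directions, the two contributions can be collapsed independently. No Gosper- or Zeilberger-style machinery is needed here; the content is purely combinatorial, namely how forward differences collapse over a triangular index set.

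First I would dispatch the $\Delta_j \G_j$ piece. For each fixed $i \in [u_i, v_i]$ the inner sum in $j$ runs from $u_j$ to $v_i - i$ and telescopes to $\G_j(n, i, v_i + 1 - i) - \G_j(n, i, u_j)$. Re-summing over $i$ from $u_i$ to $v_i$ reproduces the first single sum on the right-hand side of the statement verbatim.

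For the $\Delta_i \G_i$ piece, direct telescoping in $i$ is blocked because the $j$-range depends on $i$. The remedy is to swap the order of summation: the same triangle $T$ may be described as $\{u_j \le j \le v_i - u_i,\ u_i \le i \le v_i - j\}$. For each such $j$, telescoping in $i$ from $u_i$ to $v_i - j$ produces $\G_i(n, v_i + 1 - j, j) - \G_i(n, u_i, j)$, matching the summand of the second right-hand side sum. Extending the outer $j$-sum from $v_i - u_i$ up to the stated $v_i$ is harmless: once $j > v_i - u_i$ one has $v_i + 1 - j \le u_i$, so the two endpoints of the telescoping coincide or cross and the extra terms contribute nothing under the convention that empty or degenerate sums vanish.

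Adding the two contributions yields the displayed right-hand side and completes the proof. The main obstacle is not algebraic but bookkeeping: one must describe the triangle both ways consistently, and verify that enlarging the outer $j$-range of the second sum up to $v_i$ introduces no spurious terms beyond the natural cutoff $v_i - u_i$. That is where the care of the argument should go, since every other step is a routine one-dimensional telescoping.
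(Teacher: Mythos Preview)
Your proposal is correct and is exactly the ``direct calculation'' the paper alludes to (the paper gives no details beyond that phrase): telescope $\Delta_j G_j$ in $j$ for each fixed $i$, swap the order of summation over the triangle $T$ to telescope $\Delta_i G_i$ in $i$ for each fixed $j$, and add. Your remark about the upper limit $v_i$ in the second single sum versus the natural cutoff $v_i - u_i$ is a genuine observation; in the paper's sole application (Proposition~\ref{P:Q3_inhom}) one has $u_i = 0$, so the two ranges coincide and no convention is needed.
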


\begin{remark}
  The choice of summation parameter also affects the recurrence obtained
  by the \Ze's method.
  If one of the running indices of the double sum Eq.~\eqref{E:A_double_sum}
  is used as a parameter ($u_\ell$ or $v_\ell$),
  the inhomogeneous term needs to include additional single sums
  for the boundary terms.
\end{remark}

\begin{remark}
Note that in the proof we use $\F$ and $\Sum$ to denote the \emph{generic} summand and sum under discussion.
\end{remark}

\section{Some binomial coefficient identities} \label{S:A_binom}

One of the binomial coefficient identities that is used several times in the proof
is the well-known Chu-Vandermonde identity.
\begin{equation} \label{E:binom_iden_V}
  \sum_{j} \binom{a}{j} \binom{b}{c-j} = \binom{a+b}{c} . 
\end{equation}

\begin{comment}
\begin{equation} \label{E:binom_iden_prud_56}
  \sum_{k=0}^n (-1)\binom{n}{k} \binom{a-k}{m} = \binom{a-n}{m-n} . 
\end{equation}

\begin{equation}
  \binom{2s+1}{i} = \sum_{j=0}^s \binom{s}{j} \binom{s+1}{i-j} .
\end{equation}
  
\end{comment}

Another binomial coefficient identity used in the proof of the result in the first \qu
is listed below.

  \begin{lemma} \label{L:binom_iden}
    \begin{equation} \label{E:binom_iden}
    \sum_{j'=0}^{j} (-1)^{j'}  \binom{s-t+j'}{ j'}  \binom{s}{ j-j'} =
    (-1)^j  \binom{j-t}{j}.
    \end{equation}
  \end{lemma}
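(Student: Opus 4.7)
The plan is to reduce the identity to the Chu-Vandermonde identity (Eq.~\eqref{E:binom_iden_V}) by two applications of the upper-negation rule for binomial coefficients. Recall that
\[
  \binom{a}{k} = (-1)^k \binom{k-a-1}{k}.
\]

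First I would rewrite the first binomial coefficient of the summand. Using upper negation with $a = s-t+j'$ and $k = j'$,
\[
  \binom{s-t+j'}{j'} = (-1)^{j'} \binom{t-s-1}{j'}.
\]
This cancels the sign $(-1)^{j'}$ already present in the summand, so the left-hand side of Eq.~\eqref{E:binom_iden} becomes
\[
  \sum_{j'=0}^{j} \binom{t-s-1}{j'} \binom{s}{j-j'}.
\]

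Next I would apply Chu-Vandermonde (Eq.~\eqref{E:binom_iden_V}) with $a = t-s-1$, $b = s$, and $c = j$, which collapses the sum to $\binom{t-1}{j}$. Finally, a second application of upper negation with $a = t-1$ and $k = j$ gives
\[
  \binom{t-1}{j} = (-1)^j \binom{j-t}{j},
\]
matching the right-hand side of Eq.~\eqref{E:binom_iden}.

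The only subtlety worth flagging is the range of summation: the Chu-Vandermonde identity is typically stated as a sum over all integers $j'$, but here $j'$ runs from $0$ to $j$. This causes no issue because $\binom{s}{j-j'}$ vanishes for $j'>j$, and $\binom{t-s-1}{j'}$ vanishes for $j'<0$, so extending the range leaves the value unchanged. There is no real obstacle in this proof; it is a two-line application of standard binomial transformations.
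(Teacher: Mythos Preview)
Your proof is correct and, in fact, cleaner than the paper's. The paper argues differently: it first rewrites the product of binomials via factorials as
\[
  \binom{s-t+j'}{j'}\binom{s}{j-j'}
  = \frac{s!\,(j-t)!}{(s-t)!\,j!}\,\binom{j}{j'}\binom{s-t+j'}{\,j-t\,},
\]
and then invokes the separate identity
\(\sum_{k=0}^n (-1)^k\binom{n}{k}\binom{a+k}{m}=(-1)^n\binom{a}{m-n}\)
(cited from Prudnikov/Knuth) to evaluate the resulting sum. Your route---two upper negations sandwiching one application of Chu--Vandermonde (Eq.~\eqref{E:binom_iden_V})---avoids the extraneous factorial bookkeeping and uses only the identity already stated in the paper. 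Both approaches are short, but yours is more self-contained, and your remark on the summation range (that $\binom{s}{j-j'}$ cuts off $j'>j$ while the lower index cuts off $j'<0$) is exactly the justification needed for Chu--Vandermonde with a possibly negative upper parameter.
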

  \begin{proof}
    Rearrange the binomials in the summand,
    \begin{equation*} \label{E:binom_iden_L}
      \sum_{j'=0}^{j} (-1)^{j'}  \binom{s-t+j'}{ j'}  \binom{s}{ j-j'}
      = \frac{s! (j-t)! }{(s-t)! j!}
        \sum_{j'=0}^{j} (-1)^{j'}  \binom{j}{j'} \binom{s-t+k}{j-t} .
      \end{equation*}
      By using identity
      \begin{equation} \label{E:binom_iden0}
        \sum_{k=0}^n \binom{n}{k} \binom{a+k}{m} = (-1)^n \binom{a}{m-n},
      \end{equation}
      Eq.~\eqref{E:binom_iden} can be obtained.
      Eq.~\eqref{E:binom_iden0} can be found in many references,
      such as Prudnikov 4.2.5. Eq. (55)
      or Knuth 1.2.6. Eq. (23).
  \end{proof}

\begin{comment}
  
  From identity Eq.~\eqref{E:binom_iden} we have
  \begin{coro} \label{C:binom_iden}
    \begin{equation} \label{E:binom_iden_C}
      \sum_{j'=0}^{j} (-1)^{j'}  \binom{s-t+j'}{ j'}  \binom{s}{ j-j'} =
      \begin{cases}
        (-1)^j (j+1), & t=-1, \\
        (-1)^j , & t=0, \\
        1 ,      & t=j, \\
        0 ,      & 0 < t < j .
      \end{cases}
    \end{equation}
  \end{coro}
  
\begin{lemma} \label{L:i0}
  For $0 \le j \le s$,
  \[
    \sum_{j'=0}^j (-1)^{j'} \binom{s+j'-1}{ j' } \binom{s}{j-j'} \frac{s-j'}{s} =
    \begin{cases}
      1 & j = 0, \\
      (-1)^{j+1} & j > 0 .
    \end{cases}  
  \]
\end{lemma}
\begin{proof}
  Denote the sum as $S(j)$.
  By direct calculations it is easy to show that
  the $S(0)=1$ and $S(1)=1$.
  WZ method shows that
  \[
    S(j+1) = -S(j).
  \]
\end{proof}

\begin{lemma}
  For $0 \le j \le s$,
  \[
    (-1)^j \sum_{j'=1}^j (-1)^{j'} \binom{s+j'-1}{ j' } \binom{s}{j-j'} \frac{s-j'}{s} =
    \begin{cases}
      0 & j = 0, \\
      (-1)^{j+1} \binom{s}{j}  - 1 & j > 0 .
    \end{cases}  
  \]
\end{lemma}
\begin{proof}
This is proved by using Lemma~\ref{L:i0}.
\end{proof}   

\end{comment}

\section{Some sums used in section~\ref{S:RHS} that are \Go-summable} \label{A:RHS_Gsum}
  
Below we list some summands that appear several times in section~\ref{S:RHS}
that are \Go-summable.
\begin{lemma} \label{L:anti1}
  For the summand 
  \[
    \F =
    \binom{s+b+j'} { j'},
  \]
  with $j'$ as the index of summation,
  the \anti is
  \[
    \binom{s+b+j'} {j'-1} .
  \]
\end{lemma}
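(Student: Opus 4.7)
The plan is to verify the claimed antidifference directly rather than re-running Gosper's algorithm from scratch, since in this case the candidate $z_{j'} = \binom{s+b+j'}{j'-1}$ is explicitly provided. Thus I only need to check that $z_{j'+1} - z_{j'} = \F(j')$, which reduces to a single application of Pascal's rule.

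First I would write out
\[
  z_{j'+1} - z_{j'} = \binom{s+b+j'+1}{j'} - \binom{s+b+j'}{j'-1}.
\]
Applying Pascal's identity
\[
  \binom{s+b+j'+1}{j'} = \binom{s+b+j'}{j'} + \binom{s+b+j'}{j'-1}
\]
to the first binomial, the two $\binom{s+b+j'}{j'-1}$ terms cancel, leaving
\[
  z_{j'+1} - z_{j'} = \binom{s+b+j'}{j'} = \F(j'),
\]
which is exactly the defining relation of a Gosper antidifference. Hence $\F$ is Gosper-summable with the stated antidifference.

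The only mild subtlety, which I would mention for completeness, is the interpretation at the lower boundary $j'=0$: the symbol $\binom{s+b}{-1}$ is $0$ under the standard convention, so $z_0 = 0$ and the telescoping formula reproduces the partial sum $\sum_{j'=0}^{N} \F(j') = \binom{s+b+N+1}{N}$ that one would obtain from the classical hockey-stick identity. There is no genuine obstacle here; the lemma is essentially a repackaging of Pascal's rule in the telescoping form needed later in Section~\ref{S:RHS}, and the proof is no more than two lines.
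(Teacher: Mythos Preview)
Your verification is correct: the paper states this lemma without proof (it is simply listed as a Gosper-summable summand in Appendix~\ref{A:RHS_Gsum}), and your one-line check via Pascal's rule is exactly the intended justification. There is nothing to add.
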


\begin{lemma} \label{L:anti2}
  For the summand
  \[
    \F =
    (-1)^{j'}
    \binom{s}{j'} ,
  \]
  with $j'$ as the index of summation,
  the \anti is
  \[
    (-1)^{j'} \binom{s-1}{j'-1} .
  \]
\end{lemma}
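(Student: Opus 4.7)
The plan is to prove Lemma~\ref{L:anti2} by direct verification: I would simply check that the proposed closed form $W(j') = (-1)^{j'}\binom{s-1}{j'-1}$ obeys the defining relation of a Gosper antidifference from Appendix~\ref{S:A_GZ}, namely $W(j'+1) - W(j') = \F(j')$ with $\F(j') = (-1)^{j'}\binom{s}{j'}$. There is no need to actually run Gosper's algorithm, since once a candidate is in hand, verification is a one-line algebraic identity.

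First I would write out the difference explicitly,
\[
  W(j'+1) - W(j')
  = (-1)^{j'+1}\binom{s-1}{j'} - (-1)^{j'}\binom{s-1}{j'-1},
\]
then factor out the common alternating sign. The key step is then a single application of Pascal's rule $\binom{s-1}{j'} + \binom{s-1}{j'-1} = \binom{s}{j'}$, which collapses the two terms into the desired hypergeometric summand (modulo tracking the overall sign). The standard edge conventions $\binom{s-1}{-1} = 0$ and $\binom{s-1}{s-1} = 1$ handle the boundary cases $j' = 0$ and $j' = s$ automatically, so no separate case analysis is needed.

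There is no serious obstacle here; the whole lemma is a bookkeeping exercise meant to record a ready-made antidifference used repeatedly in section~\ref{S:RHS}, in particular in the proof of Lemma~\ref{L:RHS_P1_sum_coe}. The only care required is sign tracking when pulling the $(-1)^{j'}$ outside the difference and matching it against the convention $z_{n+1} - z_n = t_n$ adopted in Appendix~\ref{S:A_GZ}; should a sign mismatch appear on verification, the remedy is simply to replace $W(j')$ by its negative, since Pascal's rule is what drives the identity either way. I would present the proof as a two-line computation ending with an explicit invocation of Pascal's identity.
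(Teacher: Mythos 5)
Your verification-by-Pascal's-rule argument is correct and is the natural (indeed, essentially the only) proof; the paper states this lemma without any proof at all, so there is nothing to compare against beyond noting that your one-line check is exactly what is needed. One point worth making explicit: the sign mismatch you flag as a contingency actually occurs. With the paper's convention $z_{n+1}-z_n=t_n$ from Appendix~\ref{S:A_GZ}, the stated candidate $W(j')=(-1)^{j'}\binom{s-1}{j'-1}$ gives
\[
  W(j'+1)-W(j') = (-1)^{j'+1}\left[\binom{s-1}{j'}+\binom{s-1}{j'-1}\right] = -(-1)^{j'}\binom{s}{j'} = -\F,
\]
so the antidifference should be $(-1)^{j'+1}\binom{s-1}{j'-1}$. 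Your proposed remedy (negate $W$) is the right one, and it is corroborated by the downstream use in Lemma~\ref{L:RHS_P1_sum_coe}: telescoping with the negated antidifference from $j'=0$ to $i$ yields $(-1)^{i}\binom{s-1}{i}$, matching Eq.~\eqref{E:RHS_P1_sum_of_col}, whereas the antidifference as printed would give the opposite sign.
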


\bibliographystyle{jabbrv_abbrv} % here load jabbrv.bst
%\bibliography{../../kmer_full_jname_2023_09_17.bib}
%\bibliography{kmer_full_jname_2023_09_17.bib}

\end{document}